\newtheorem{theorem}{Theorem}[section]
\newenvironment{proof}{\textbf{Proof:} }{\text{q.e.d.}}
\newtheorem{assumption}[theorem]{Assumption}
\newtheorem{proposition}[theorem]{Proposition}
{\theorembodyfont{\rmfamily}
\newtheorem{notation}[theorem]{Notation}

\newtheorem{definition}[theorem]{Definition}

\newtheorem{lemma}[theorem]{Lemma}
\newtheorem{corollary}[theorem]{Corollary}
\newtheorem{remark}[theorem]{Remark}

\newtheorem{example}[theorem]{Example}}
\newcommand{\ncmd}{\newcommand}
\ncmd{\be}{\begin{enumerate}}
\ncmd{\ee}{\end{enumerate}}
\ncmd{\bi}{\begin{itemize}}
\ncmd{\ei}{\end{itemize}}
\ncmd{\beq}{\begin{equation}}
\ncmd{\eeq}{\end{equation}}
\ncmd{\bdfn}{\begin{definition}}
\ncmd{\edfn}{\end{definition}}
\ncmd{\bprop}{\begin{proposition}}
\ncmd{\eprop}{\end{proposition}}
\ncmd{\bthm}{\begin{theorem}}
\ncmd{\ethm}{\end{theorem}}
\ncmd{\brem}{\begin{remark}}
\ncmd{\erem}{\end{remark}}
\ncmd{\bcor}{\begin{corollary}}
\ncmd{\ecor}{\end{corollary}}
\ncmd{\bex}{\begin{example}}
\ncmd{\eex}{\end{example}}
\ncmd{\bnot}{\begin{notation}}
\ncmd{\enot}{\end{notation}}
\ncmd{\bass}{\begin{assumption}}
\ncmd{\eass}{\end{assumption}}
\ncmd{\blem}{\begin{lemma}}
\ncmd{\elem}{\end{lemma}}
\ncmd{\bproof}{\begin{proof}}
\ncmd{\eproof}{\end{proof}}
\ncmd{\idxS}[1]{\sindex[indexsymb]{#1}}
\ncmd{\idxD}[1]{\sindex[indexdef]{#1}}
\ncmd{\lra}{\mbox{$\longrightarrow$}}
\ncmd{\Lra}{\mbox{$\Longrightarrow$}}
\ncmd{\ura}{\underrightarrow}
\ncmd{\ua}{\mbox{$\uparrow$}}
\ncmd{\da}{\mbox{$\downarrow$}}
\ncmd{\hra}{\mbox{$\hookrightarrow$}}
\ncmd{\ra}{\mbox{$\rightarrow$}}
\ncmd{\Ra}{\mbox{$\Rightarrow$}}
\ncmd{\eqa}{\mbox{$\leftrightarrow$}}
\ncmd{\Eqa}{\mbox{$\Leftrightarrow$}}
\ncmd{\leqa}{\mbox{$\longleftrightarrow$}}
\ncmd{\Leqa}{\mbox{$\Longleftrightarrow$}}
\ncmd{\iso}{\mbox{$\tilde{\ra}$}}
\ncmd{\liso}{\mbox{$\tilde{\lra}$}}%\ncmd{\liso}{\mbox{$\stackrel{\sim}{\rigtharrow}$}}
\ncmd{\Reinfty}{\mbox{$\mathbb{R}\cup\{\infty\}$}}
\ncmd{\bbN}{\mathbb{N}}
\ncmd{\bbZ}{\mbox{$\mathbb{Z}$}}
\ncmd{\bbR}{\mbox{$\mathbb{R}$}}
\ncmd{\ggT}{\operatorname{ggT}}
\ncmd{\Gcd}[2]{\mathop{\text{gcd}(#1,#2)}}
\ncmd{\mf}[1]{\mathfrak{#1}}
\ncmd{\calli}[1]{\mathcal{#1}}
\ncmd{\Rcal}{\calli{R}}
\ncmd{\Scal}{\calli{S}}
\ncmd{\gfrak}{\mf{g}}
\ncmd{\hfrak}{\mf{h}}
\ncmd{\ti}[1]{\tilde{#1}}
   \ncmd{\tiV}{\ti{V}}\ncmd{\tiD}{\ti{D}}\ncmd{\tisigma}{\ti{\sigma}}
   \ncmd{\tij}{\ti{j}}\ncmd{\tii}{\ti{i}}
   \ncmd{\titfh}{\ti{\mf{h}}}\ncmd{\titfgt}{\ti{\mf{g}}}
   \ncmd{\tiJ}{\ti{J}}\ncmd{\tiJplus}{\ti{J}^+}\ncmd{\tiG}{\ti{G}}
   \ncmd{\tibG}{\mathbf{\ti{G}}} \ncmd{\tibH}{\mathbf{\ti{H}}}
\ncmd{\leftexp}[2]{{\vphantom{#2}}^{#1}{#2}}
\ncmd{\graded}[2]{\mbox{$\mf{gr}_{#1}(#2):=\oplus_i({#1}^i{#2}/{#1}^{i+1}{#2})$}}
\ncmd{\umin}[1]{\underset{#1}{\min}}
\ncmd{\uinf}[1]{\underset{#1}{\inf}}
\ncmd{\usup}[1]{\underset{#1}{\sup}}
\ncmd{\umax}[1]{\underset{#1}{\max}}
\ncmd{\uomin}[2]{\underset{#1}{\overset{#2}{\min}}}
\ncmd{\uoinf}[2]{\underset{#1}{\overset{#2}{\inf}}}
\ncmd{\uosup}[2]{\underset{#1}{\overset{#2}{\sup}}}
\ncmd{\uomax}[2]{\underset{#1}{\overset{#2}{\max}}}
\ncmd{\sequence}[1]{\mbox{$({#1}_n)_\mathbb{N}$}}
\ncmd{\pF}{\mf{p}_F}
\ncmd{\oK}{\mbox{$o_K$}}
\ncmd{\pK}{\mbox{$\mf{p}_K$}}
\ncmd{\pD}{\mbox{$\mf{p}_D$}}
\ncmd{\pFpow}[1]{\mbox{$\mf{p}_F^{#1}$}}
\ncmd{\pDpow}[1]{\mbox{$\mf{p}_D^{#1}$}}
\ncmd{\oDelta}{\mbox{$o_{\Delta}$}}
\ncmd{\pDelta}{\mbox{$\mf{p}_{\Delta}$}}
\ncmd{\vr}[1]{\mbox{$o_{#1}$}}
\ncmd{\vi}[1]{\mbox{$\mf{p}_{#1}$}}
\ncmd{\vipower}[2]{\mbox{$\mf{p}_{#1}^{#2}$}}
\ncmd{\Char}{\operatorname{char}}
\ncmd{\Building}{\mf{B}^1}
\ncmd{\building}{\mf{B}}
\ncmd{\Ical}{\mathcal{I}}
\ncmd{\IAE}{\mbox{$\mathcal{I}^{E^{\times}}$}}
\ncmd{\IB}{\mbox{$\mathcal{I}_E$}}
\ncmd{\Om}[1]{\mbox{$\Omega_{#1}$}}
\ncmd{\OmE}[1]{\mbox{$\Omega_{E,{#1}}$}}
\ncmd{\bary}{\mathop{bary}}
\ncmd{\typ}{\operatorname{typ}}
\ncmd{\Stern}{\operatorname{Stern}}
\ncmd{\Fix}{\operatorname{Fix}}
\ncmd{\diam}{\operatorname{diam}}
\ncmd{\Norm}[2]{\operatorname{Norm}_{#1}(#2)}
\ncmd{\Normone}[2]{\operatorname{Norm}^1_{#1}(#2)}
\ncmd{\Normtwo}[2]{\operatorname{Norm}^2_{#1}(#2)}
\ncmd{\ord}{\oparatorname{ord}}
\ncmd{\Ord}{\operatorname{Ord}}
\ncmd{\lattices}[1]{\operatorname{lattices}({#1})}
\ncmd{\rad}[1]{\operatorname{rad}({#1})}
\ncmd{\Her}[1]{\operatorname{Her}({#1})}
\ncmd{\Latt}[2]{\operatorname{Latt}_{#1}(#2)}
\ncmd{\Gitter}[2]{\operatorname{Latt}(#2,#1)}
\ncmd{\LC}{\operatorname{LC}}
\ncmd{\Lattone}[2]{\operatorname{Latt}^1_{#1}(#2)}
\ncmd{\Latttwo}[2]{\operatorname{Latt}^2_{#1}(#2)}
\ncmd{\Inn}{\operatorname{Inn}}
\ncmd{\Centr}{\operatorname{Z}}%\ncmd{\Centr}[2]{\mathop{\text{C}_{#1}({#2})}}
\ncmd{\Z}{\operatorname{Z}}
\ncmd{\lmult}{\operatorname{l}}
\ncmd{\rmult}{\operatorname{r}}
\ncmd{\ind}{\operatorname{ind}}
\ncmd{\Gal}{\mathop{\text{Gal}}}
\ncmd{\stab}{\mathop{\text{stab}}}
\ncmd{\LietiG}{\mbox{$\mathop{Lie}(\tilde{G})$}}
\ncmd{\LF}{\operatorname{LF}}%Liealgebrafiltration
\ncmd{\gyr}{\mf{g}_{x,r}}
\ncmd{\hxr}{\mf{h}_{x,r}}
\ncmd{\Matr}{\operatorname{M}}%\ncmd{\Matr}[2]{\mathop{\text{M}_{#1}({#2})}}
\ncmd{\trd}{\operatorname{trd}}
\ncmd{\Nrd}{\operatorname{Nrd}}
\ncmd{\Nm}{\operatorname{N}}
\ncmd{\trace}{\operatorname{tr}}
\ncmd{\Gram}{\operatorname{Gram}}
\ncmd{\gr}{\operatorname{\mf{gr}}}
\ncmd{\degree}{\operatorname{deg}}
\ncmd{\Skew}{\operatorname{Skew}}
\ncmd{\Sym}{\operatorname{Sym}}
\ncmd{\diag}{\operatorname{diag}}
\ncmd{\antidiag}{\operatorname{antidiag}}
\ncmd{\EMatr}[1]{\mathds{1}_{#1}}
\ncmd{\Mint}[3]{\mathop{\text{M}_{#1,#2}(#3)}}
\ncmd{\trans}[1]{{#1}^{\ensuremath{\mathsf{T}}}} 
\ncmd{\Lie}{\operatorname{Lie}}
\ncmd{\Abb}{\operatorname{Abb}}
\ncmd{\Top}{\operatorname{Top}}
\ncmd{\Hom}{\operatorname{Hom}}
\ncmd{\End}{\operatorname{End}}
\ncmd{\Aut}{\operatorname{Aut}}%\ncmd{\Aut}[2]{\mathop{\text{Aut}_{#1}(#2)}}
\ncmd{\Iso}{\operatorname{Iso}}
\ncmd{\im}{\operatorname{im}}
\ncmd{\rang}{\operatorname{rang}}
\ncmd{\id}{\operatorname{id}}
\ncmd{\Span}{\operatorname{span}}%\ncmd{\Span}[2]{\mathop{\text{span}_{#1}({#2})}}
\ncmd{\tens}[3]{{#1}\otimes_{#2}{#3}}
\ncmd{\proj}{\operatorname{proj}}
\ncmd{\PO}{\operatorname{PO}}
\ncmd{\SL}{\text{SL}}
\ncmd{\GL}{\text{GL}}
\ncmd{\Sp}{\operatorname{Sp}}
\ncmd{\SO}{\operatorname{SO}}
\ncmd{\Ogp}{\operatorname{O}}
\ncmd{\Rad}{\operatorname{Rad}}
\ncmd{\nr}{\operatorname{nr}}
\ncmd{\GLDV}{\mbox{$\GL_DV$}}
\ncmd{\SLDV}{\mbox{$\SL_DV$}}
\ncmd{\U}{\operatorname{U}}
\ncmd{\SU}{\operatorname{SU}}
\ncmd{\AfSp}[2]{\operatorname{\mathbf{A}}^{#1}_{K}}
\ncmd{\Res}{\operatorname{Res}}
\ncmd{\bV}{\operatorname{\mathbf{V}}}
\ncmd{\bA}{\operatorname{\mathbf{A}}}
\ncmd{\bGm}{\operatorname{\mathbf{G}_{\mathbf{m}}}}
\ncmd{\bG}{\operatorname{\mathbf{G}}}
\ncmd{\bH}{\operatorname{\mathbf{H}}}
\ncmd{\bMatr}{\operatorname{\mathbf{M}}}
\ncmd{\bU}{\operatorname{\mathbf{U}}}
\ncmd{\bSU}{\operatorname{\mathbf{SU}}}
\ncmd{\bSL}{\operatorname{\mathbf{SL}}}
\ncmd{\bO}{\operatorname{\mathbf{O}}}
\ncmd{\bSO}{\operatorname{\mathbf{SO}}}
\ncmd{\bSp}{\operatorname{\mathbf{Sp}}}
\ncmd{\bGL}{\operatorname{\mathbf{GL}}}
\ncmd{\bT}{\operatorname{\mathbf{T}}}
\ncmd{\bAfSp}[1]{\operatorname{\mathbf{A}^{#1}}}
\ncmd{\Weylgroup}{\mbox{$\leftexp{v}{W}$}}
\ncmd{\Osheaf}[1]{\mbox{$\mf{O}_{#1}$}}
\ncmd{\RingedSpace}[1]{\mbox{$(#1,\Osheaf{#1})$}}
\ncmd{\maximalIdeal}[1]{\mbox{$\mf{m}_{#1}$}}
\ncmd{\Invariante}[2]{\mathop{\text{Iv}({#1},{#2})}}
\ncmd{\pairs}{\mathop{\text{pairs}}}
\ncmd{\zzmatrix}[4]{\left( \begin{array}{cc}#1&#2\\#3&#4 \end{array}\right)}
\ncmd{\Eb}{Euclidean building}
\ncmd{\Gr}{geometric realisation}
\ncmd{\vs}{vector space}
\ncmd{\nd}{non-degenerated}
\ncmd{\na}{non-archimedean}
\ncmd{\fd}{finite dimensional}
\title{Field embeddings which are conjugate under a $p$-adic classical group}
\author{Daniel Skodlerack} 
\begin{document}
\maketitle
%*************************************************Seite 1**********************************************
\begin{abstract}
Let $(V,h)$ be a Hermitian space over a division algebra $D$ which is of index at most two over a non-Archimedean local field $k$ of residue characteristic not 2. Let $G$ be the unitary group defined by $h$ and let $\sigma$ be the adjoint involution. Suppose we are given two $\sigma$-invariant but not $\sigma$-fixed field extensions
$E_1$ and $E_2$ of $k$ in $\End_D(V)$ which are isomorphic under conjugation by an element $g$ of $G$ and suppose that there is a point $x$ in the Bruhat--Tits building of $G$ which is fixed by $E_1^{\times}$ and $E_2^{\times}$ in the reduced building of $\Aut_D(V)$. Then $E_1$ is conjugate to $E_2$ under an element of the stabilizer of $x$ in $G$ if $E_1$ and $E_2$ are conjugate under an element of the stabilizer of $x$ in $\Aut_D(V)$ and a weak extra condition. In addition in many cases the conjugation by $g$ from $E_1$ to $E_2$ can be realized as conjugation by an element of the stabilizer of $x$ in $G$. Further we give a concrete description of the canonical isomorphism from the set of $E_1$-times fixed points of the building of $G$ onto the building of the
centralizer of $E_1$ in $G$.  
\end{abstract}

\section{Introduction}
This article is about a Skolem--Noether kind of lemma in the framework of $p$-adic classical groups in the case of odd residue characteristic. For general linear groups these kinds of lemmas encode the invariants for analyzing the rigidity of certain irreducible representations on open compact subgroups of a $p$-adic group. These representations, called simple types, are used to construct and classify supercuspidal representations of the group of interest. The rigidity question for two simple types asks how they are related if they are contained in the same supercuspidal representation. For example the work of Broussous and Grabitz \cite{broussousGrabitz:00} is used in the above classification question for $\GL_m(D)$. The latter is mainly done by Secherre, Stevens and Broussous in the framework of Bushnell-Kutzko theory. In this article we consider $p$-adic classical groups. Stevens constructed all supercuspidal representations for the case where $D$ is a field \cite{stevens:08}. To understand the rigidity question for the case where $D$ has index two we need a new Skolem--Noether lemma, i.e. a classical version of part one of \cite{broussousGrabitz:00}.

To understand where the Skolem--Noether like proposition is involved let us give more details. 
Let $H$ be a $p$-adic group of the kind mentioned above. A simple type itself is constructed by a combinatorial algebraic object, called a simple stratum, which especially consists of a facet of the Bruhat--Tits building $\building(H)$, represented by a certain hereditary order $\mf{a}$ in an Azumaya algebra $A$ over a non-Archimedean local field $k,$ and a field extension $E|k$ in $A$ such that $E^{\times}$ normalizes $\mf{a}$. Note that $A$ always is the endomorphism ring of the vector space on which $H$ is defined. We call such a pair $(E,\mf{a})$ an embedding.
An approach to the rigidity question for simple types is to classify the $\mf{n}(\mf{a})\cap H$-conjugation classes of embeddings with same hereditary order, where $\mf{n}(\mf{a})$ is the normalizer of $\mf{a}$ in $A^{\times}$. In 
\cite{broussousGrabitz:00} the authors described these classes for the case that $H$ is  $A^{\times}$, i.e. some $\GL_m(D)$, using an equivalence relation on the set of embeddings which has the property that every equivalence class contains an embedding whose field is isomorphic to an unramified extension of $k$ in $D$. Moreover from their article it is easy to deduce the following proposition.

\begin{proposition}
If two embeddings $(E_i,\mf{a})$ of $A$ with $k$-algebra isomorphic fields and the same hereditary order $\mf{a}$ are equivalent then any $k$-algebra isomorphism between the two fields can be realized as conjugation by an element of $\mf{n}(\mf{a})$.
\end{proposition}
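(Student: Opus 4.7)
The plan is to combine the classical Skolem--Noether theorem for the central simple $k$-algebra $A$ with the Broussous--Grabitz analysis of equivalence classes, using the centralizer of $E_2$ in $A^{\times}$ to correct an arbitrary Skolem--Noether element into one that normalizes $\mf{a}$. Skolem--Noether alone produces some $g\in A^{\times}$ realizing the given $k$-algebra isomorphism $\varphi\colon E_1\to E_2$ as $\varphi=\Inn(g)|_{E_1}$, but $g$ need not normalize $\mf{a}$; since any element of $\Centr_A(E_2)^{\times}$ acts trivially on $\varphi(E_1)=E_2$ and so leaves $\varphi$ unchanged, it suffices to show that $g$ can be multiplied on the left by such a centralizer element to land in $\mf{n}(\mf{a})$.

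To carry this out, transport $(E_1,\mf{a})$ by $g$ to the embedding $(E_2,g\mf{a} g^{-1})$. Because the equivalence relation is $A^{\times}$-invariant and $(E_1,\mf{a})$ is assumed equivalent to $(E_2,\mf{a})$, the two embeddings $(E_2,\mf{a})$ and $(E_2,g\mf{a} g^{-1})$ lie in the same equivalence class. The key input from \cite{broussousGrabitz:00} is that two equivalent embeddings sharing a common field $E$ have their hereditary orders conjugate by an element of $\Centr_A(E)^{\times}$; applied to $E_2$ this yields $c\in\Centr_A(E_2)^{\times}$ with $c\mf{a} c^{-1}=g\mf{a} g^{-1}$. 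Setting $n:=c^{-1}g$, one checks directly that $n\mf{a} n^{-1}=\mf{a}$, hence $n\in\mf{n}(\mf{a})$, and that $\Inn(n)$ agrees with $\varphi$ on $E_1$, because $c$ centralizes $\varphi(E_1)=E_2$.

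The main obstacle, and the step that imports real content rather than bookkeeping, is the centralizer characterisation of equivalence among embeddings with a common field. This uses the full strength of the Broussous--Grabitz equivalence theory (not merely the existence of a representative whose field is unramified over $k$) and relies on the structure of the simplicial complex of hereditary orders normalized by $E^{\times}$ together with the transitivity of $\Centr_A(E)^{\times}$ on the equivalence class. Once that lemma is in hand, the Skolem--Noether step and the centralizer correction combine as above to produce the desired $n\in\mf{n}(\mf{a})$ realizing $\varphi$.
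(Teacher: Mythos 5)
Your overall scheme (Skolem--Noether to produce some $g\in\tilde{G}$ realizing $\varphi$, followed by a correction step to land in $\mf{n}(\mf{a})$) matches the paper's, and the concluding algebra is right: with $c$ centralizing $E_2$ and $c\mf{a}c^{-1}=g\mf{a}g^{-1}$, the element $n=c^{-1}g$ does lie in $\mf{n}(\mf{a})$ and $\Inn(n)|_{E_1}=\varphi$. The problem is the correction step itself. The ``key input'' you invoke --- that two equivalent embeddings $(E,\mf{a})$, $(E,\mf{a}')$ with a \emph{common} field have $\mf{a}$ and $\mf{a}'$ conjugate under $\Centr_A(E)^{\times}$ --- is not a black box from \cite{broussousGrabitz:00}; it is precisely part~(2) of the paper's Proposition~\ref{propBroussousGrabitzNoetherThm}, and the paper derives part~(2) \emph{from} part~(1), which is exactly the statement you are asked to prove. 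As written your argument is therefore circular, and the remarks about ``the full strength of the Broussous--Grabitz equivalence theory'' and ``transitivity of $\Centr_A(E)^{\times}$ on the equivalence class'' describe the target, not an available input.

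The paper's actual correction mechanism is different and more elementary: observe that $E_1|k$ is a primitive extension (a tower of an unramified and a totally ramified extension, both primitive and the first separable), fix a generator $\alpha$ of $E_1$, and apply \cite[3.2]{broussousGrabitz:00} --- a statement about correcting a Skolem--Noether element \emph{on a single generator}, not about conjugacy of hereditary orders --- to produce $g'\in\mf{n}(\mf{a})$ with $g'\alpha g'^{-1}=g\alpha g^{-1}$. Since $\alpha$ generates $E_1$, this $g'$ realizes $\varphi$, and the centralizer conjugacy you used is then obtained afterwards as a corollary, not before. To repair your proof you would either switch to the generator-level result from \cite{broussousGrabitz:00}, or supply an independent proof of the centralizer conjugacy of orders that does not pass through part~(1).
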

%****************************************Seite 2******************************************************

This article provides an analogous result for $p$-adic classical groups. Here we consider a $p$-adic unitary group $G$ defined by a signed Hermitian space $(V,h)$ over a central division algebra of finite index $d$ over a non-Archimedean local field $k$ of odd residue characteristic. This implies $d=1$ or $2$. Let $A$ be $\End_D(V)$. We consider embeddings $(E,\mf{a})$ which are invariant under the adjoint involution $\sigma$ of $h$, i.e. $\sigma(E)$ is equal to $E$ and $\sigma(\mf{a})$ is equal to $\mf{a}$. 
The main result of this note is the following:

\begin{theorem}\label{thmMainclassicalEmbeddings}
Suppose we are given two $\sigma$-invariant embeddings $(E_i,\mf{a})$ of $A$ and an element $g$ of $G$ such that $gE_1g^{-1}$ is equal to $E_2$.
\be 
\item[(i)] Assume that
\bi
\item[(a)] $\Gcd{f(E_1|k)}{d}|e(E_1|k)$ or
\item[(b)] $\frac{\dim_D(V)\Gcd{[E_1:k]}{d}}{[E_1:k]}$ is odd and both embeddings are equivalent.
\ei
Then there is a $g_1\in G\cap\mf{a}$ such that $g_1eg_1^{-1}$ is equal to $geg^{-1}$, for all $e\in E_1$. 
\item[(ii)]
In all other cases than in (a) and (b) if $\sigma|_{E_1}\neq \id_{E_1}$ and the non-trivial Galois element of the unramified field extension of degree two in $E_1|k$ can be extended to an automorphism of $E_1|k$ and both embeddings $(E_i,\mf{a}),\ i=1,2,$ are equivalent then there is an element $g_1$ of $G\cap\mf{a}$ such that $E_1^{g_1}$ is equal to $E_2$.   
\ee
\end{theorem}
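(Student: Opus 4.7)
The plan is to bootstrap from the general-linear case and then descend through the centralizer of $E_1$. First, by the above proposition applied inside $A^\times$, produce an element $h \in \mf{n}(\mf{a})$ realising the specific isomorphism $e \mapsto geg^{-1}$; under the equivalence hypotheses of (i)(b) and (ii) one may further arrange $h \in \mf{a}^\times$, while in case (i)(a) the divisibility $\Gcd{f(E_1|k)}{d}\mid e(E_1|k)$ together with the existence of $g$ in $A^\times$ pins down enough invariants of the two embeddings to force equivalence. Setting $B_1 := \Z_A(E_1)$ and writing $g = hb$ with $b \in B_1^\times$, part (i) reduces to finding $u \in B_1^\times$ with $g_1 := hu \in G \cap \mf{a}$ realising the same conjugation on $E_1$, and part (ii) to the weaker version in which we are also allowed to precompose with any $\phi \in \Aut(E_1|k)$.

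The restriction of $\sigma$ defines an involution $\tau$ on $B_1$, and the intersection $\mf{b} := \mf{a} \cap B_1$ is a $\tau$-stable hereditary order with $\mf{b}^\times = \mf{a}^\times \cap B_1^\times$. Spelling out $\sigma(g_1)g_1 = 1$ and using $\sigma(g)g = 1$ to eliminate $b$, the problem collapses to a Hermitian norm equation in the central simple $E_1$-algebra $B_1$ for the involution $\tau$, with the unknown constrained to $\mf{b}^\times$. Geometrically this is an assertion about conjugacy inside the Bruhat--Tits building of the unitary group $\U(B_1,\tau)$, which embeds as the $E_1^\times$-fixed subset of $\building(G)$ via the canonical identification announced in the abstract.

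The remaining work is an arithmetic analysis of the cokernel of the norm map $u \mapsto \tau(u)u$ on $\mf{b}^\times$, or equivalently the Tate cohomology $\hat H^0\bigl(\Gal(E_1/E_1^\tau),\mf{b}^\times\bigr)$. In case (i)(a) the divisibility condition makes the reduced norm from $\mf{b}^\times$ onto its residue algebra surjective enough to annihilate the class we must hit. In case (i)(b) the parity hypothesis on $\dim_D(V)\Gcd{[E_1:k]}{d}/[E_1:k]$ forces a suitable orthogonal decomposition of the induced Hermitian structure to contain an odd-dimensional anisotropic summand, supplying the missing norm. For part (ii) one does not try to solve the equation on the nose: precomposing the target isomorphism with an automorphism $\phi$ of $E_1|k$ extending the non-trivial Galois element of the maximal unramified degree-two subextension multiplies the obstruction class by exactly the element generating the cokernel, so that the modified equation is solvable by the argument of (i).

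The principal obstacle is this final step: identifying the cokernel of the Hermitian norm map on $\mf{b}^\times$ explicitly in terms of $e(E_1|k)$, $f(E_1|k)$, $d,$ and the sign of $h$, and verifying that each of the conditions (a), (b), and the extension hypothesis in (ii) precisely kills it. A secondary, more bookkeeping-heavy difficulty is ensuring that the element one constructs lies in $\mf{a}^\times$ rather than merely in $\mf{n}(\mf{a})$; this needs a careful accounting of valuations along the lattice chain determined by $x$.
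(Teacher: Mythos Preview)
Your reduction to the centralizer $B_1=\Z_A(E_1)$ is sound and is, in disguise, exactly what the paper does: writing $g_1=gw$ with $w\in G_{E_1}$, the condition $g_1\in\mf{a}^\times$ becomes $w.x=g^{-1}.x$, so the whole question is whether $g^{-1}.x$ lies in the $G_{E_1}$-orbit of $x$ inside $\building(G)\cap\building_{red}(\tilde{G})^{E_1^\times}$. The paper attacks this directly and geometrically: Proposition~\ref{propSimplicialTypeInBGandBredtildeG} says two points of $\building(G_{E_1})$ are $G_{E_1}$-conjugate iff they have the same simplicial type in $\building_{red}(\tilde{G}_{E_1})$, and Proposition~\ref{propTyppreserving} (case (a)) together with Lemma~\ref{lemTypePreservingembeddingType} (case (b)) show that $j_{E_1}$ preserves type under exactly the numerical hypotheses you list. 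No cohomology appears.

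Your cohomological rephrasing, on the other hand, does not typecheck. The symbol $\hat H^0\bigl(\Gal(E_1/E_1^\tau),\mf{b}^\times\bigr)$ is not meaningful here: $\tau=\sigma|_{B_1}$ is an \emph{anti}-automorphism of a non-commutative ring, so $\mf{b}^\times$ carries no $\Gal(E_1/E_1^\tau)$-module structure and $u\mapsto\tau(u)u$ is not a group homomorphism whose cokernel one can form. What you are really after is the set of $\tau$-Hermitian congruence classes over $\mf{b}$ that become trivial over $B_1$, and the clean way to access that set is precisely the building of $G_{E_1}$, which is how the paper proceeds. Your subsequent claims (``reduced norm\ldots surjective enough'', ``odd-dimensional anisotropic summand supplying the missing norm'', ``multiplies the obstruction class by the generator of the cokernel'') are slogans rather than arguments; once made precise they reassemble into the paper's Propositions~\ref{propTyppreserving} and~\ref{propSimplicialTypeInBGandBredtildeG}.

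Part (ii) in particular is not a cohomological twist of part (i). The paper has to construct two auxiliary Hermitian $E$-spaces $(W,h_E)$ and $(W\pi_D,h'_E)$, prove they are isometric under the stated hypotheses (Proposition~\ref{propIsomorphicCentralizerSpaces}), and then lift an isometry between them to an element of $G$ normalizing $E_1$ (Corollary~\ref{corNormalizer}); the extension property enters through Lemma~\ref{lemSpecialLambda}, which produces a $\sigma''$ commuting with $\sigma'$ and leaving $\lambda$ invariant so that the lifted map is genuinely $h$-unitary. Your sketch does not see this structure, and the ``precompose with $\phi$'' step you propose would not by itself produce an element of $G$.
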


The condition in (a) is not really strong, i.e. if $d=1$  there is no condition and if $d=2$ it says that $e(E_1|k)$ is even or $f(E_1|k)$ is odd. 
The extension condition in (ii) is necessary, see Proposition \ref{propExtCondNecessary}. For a study when it holds see section \ref{secExtensionProperty}. 

We decided to write a geometric proof for better readability. The fact that $E_1$ is conjugate to $E_2$ under $G$ is used to move the situation to the case that $E_1=E_2=E$ but with probably two different hereditary orders $\mf{a}$ and $\mf{a}'$. The latter are two facets in the Bruhat--Tits building $\building(G)$ stabilized by $E^\times$.  
We restrict the Broussous--Lemaire map (\cite[II.1.1]{broussousLemaire:02}) of reduced buildings
\[j_E:\ \building_{red}(\GL_D(V))^{E^\times}\ra\building_{red}((\GL_D(V))_{E})\]
to $\building(G)\cap\building_{red}(\GL_D(V))^{E^\times}$, which is mapped onto $\building(G_E)$. Here $(*)_{E}$ denotes the centralizer of $E$ in $(*)$.
In many cases it can be shown that $j_E$ maps the barycenters of $\mf{a}$ and $\mf{a}'$ to points of the same simplicial type in $\building(G_E)$.
Most arguments can be done on the level of $\GL_D(V)$. 

What remains then is the proof of part (ii). This step uses the construction of signed Hermitian modules. More precisely given a $k$-algebra homomorphism $\phi$ from a field extension $E|k$ to $A$ such that $\phi(E)$ is invariant under $\sigma$, then via pullback $\sigma$ defines an involution $\sigma'$ on $E$ and the vector space $V$ is a right $\tens{E}{k}{D}$-module. There is a procedure to construct a signed Hermitian form 
\[h^{\phi}:\ V\times V\ra \tens{E}{k}{D}\] 
on the module $V$, analogous to that used in \cite{broussousStevens:09}. For example $h^{\id_{E}}$ is used for a concrete description of the map $j_E|_{\building(G)\cap\building_{red}(\GL_D(V))^{E^\times}}$, see the appendix. In case of application of Theorem \ref{thmMainclassicalEmbeddings} the condition that $E_1$ is conjugate to $E_2$ under $G$ can be difficult to verify. For that the construction of the signed Hermitian modules can be very helpful as follows.

\begin{proposition}
 Let $\phi_1$ and $\phi_2$ be two $\sigma'$-$\sigma$-equivariant $k$-algebra homomorphisms from $E|k$ into $A$. Then, $\phi_1\circ\phi_2^{-1}$ can be described as a conjugation under $G$ if and only if $(V,h^{\phi_1})$ is isomorphic to $(V,h^{\phi_2})$ as signed Hermitian modules.
\end{proposition}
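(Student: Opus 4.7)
\bigskip
\noindent\textbf{Proof proposal.}

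The plan is to show both directions by reducing the statement to compatibility between $h$ and the refined forms $h^{\phi_i}$. I would first unpack what it means for $f\in\GL_D(V)$ to be an isomorphism of signed Hermitian modules $(V,h^{\phi_1})\to(V,h^{\phi_2})$: by definition it is $D$-linear, intertwines the two right $\tens{E}{k}{D}$-module structures (so $f\phi_1(e)=\phi_2(e)f$ for all $e\in E$, equivalently $f\phi_1(e)f^{-1}=\phi_2(e)$), and satisfies $h^{\phi_2}(fv,fw)=h^{\phi_1}(v,w)$.

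For the forward direction, suppose $g\in G$ with $g\phi_1(e)g^{-1}=\phi_2(e)$ for all $e\in E$. Then $g$ is $D$-linear and intertwines the two $E$-actions, hence it is an isomorphism of right $\tens{E}{k}{D}$-modules from $(V,\phi_1)$ to $(V,\phi_2)$. It remains to show that it transports $h^{\phi_1}$ to $h^{\phi_2}$. For this I would go back to the definition of the construction $\phi\mapsto h^{\phi}$ (modelled on \cite{broussousStevens:09}) and check naturality: if $\psi:(V,\phi_1)\to(V,\phi_2)$ is any $\tens{E}{k}{D}$-isomorphism such that $h(\psi v,\psi w)=h(v,w)$, then $h^{\phi_2}(\psi v,\psi w)=h^{\phi_1}(v,w)$. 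This is the standard kind of equivariance built into trace-pairing constructions and should reduce to a direct computation using the defining equation of $h^{\phi}$.

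For the backward direction, assume $f\in\GL_D(V)$ realises an isomorphism $(V,h^{\phi_1})\to(V,h^{\phi_2})$. As observed, the intertwining relation gives $f\phi_1(e)f^{-1}=\phi_2(e)$, so we are done as soon as we can show that $f$ actually lies in $G$, i.e. $h(fv,fw)=h(v,w)$. Here the key ingredient is the compatibility identity expressing $h$ in terms of $h^{\phi_i}$: in the standard construction one has a relation of the form $h(v,w)=\tau(h^{\phi_i}(v,w))$, where $\tau$ is a fixed $k$-linear map $\tens{E}{k}{D}\to D$ (a twisted reduced trace) that does not depend on $\phi_i$ but only on the ambient data. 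Once this identity is verified, preservation of $h^{\phi_1}$ under $f$ forces preservation of $h$, placing $f$ in $G$.

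The main obstacle is the backward direction, and more precisely the clean formulation of the compatibility identity $h=\tau\circ h^{\phi_i}$: this identity is what makes the statement true, and one must check that the map $\tau$ is intrinsic, i.e. independent of the embedding $\phi_i$ chosen. Once that point is pinned down, both directions become short. The rest of the argument is straightforward bookkeeping between the two module structures on $V$ and the two forms.
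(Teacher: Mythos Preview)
Your argument is correct and is exactly the approach implicit in the paper, which simply declares this part ``trivial and left to the reader.'' The compatibility map you call $\tau$ is the paper's $\tilde{\lambda}$, defined from a fixed $k$-linear $\lambda:E\to k$ by $\tilde{\lambda}(x\otimes y)=\lambda(x)y$; it visibly does not depend on the embedding $\phi_i$, so your ``main obstacle'' dissolves immediately, and the backward direction is then one line via $h=\tilde{\lambda}\circ h^{\phi_i}$. For the forward direction, rather than a naturality computation you can invoke the \emph{uniqueness} clause in the construction of $h^{\phi}$: the form $(v,w)\mapsto h^{\phi_2}(gv,gw)$ satisfies the two defining properties with respect to the $\phi_1$-module structure (using $g\phi_1=\phi_2 g$ and $g\in G$), hence equals $h^{\phi_1}$.
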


%********************************Seite 3 ***********************************************
The signed Hermitian modules can be analyzed using Witt decompositions, and lattice chains give the possibility to consider symmetric or skew-symmetric bilinear forms on residue spaces over finite fields. 

I thank Prof. S. Stevens for giving me the hint to this topic and the DFG for financing my position at University of Muenster for that project.

\section{The building of $\GL_D(V)$ and centralizers}

A good source for the preliminaries in this part is \cite{broussousLemaire:02}. Let $D$ be a skew field of finite index $d$ whose center is a non-Archimedean local field $k$, 
and let $V$ be a finite dimensional right $D$-vector space. we write
$\nu,o_D,\mathfrak{p}_D,\kappa_D$ for the valuation, the valuation ring, the valuation ideal and the residue field of $D$, respectively. We use similar notation for other local fields. The valuation is normalized such that $\nu(k^\times)$ is $\mathbb{Z}$.
We denote $\Aut_D(V)$ by $\tilde{G}$.

In these notes we work with Bruhat--Tits buildings in terms of lattice functions. Let us briefly repeat the basic concept here.
\begin{definition}
 An {\it $o_D$-lattice of $V$} is a finitely generated $o_D$-submodule of $V$ containing a $D$-basis of $V$. An {\it $o_D$-lattice function} is a family $\Lambda=(\Lambda(t))_{t\in \mathbb{R}}$ of $o_D$-lattices of $V$ such that for any real numbers $t<s$ we have
 \begin{enumerate}
  \item $\Lambda(t)\supseteq\Lambda(s)$,
  \item $\Lambda(t)=\bigcap_{r<t}\Lambda(r)$ and
  \item $\Lambda(t)\pi=\Lambda(t+\frac{1}{d})$ for any prime element $\pi$ of $D$.
 \end{enumerate}
The set of all $o_D$-lattice functions is denoted by $\Lattone{o_D}{V}$.
Let $s$ be real number. The {\it translation} of $\Lattone{o_D}{V}$ by $s$ is the map 
\[\Lambda\mapsto \Lambda+s,\ (\Lambda+s)(t):=\Lambda(t-s).\] 
A translation class of an $o_D$-lattice function is denoted by $[\Lambda]$.
A bijective $\subsetneq$-decreasing map from $\bbZ$ to the image of some $o_D$-lattice function $\Lambda$ is called an {\it $o_D$-lattice chain corresponding to $\Lambda$.}
The {\it square lattice function of $\Lambda$} is an $o_k$-lattice function of $\End_D(V)$ defined via
\[\mathfrak{g}_\Lambda(t):=\{a\in\End_D(V)\mid\ a(\Lambda(s))\subseteq\Lambda(s+t)\text{ for all }s\in\mathbb{R}\},\]
which by definition only depends on $[\Lambda]$. The hereditary order $\mathfrak{g}_\Lambda(0)$ only depends on an $o_D$-lattice chain corresponding to $\Lambda$.
In some arguments we need a right-limit of $\Lambda$ in $t$: We define 
\[\Lambda(t+):=\bigcup_{r>t}\Lambda(t).\]
\end{definition}

\begin{theorem}[\cite{broussousLemaire:02}]
There is an affine and $\tilde{G}$-equivariant bijection from $\Lattone{o_D}{V}$ to the Bruhat--Tits building $\building(\tilde{G})$ of $\tilde{G}$. Two such maps only differ by a translation. It induces the unique affine and $\tilde{G}$-equivariant
bijection $f$ from the set of all translation classes of $\Lattone{o_D}{V}$ to the reduced Bruhat--Tits building $\building_{red}(\tilde{G})$ of $\tilde{G}$. The facets of $\building_{red}(\tilde{G})$ correspond to the hereditary orders of $\End_D(V)$, more precisely the point 
$f([\Lambda])$ is a point of the facet $\{f([\Lambda'])|\  \im(\Lambda')=\im(\Lambda)\}$.
\end{theorem}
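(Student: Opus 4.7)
The plan is to construct the bijection explicitly on one apartment and then propagate it over the whole building by $\tilde G$-equivariance, finally checking the axiomatic characterization of the Bruhat--Tits building.

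First, I would fix a $D$-basis $e=(e_1,\dots,e_m)$ of $V$ and let $T_e\subset\tilde G$ be the maximal $k$-split torus of diagonal elements. For $a=(a_1,\dots,a_m)\in\bbR^m$ I would define the ``diagonal'' lattice function
\[
\Lambda_a(t)\ :=\ \bigoplus_{i=1}^m e_i\,\pDpow{\lceil d(t-a_i)\rceil},
\]
and verify the three axioms of a lattice function directly. The assignment $a\mapsto \Lambda_a$ is an injection from $\bbR^m$ into $\Lattone{o_D}{V}$, and the discrete translation invariance $\Lambda_{a+\frac1d(1,\dots,1)}=\Lambda_a\cdot\pi$ (modulo a reindexing) pins down the affine structure. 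I would then identify $\bbR^m$ with $X_*(T_e)\otimes_{\bbZ}\bbR$, the standard apartment of $T_e$ in $\building(\tilde G)$, so that the diagonal lattice functions form an affine copy of this apartment inside $\Lattone{o_D}{V}$.

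Next, I would extend the map to all of $\Lattone{o_D}{V}$ by forcing $\tilde G$-equivariance: for $g\in\tilde G$, put $(g\Lambda)(t):=g(\Lambda(t))$, which is again a lattice function, and send $g\Lambda_a$ to $g$ applied to the point corresponding to $\Lambda_a$. Well-definedness requires showing (i) that every $o_D$-lattice function is conjugate under $\tilde G$ to a diagonal one with respect to some basis (elementary divisor theorem for $o_D$-modules applied levelwise), and (ii) that if two diagonal lattice functions $\Lambda_a$, $\Lambda_{a'}$ differ by an element of the normalizer $N_{\tilde G}(T_e)$, then the affine map is compatible. Point (ii) boils down to computing the action of the extended affine Weyl group of $\tilde G$ on $X_*(T_e)\otimes\bbR$, which by construction coincides with the Weyl-group action on $\bbR^m$ by signed permutations and $\frac1d\bbZ^m$-translations. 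Once these are in place, axioms (A1)--(A5) of a building in the Bruhat--Tits sense are inherited from the apartment level, so by the uniqueness of the Bruhat--Tits building of $\tilde G$ we obtain the claimed bijection $\Lattone{o_D}{V}\iso \building(\tilde G)$. Uniqueness up to translation reduces to the observation that the center of $\tilde G$ acts trivially on $\building_{red}(\tilde G)$ but acts on $\Lattone{o_D}{V}$ through the homothety group, which is exactly the translation group $\Lambda\mapsto\Lambda+s$; hence any two such equivariant bijections differ by a constant shift.

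For the induced statement on the reduced building I would quotient by the translation action: passing to translation classes $[\Lambda]$ kills precisely the central homotheties, and the resulting bijection $f$ onto $\building_{red}(\tilde G)$ is then unique. Finally, to match facets with hereditary orders, I would use that the hereditary order $\mf{g}_\Lambda(0)$ depends only on the finite chain $\im(\Lambda)$, and that two translation classes $[\Lambda]$, $[\Lambda']$ lie in the same facet of $\building_{red}(\tilde G)$ iff their stabilizers in $\tilde G$ share the same associated parahoric iff they produce the same hereditary order; since every hereditary order of $\End_D(V)$ arises from a unique $o_D$-lattice chain, this gives the bijection between facets of $\building_{red}(\tilde G)$ and hereditary orders, with $f([\Lambda])$ lying on the facet characterized by $\{f([\Lambda'])\mid \im(\Lambda')=\im(\Lambda)\}$.

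The main obstacle is step two, the verification of the Bruhat--Tits axioms in the lattice-function picture, in particular proving that any two apartments are $\tilde G$-conjugate and that the affine Weyl group acts as the expected reflection group; the rest is essentially book-keeping once the apartment structure is in place.
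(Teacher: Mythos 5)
This theorem is not proved in the paper at all: it is quoted verbatim from Broussous--Lemaire \cite{broussousLemaire:02}, so there is no in-paper argument to compare against. Your sketch is essentially the standard construction from that reference (and ultimately from Bruhat--Tits): diagonal lattice functions attached to a basis give an affine copy of the standard apartment of $X_*(T_e)\otimes\bbR$, the splitting theorem for lattice chains over $o_D$ shows every lattice function lies in such an apartment, and equivariance plus the affine Weyl group computation propagates the identification. Two points are thinner than the rest but not wrong. First, the phrase ``elementary divisor theorem applied levelwise'' undersells the real content: you need a single $D$-basis splitting the entire chain $\im(\Lambda)$ simultaneously, not a basis for each lattice separately; this is the standard common-splitting-basis lemma for lattice chains over a complete discrete valuation ring. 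Second, your uniqueness-up-to-translation argument via the center only produces translations by $\nu(k^\times)=\bbZ$; to get the full statement you should compose two candidate bijections and argue that the resulting affine $\tilde G$-equivariant automorphism of $\building(\tilde{G})=\building_{red}(\tilde{G})\times\bbR$ is the identity on the reduced factor (equivariance forces preservation of point stabilizers, which separate points there) and hence a constant shift on the $\bbR$-factor. With those two repairs the outline matches the cited proof; the remaining work you correctly identify as the verification of the apartment-system axioms.
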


For a point $x$ of $\building_{red}(\tilde{G})$ we denote by $\mf{a}_x$ the hereditary order 
corresponding to $x$, i.e. $\mf{g}_\Lambda(0)$ if $f([\Lambda])=x$. We identify facets with 
hereditary orders. 
Let $E|k$ be a field extension in $\End_D(V)$. We denote the centralizer of $E$ in a subgroup $H$ of $\tilde{G}$ by $H_E$. 

\begin{theorem}[\cite{broussousLemaire:02},~II.1.1.]
There is a unique map $j_E$ from the set of $E^\times$-fixed points of $\building_{red}(\tilde{G})$, denoted by 
$\building_{red}(\tilde{G})^{E^\times}$, to the reduced building $\building_{red}(\tilde{G}_E)$ of $\tilde{G}_E$ such that
\[\mf{g}_x(t)\cap\End_{\tens{E}{k}{D}}(V)=\mf{g}_{j_E(x)}(t),\]
for all $x\in\building_{red}(\tilde{G})^{E^\times}$ and $t\in\mathbb{R}$.  
The map is
\be 
\item affine,
\item $\tilde{G}_E$-equivariant and
\item bijective. 
\ee
$j_E^{-1}$ is the unique map from $\building_{red}(\tilde{G}_E)$ to $\building_{red}(\tilde{G})$ which satisfies 1. and 2.
\end{theorem}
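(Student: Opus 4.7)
The overall plan is to construct $j_E(x)$ by reinterpreting an $E^\times$-fixed $o_D$-lattice function representing $x$ as a lattice function for the centralizer $\tilde{G}_E$. Writing $\tens{E}{k}{D}$ as $\Matr_r(D')$ for a central division algebra $D'$ over $E$, the right $\tens{E}{k}{D}$-module $V$ is Morita-equivalent to a right $D'$-vector space $V'$ so that $\End_{\tens{E}{k}{D}}(V)\cong\End_{D'}(V')$; the reduced building of $\tilde{G}_E$ is then the space of translation classes of $o_{D'}$-lattice functions on $V'$, to which the preceding theorem applies with $\tilde{G}$ replaced by $\tilde{G}_E$.

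For the existence of $j_E$, fix $x\in\building_{red}(\tilde{G})^{E^\times}$ and pick a representative $\Lambda\in\Lattone{o_D}{V}$ with $f([\Lambda])=x$. For each $e\in E^\times$ the class $[e\Lambda]$ equals $[\Lambda]$, so there is a unique $s(e)\in\mathbb{R}$ with $e\Lambda=\Lambda+s(e)$. The map $s\colon E^\times\to\mathbb{R}$ is a group homomorphism agreeing with $\nu$ on $k^\times$; since $o_E^\times$ is compact while it acts by translations on the discrete set of shifts of $\Lambda$, $s|_{o_E^\times}=0$, and $s$ is the unique extension of $\nu$ to $E$. This scaling identity forces each $\Lambda(t)$ to be $o_E$-stable and, after a rescaling of the index, produces an $o_{D'}$-lattice function $\Lambda'$ on $V'$; I define $j_E(x):=f_E([\Lambda'])$, where $f_E$ is the Broussous--Lemaire bijection associated with $\tilde{G}_E$.

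The intersection formula in the statement now follows by direct unravelling: an element $a\in\End_{\tens{E}{k}{D}}(V)$ commutes with $E$ by assumption, so the condition $a(\Lambda(s))\subseteq\Lambda(s+t)$ for all $s\in\mathbb{R}$ is equivalent, under the Morita identification, to $a(\Lambda'(s))\subseteq\Lambda'(s+t)$ for all $s$. The uniqueness of $j_E$ is immediate from the formula, because the family $(\mf{g}_{j_E(x)}(t))_{t\in\mathbb{R}}$ recovers the translation class $[\Lambda']$ and hence the point $j_E(x)$. Affineness and $\tilde{G}_E$-equivariance are built into the construction; injectivity follows from uniqueness; and surjectivity is obtained by reversing the reinterpretation, since every $o_{D'}$-lattice function on $V'$ pulls back under Morita to an $E^\times$-fixed element of $\Lattone{o_D}{V}$.

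The most delicate piece is the uniqueness of $j_E^{-1}$ as an affine $\tilde{G}_E$-equivariant map $\building_{red}(\tilde{G}_E)\to\building_{red}(\tilde{G})$. The image of any such map lies in $\building_{red}(\tilde{G})^{E^\times}$ because $E^\times\subset\Z(\tilde{G}_E)$ acts trivially on every $\tilde{G}_E$-orbit; then, using the transitivity of $\tilde{G}_E$ on its apartments combined with affineness, the map is determined by its value at a single vertex, and that value must coincide with the one produced by $j_E^{-1}$ by the intersection characterisation. The main technical obstacle I anticipate is the bookkeeping in the step producing $\Lambda'$: the proportionality constant between $s$ and the normalized valuation of $E$ involves both $e(E|k)$ and the index of $D'|E$, and one must verify that this constant exactly matches the $D'$-scaling axiom demanded of a genuine element of $\Lattone{o_{D'}}{V'}$.
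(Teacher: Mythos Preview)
This theorem is not proved in the paper; it is quoted from Broussous--Lemaire, and only the \emph{construction} of $j_E$ is recalled afterwards (Remark~\ref{remCentralizer}(2) and, in more detail, Proposition~\ref{propBLII31} in the appendix). There one exhibits a $\Delta$-subspace $W\subseteq V$ (with $\Delta$ equal to $E$ or to $\tens{E}{k}{D}$, according to the parity of $[E:k]$) such that restriction $a\mapsto a|_W$ identifies $\End_{\tens{E}{k}{D}}(V)$ with $\End_\Delta(W)$, and then $j_E([\Lambda])=[\Lambda\cap W]$. Your Morita-theoretic formulation is the same idea in different language: the primitive idempotent in $\Matr_r(D')$ that produces $V'$ from $V$ cuts out exactly such a subspace, and transporting a lattice function under Morita equivalence is precisely intersecting it with that subspace. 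So on the construction side your sketch and what the paper recalls coincide, and your own caveat about the rescaling constant is the right thing to watch (Proposition~\ref{propBLII31} makes this explicit case by case rather than via a uniform formula).

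Your argument for the uniqueness of $j_E^{-1}$, however, has a real gap. The step ``the map is determined by its value at a single vertex'' is false as stated: an affine map between affine Euclidean spaces is not determined by one value, and transitivity of $\tilde{G}_E$ on apartments only reduces the problem to a single apartment, not to a single point. Moreover, the clause ``that value must coincide with the one produced by $j_E^{-1}$ by the intersection characterisation'' is circular: an arbitrary affine $\tilde{G}_E$-equivariant $\phi$ is not assumed to satisfy any intersection formula, so there is nothing forcing $\phi(v)$ to agree with $j_E^{-1}(v)$ at a chosen vertex. A clean way to close the gap is to compose: since you have already shown that $j_E$ is a bijection onto $\building_{red}(\tilde{G})^{E^\times}$ and that $\im(\phi)$ lies in this set, $j_E\circ\phi$ is an affine $\tilde{G}_E$-equivariant self-map of $\building_{red}(\tilde{G}_E)$, and one then invokes the fact that the only such self-map is the identity. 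That last fact (proved by analysing the action of a maximal split torus and its normalizer on the corresponding apartment) is the substantive content of the uniqueness clause and is not as immediate as your sketch suggests.
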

%*****************************************Seite 4******************************

\begin{remark}\label{remCentralizer}
 \be
\item The centralizer of $E$ in $\End_D(V)$ is $E$-algebra isomorphic to a ring $\End_\Delta(W)$ for some vector space 
$W$ over a skew-field $\Delta$. From Brauer theory it follows that the $\Delta$-dimension of $W$ is $\frac{m\Gcd{[E:k]}{d}}{[E:k]}$ 
and $\Delta$ has index $\frac{d}{\Gcd{[E:k]}{d}}$.  
\item Let us recall how the map $j_E$ is constructed. The map is induced by a $\tilde{G}_E$-equivariant affine map $j_E^1$ 
from $\building(\tilde{G})^{E^\times}$ to $\building(\tilde{G}_E)$, i.e. $j_E([\Lambda])=[j_E^1(\Lambda)]$. For the sake of simplicity and this article 
we only need the description for the case where $E|k$ is unramified and embeddable in $D|k$. Choose a maximal unramified field extension $L|k$ in $D$ and  
a uniformizer $\pi_D$ which normalizes $L$. Denote by $\Delta$ the centralizer in $D$ of the intermediate field between $L$ and $k$ of degree $[E:k]$.  Let $1^i,i\in\{1,\ldots,[E:k]\}$, be the idempotents of $\tens{E}{k}{L}$ and put $W:=1^1V$. The map $j_E^1$ is of the form 
\beq\label{eqjE1}
j_E^1(\Lambda)=\Theta\in\Lattone{o_\Delta}{W},\text{ s.t. }\Lambda(t)=\bigoplus_{i=0}^{[E:k]-1}\Theta\left(t-\frac{i}{d}\right)\pi_D^i,
\eeq
(\cite[II.3.1]{broussousLemaire:02}) and $\tilde{G}_E$ is identified with $\Aut_\Delta(W)$. 
\ee
\end{remark}

We now recall the concept of embeddings which is related to buildings of centralizers of $\tilde{G}$. We recommend \cite{broussousGrabitz:00} as a good introduction.
Let us denote by $E_D$ the intermediate field of $E|k$ which is unramified over $k$ and whose degree is the greatest common divisor of $d$ and the residue class degree $f(E|k)$ of $E|k$. This
 is exactly the greatest field in $E$ which can be embedded into a maximal unramified field extension $L|k$ of $D$. 
Let us recall the definition of an embedding.
 
 \begin{definition}
  \begin{enumerate}
   \item The {\it normalizer} of a hereditary order $\mf{a}$ of $\End_D(V)$ in $\tilde{G}$ is the set $\mf{n}(\mf{a})$ of all elements $g$ of $\tilde{G}$ for which $\mathfrak{a}$ is equal to $g\mf{a}g^{-1}$.
   \item An {\it embedding} is a pair $(E,\mathfrak{a})$ with a subfield $E$ of $\End_D(V)$ which extends $k$ and a hereditary order $\mathfrak{a}$ {\it normalized} by $E$, i.e. $ E^\times$ is a subset of $\mf{n}(\mf{a})$. 
   \item Two embeddings $(E_1,\mathfrak{a}_1)$ and $(E_2,\mathfrak{a}_2)$ are {\it equivalent} to each other if there is an element $g\in\tilde{G}$ such that $(g{E_1}_{D}g^{-1},g\mathfrak{a}_1g^{-1})$ is equal to $({E_2}_{D},\mathfrak{a}_2)$.
  \end{enumerate}
 \end{definition}

The importance of the equivalence of embeddings is described in the following theorem.   

\begin{proposition}[consequence of \cite{broussousGrabitz:00},~3.2]\label{propBroussousGrabitzNoetherThm}
\begin{enumerate}
\item Suppose we are given two equivalent embeddings \\ $(E_1,\mathfrak{a})$ and $(E_2,\mathfrak{a})$ of $\End_D(V)$ and a $k$-algebra isomorphism 
$\phi$ from $E_1$ to $E_2$. Then $\phi$ can be realized as conjugation by an element of $\mathfrak{n}(\mathfrak{a})$.
\item Two equivalent embeddings $(E,\mathfrak{a})$ and $(E,\mathfrak{a'})$ of $\End_D(V)$ are conjugate by an element $g$ of the centralizer $\tilde{G}_E$ of $E$ in $\tilde{G}$.  
\end{enumerate}
\end{proposition}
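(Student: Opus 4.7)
The plan is to deduce both statements from the cited theorem \cite[3.2]{broussousGrabitz:00}, and in particular to derive (2) from (1) by a short adjustment.

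For part (1) I would proceed in two reductions. Since $E_{i,D}$ is characterised intrinsically inside $E_i$ as the unramified intermediate field of $E_i|k$ of degree $\Gcd{d}{f(E_i|k)}$, any $k$-algebra isomorphism $\phi:E_1\to E_2$ restricts to an isomorphism $\phi|_{E_{1,D}}:E_{1,D}\to E_{2,D}$. The embeddings $(E_{1,D},\mathfrak{a})$ and $(E_{2,D},\mathfrak{a})$ are equivalent (the same $g$ realising the equivalence of the original embeddings works) and each field already coincides with its $D$-part, so \cite[3.2]{broussousGrabitz:00} applies directly and produces $n_1\in\mathfrak{n}(\mathfrak{a})$ realising $\phi|_{E_{1,D}}$. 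After replacing $E_1$ by $\tilde{E}_1:=n_1E_1n_1^{-1}$ and $\phi$ by the induced isomorphism $\phi':\tilde{E}_1\to E_2$, we may assume $\tilde{E}_{1,D}=E_{2,D}=:L$ and $\phi'|_L=\id_L$. The second reduction is to pass to the centralizer $B:=\End_D(V)_L\cong\End_\Delta(W)$ from Remark~\ref{remCentralizer}, a central simple $L$-algebra of index $d/[L:k]$. The intersection $\mathfrak{b}:=\mathfrak{a}\cap B$ is the hereditary $o_L$-order associated to the facet of $\mathfrak{a}$ under $j_L$, and is normalised by $\tilde{E}_1^\times,E_2^\times\subset B^\times\cap\mathfrak{n}(\mathfrak{a})$. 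With $f:=f(E_i|k)$ one has $f(E_i|L)=f/\Gcd{d}{f}$ and $\ind(\Delta)=d/\Gcd{d}{f}$, and these are coprime, so $(\tilde{E}_1)_\Delta=(E_2)_\Delta=L$. Hence $(\tilde{E}_1,\mathfrak{b})$ and $(E_2,\mathfrak{b})$ are trivially equivalent embeddings of $B$ (with respect to its $L$-structure), and a second application of \cite[3.2]{broussousGrabitz:00}, now over $L$, produces $n_2\in\mathfrak{n}_{B^\times}(\mathfrak{b})$ realising the $L$-linear isomorphism $\phi'$. Using the explicit Broussous--Lemaire formula~(\ref{eqjE1}) together with the $D$-linearity of $n_2$, a translation of $\Theta:=j_L^1(\Lambda)$ by $n_2$ lifts to the same translation of $\Lambda$, so $n_2\in\mathfrak{n}(\mathfrak{a})$, and $n_2n_1$ realises the original $\phi$.

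For part (2) I would argue as follows. Pick $g'\in\tilde{G}$ realising the equivalence, so $g'E_Dg'^{-1}=E_D$ and $g'\mathfrak{a} g'^{-1}=\mathfrak{a}'$, and set $E':=g'^{-1}Eg'$. Then $(E',\mathfrak{a})$ and $(E,\mathfrak{a})$ are trivially equivalent embeddings of $\End_D(V)$ with the same hereditary order. Applying (1) to the $k$-algebra isomorphism $\phi:E'\to E$ given by $g'^{-1}xg'\mapsto x$ produces $n\in\mathfrak{n}(\mathfrak{a})$ with $n(g'^{-1}xg')n^{-1}=x$ for all $x\in E$. Setting $g:=g'n^{-1}$, one checks that $g$ centralises $E$ and satisfies $g\mathfrak{a} g^{-1}=g'\mathfrak{a} g'^{-1}=\mathfrak{a}'$.

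The main obstacle I anticipate is the inclusion $\mathfrak{n}_{B^\times}(\mathfrak{b})\subset\mathfrak{n}(\mathfrak{a})$ used at the end of part (1). This is not purely formal: it depends on the explicit lifting of lattice functions in~(\ref{eqjE1}), which ensures that a translation of the $\Delta$-lattice function $\Theta$ by $n_2\in B^\times$ is reflected as the same translation of the underlying $D$-lattice function $\Lambda$.
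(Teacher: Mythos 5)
Your part (2) is essentially the paper's argument (you take $g'$ with $g'\mathfrak{a}g'^{-1}=\mathfrak{a}'$ rather than $g\mathfrak{a}'g^{-1}=\mathfrak{a}$, but the computation is the same). For part (1), however, you take a genuinely different route. The paper's proof is a single step: $E_1|k$ is primitive (an unramified extension followed by a primitive one, both separable enough in this setting), so one fixes a generator $\alpha$, uses Skolem--Noether to produce $g\in\tilde{G}$ realising $\phi$, and then invokes \cite[3.2]{broussousGrabitz:00} \emph{once}, on the full extension, to replace $g$ by some $g'\in\mathfrak{n}(\mathfrak{a})$ with $g'\alpha g'^{-1}=g\alpha g^{-1}$. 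The "equivalent embeddings" hypothesis of that theorem already encodes the compatibility of the $E_{i,D}$'s, so no preliminary reduction to $E_D$ is needed. Your two-step reduction — first align the $E_{i,D}$'s by one application of \cite[3.2]{broussousGrabitz:00}, then pass to the centralizer $B=\End_\Delta(W)$ of $L$ and apply \cite[3.2]{broussousGrabitz:00} again over $L$ where the $\Delta$-parts are trivial — is valid but strictly harder: it requires knowing $\mathfrak{b}=\mathfrak{a}\cap B$ is hereditary, the coprimality of $f(E_i|L)$ and $\ind(\Delta)$ (which does follow, as you note, from $\gcd(f/g,d/g)=1$ when $g=\gcd(d,f)$), and the nontrivial inclusion $\mathfrak{n}_{B^\times}(\mathfrak{b})\subset\mathfrak{n}(\mathfrak{a})$, which you correctly flag and correctly justify via the explicit lifting formula~(\ref{eqjE1}) (a normaliser of $\mathfrak{b}$ translates $\Theta$, hence translates $\Lambda$ by the same amount, hence normalises $\mathfrak{a}$).

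Two small points you should not leave implicit. First, each application of \cite[3.2]{broussousGrabitz:00} as the paper uses it requires a preliminary Skolem--Noether step to produce \emph{some} conjugating element in the ambient group; your write-up treats the citation as if it directly outputs a normaliser element realising an abstract $k$- (resp.\ $L$-) algebra isomorphism, which glosses over the "there exists a conjugator in $A^\times$" input and the primitive-element fact. Second, for your second application you tacitly use that $\tilde{E}_1|L$ is primitive so that the cited theorem (which is stated for a generating element) applies. Both are easy to supply — Skolem--Noether over $L$ and separability of $\tilde{E}_1|L$ — but as written they are gaps. In short: your proof is correct once these are filled in, but it re-derives by hand (via the centralizer passage and the normaliser inclusion) what the hypothesis of \cite[3.2]{broussousGrabitz:00} already absorbs, so the paper's one-step argument is the more economical one.
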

%*********************************Seite 5******************************

\begin{proof}
 \begin{enumerate}
  \item The finite field extension $E_1|k$ is primitive, because it is a tower of two primitive extensions where the first is unramified and thus separable. Fix a genrator $\alpha$ of $E_1$. We apply the Skolem--Noether Theorem to realize $\phi$ as a conjugation by an element $g$ of $\tilde{G}$. By \cite[3.2]{broussousGrabitz:00}, there 
is an element $g'$ of $\mathfrak{n}(\mathfrak{a})$ such that $g'\alpha g'^{-1}$ is equal to $g\alpha g^{-1}$. This proves 1.
\item There is an element $g$ of $\tilde{G}$ such that 
\[ gE_Dg^{-1}=E_D\text{ and }  g\mf{a'}g^{-1}=\mf{a}.\]
The statement follows from 1. applied to $(gEg^{-1},\mf{a})$, $(E,\mf{a})$ and $\phi$ defined by $\phi(x)=g^{-1}xg,\ x\in gEg^{-1}$.
 \end{enumerate}
\end{proof}

In many cases the embeddings $(E,\mathfrak{a})$ and $(E,\mathfrak{a'})$ are automatically equivalent if $\mathfrak{a}$ and $\mathfrak{a}'$ are conjugate under
$\tilde{G}$ even if $E_D$ is not $k$. In fact we have the following stronger lemma.

\begin{lemma}\label{lemGoingDownConugation}
Assume that $\frac{d}{\Gcd{d}{e(E|k)}}$ is prime to $f(E|k)$. 
If two elements $\Lambda,\ \Lambda'$ of $\Lattone{o_D}{V}$ are conjugate under an element $g\in\tilde{G}$, i.e. $g\Lambda=\Lambda'$, then there is an element $\tilde{g}$ of $\tilde{G}_{E_D}$ such that $\tilde{g}\Lambda=\Lambda'$.
\end{lemma}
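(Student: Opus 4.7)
The strategy is to push the question down to the centralizer $\tilde{G}_{E_D}$ via the explicit Broussous--Lemaire lift of Remark~\ref{remCentralizer}(2), and then compare shape invariants on both sides. Write $F := E_D$. In the intended context, $(E,\mf{g}_\Lambda(0))$ and $(E,\mf{g}_{\Lambda'}(0))$ are embeddings, so $F^\times$ normalises both hereditary orders and therefore both $\Lambda$ and $\Lambda'$ lie in the domain of $j_F^1$. Set $\Theta := j_F^1(\Lambda)$ and $\Theta' := j_F^1(\Lambda')$ in $\Lattone{o_\Delta}{W}$; since $j_F^1$ is a $\tilde{G}_F$-equivariant bijection, the task is equivalent to producing $\tilde{g} \in \tilde{G}_F = \Aut_\Delta(W)$ with $\tilde{g}\Theta = \Theta'$.

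Next I would invoke the classification of $\tilde{G}$-orbits on $\Lattone{o_D}{V}$ and of $\tilde{G}_F$-orbits on $\Lattone{o_\Delta}{W}$ by their shape functions
\[\bar{s}(t) := \dim_{\kappa_D}\bigl(\Lambda(t)/\Lambda(t+)\bigr), \qquad s(t) := \dim_{\kappa_\Delta}\bigl(\Theta(t)/\Theta(t+)\bigr),\]
which are $1/d$- and $1/d'$-periodic, respectively, with $d' = d/[F:k]$ the index of $\Delta$. By the formula of Remark~\ref{remCentralizer}(2) they satisfy the trace-type relation
\[\bar{s}(t) = \sum_{i=0}^{[F:k]-1} s(t - i/d).\]
The assumption $g\Lambda = \Lambda'$ translates into $\bar{s}_\Lambda = \bar{s}_{\Lambda'}$, while what needs to be proved is $s_\Theta = s_{\Theta'}$.

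The core difficulty, which I expect to be the main obstacle, is to invert this relation, i.e.\ to show that $s$ is determined by $\bar{s}$. For this I would exploit the extra periodicity coming from $E^\times \subset \mf{n}(\mf{g}_\Lambda(0))$: the uniformiser $\pi_E$ translates $\Lambda$ and $\Theta$ by $1/e(E|k)$, so $\bar{s}$ is in fact $1/\operatorname{lcm}(d,e(E|k))$-periodic and $s$ is $1/\operatorname{lcm}(d',e(E|k))$-periodic. A prime-by-prime analysis (splitting at each prime $p$ into the cases $v_p(e(E|k)) \geq v_p(d)$ and $v_p(e(E|k)) < v_p(d)$) shows that the hypothesis ``$d/\Gcd{d}{e(E|k)}$ prime to $f(E|k)$'' is exactly equivalent to $\operatorname{lcm}(d,e(E|k)) = \operatorname{lcm}(d',e(E|k))$. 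Writing $N$ for this common value, $d \mid N$ holds automatically, and the $1/N$-periodicity of $s$ forces $s(t - i/d) = s(t)$ for every $i$; the trace relation then collapses to $\bar{s}(t) = [F:k]\cdot s(t)$, so $s = \bar{s}/[F:k]$. Hence $\bar{s}_\Lambda = \bar{s}_{\Lambda'}$ yields $s_\Theta = s_{\Theta'}$, $\Theta$ and $\Theta'$ lie in the same $\tilde{G}_F$-orbit, and pulling back via $j_F^1$ produces the required $\tilde{g} \in \tilde{G}_{E_D}$.
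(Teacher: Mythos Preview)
Your proof is correct and takes essentially the same approach as the paper: both pass to $\Theta=j^1_{E_D}(\Lambda)$ and use the arithmetic hypothesis to force the jump function of $\Theta$ to be $1/d$-periodic, so that the decomposition in Remark~\ref{remCentralizer}(2) determines the shape of $\Theta$ from that of $\Lambda$. The paper phrases the $1/d$-periodicity by exhibiting an element $g_3\in\tilde G_{E_D}$ with $g_3\Theta=\Theta+1/d$, while you phrase it via the identity $\operatorname{lcm}(d,e)=\operatorname{lcm}(d',e)$; these are equivalent (note only that with your normalisations $\bar s$ over $\kappa_D$ and $s$ over $\kappa_\Delta$ the trace relation should carry a factor $[F:k]$ on the left, a harmless slip that does not affect the argument).
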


\begin{proof}
 We consider the map $j_{E_D}^1$ from Remark \ref{remCentralizer}. We denote $j_{E_D}^1(\Lambda)$ by $\Theta$ which is an $o_\Delta$-lattice function where $\Delta$ is the centralizer of an embedding of $E_D$ in $D,$ and 
it is still an $o_E$-lattice function. More precisely there are elements $g_1$ and $g_2$ in $\tilde{G}_{E_D}$ such that 
\[g_1.\Theta=\Theta+\frac1{\Gcd{e(E|k)}{d}}\text{ and }g_2.\Theta=\Theta+\frac{[E_D:k]}d.\]
The assumption of the proposition implies that there is a product $g_3$ of powers of $g_1$ and $g_2$ such that $g_3.\Theta$ is equal to $\Theta+\frac1d$. 
We can conclude by the form of $j_{E_D}^1$ in Remark \ref{remCentralizer} that the $\kappa_k$-dimension of $\Theta(t)/\Theta(t+)$ is completely determined
by $\Lambda(t)/\Lambda(t+)$. The same is true for $\Lambda'$ and $\Theta':=j_{E_D}^1(\Lambda')$. The equality of the $\kappa_k$-dimension of $\Theta(t)/\Theta(t+)$ with that of 
$\Theta'(t)/\Theta'(t+)$, for all $t\in\bbR$, implies that there is an element $\tilde{g}$ of $\tilde{G}_E$ which satisfies  $\tilde{g}\Theta=\Theta'$ and thus $\tilde{g}\Lambda$ is equal to $\Lambda'$. 
\end{proof}

\begin{corollary}\label{corEmbTypepreserving}
Assume that $\frac{d}{\Gcd{d}{e(E|k)}}$ is prime to $f(E|k)$. 
Two embeddings $(E,\mf{a})$ and $(E,\mf{a}')$ are conjugate under $\tilde{G}_E$ if and only if $\mf{a}$ and $\mf{a}'$ are conjugate under $\tilde{G}$.
\end{corollary}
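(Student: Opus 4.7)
The ``only if'' direction is trivial from $\tilde{G}_E\subseteq\tilde{G}$. For the converse, assume $\mf{a}'=g\mf{a}g^{-1}$ for some $g\in\tilde{G}$. The strategy is to use Lemma~\ref{lemGoingDownConugation} to lift this to a conjugacy within $\tilde{G}_{E_D}$ and thereby show that the embeddings $(E,\mf{a})$ and $(E,\mf{a}')$ are equivalent; Proposition~\ref{propBroussousGrabitzNoetherThm}(2) then automatically upgrades equivalence to conjugacy within $\tilde{G}_E$ itself.

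To set up the lemma, I pick lattice functions $\Lambda,\Lambda'\in\Lattone{o_D}{V}$ whose associated facets correspond to $\mf{a}$ and $\mf{a}'$ and which lie in the domain of the centralising map $j_{E_D}^1$ of Remark~\ref{remCentralizer}; this is possible because $E^\times$ normalises both hereditary orders, so their classes in $\building_{red}(\tilde{G})$ are $E_D^\times$-fixed. The translate $g\Lambda$ has hereditary order $\mf{a}'$ and therefore shares its lattice chain image with $\Lambda'$, so $g\Lambda=\Lambda'+s$ for some $s\in\mathbb{R}$. Replacing $\Lambda'$ by $\Lambda'+s$ preserves both its hereditary order and its membership in the domain of $j_{E_D}^1$, and allows me to assume outright that $g\Lambda=\Lambda'$.

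At this stage Lemma~\ref{lemGoingDownConugation} is directly applicable, since the standing hypothesis that $\frac{d}{\Gcd{d}{e(E|k)}}$ is prime to $f(E|k)$ is carried over unchanged. It yields an element $\tilde{g}\in\tilde{G}_{E_D}$ with $\tilde{g}\Lambda=\Lambda'$. Passing to hereditary orders gives $\tilde{g}\mf{a}\tilde{g}^{-1}=\mf{a}'$, and $\tilde{g}\in\tilde{G}_{E_D}$ guarantees $\tilde{g}E_D\tilde{g}^{-1}=E_D$. Combined, these are precisely the defining conditions for $(E,\mf{a})$ and $(E,\mf{a}')$ to be equivalent embeddings in the sense of the definition preceding Proposition~\ref{propBroussousGrabitzNoetherThm}. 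Applying part~(2) of that proposition then produces an element of $\tilde{G}_E$ conjugating $\mf{a}$ to $\mf{a}'$, completing the converse.

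The crux of the argument is the bookkeeping in the second paragraph: Lemma~\ref{lemGoingDownConugation} demands a conjugacy of lattice functions themselves, while the hypothesis only gives conjugacy of the associated hereditary orders. One must both select lattice function representatives suitable for applying the centraliser map $j_{E_D}^1$, and absorb the translational discrepancy $s$ between $g\Lambda$ and $\Lambda'$ without disturbing that suitability. Both are routine in the framework already recalled from \cite{broussousLemaire:02}, and once they are granted the argument is merely a composition of Lemma~\ref{lemGoingDownConugation} with the Skolem--Noether-style Proposition~\ref{propBroussousGrabitzNoetherThm}.
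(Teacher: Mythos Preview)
Your proof is correct and follows essentially the same approach as the paper: apply Lemma~\ref{lemGoingDownConugation} to lattice function representatives of the two facets, then invoke Proposition~\ref{propBroussousGrabitzNoetherThm}(2). The paper's version is terser---it simply says ``apply Lemma~\ref{lemGoingDownConugation} on the barycenters of $\mf{a}$ and $\mf{a}'$ in $\building(\tilde{G})$''---whereas you spell out the translation adjustment needed to pass from conjugacy of hereditary orders to conjugacy of actual lattice functions, but the substance is identical.
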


\begin{proof}
We only have to proof the "if" part. We apply firstly Lemma \ref{lemGoingDownConugation} on the barycenters of $\mf{a}$ and $\mf{a}'$ in $\building(\tilde{G})$ and secondly 
Proposition \ref{propBroussousGrabitzNoetherThm}.
\end{proof}

\begin{definition}
Let $B$ be a Euclidean building and fix a labeling of the vertices. The {\it (simplicial) type of $x$ in $B$} are the barycentric coordinates of $x$ with respect 
to the  vertexes of a chamber whose closure contains $x$. The barycentric coordinates do not depend on the chosen chamber. 
\end{definition}

\begin{remark}\label{remTypePreserving}
 \be
\item Two elements $x,y$ of $\building_{red}(\tilde{G})$ have the same type if and only if there is an element of $\SL_D(V)$ which maps $x$ to $y$. Indeed, $\SL_D(V)$ acts strongly transitive on the simplicial structure of $\building_{red}(\tilde{G})$.
\item  By 2. of Proposition \ref{propBroussousGrabitzNoetherThm} two facets of $\building_{red}(\tilde{G})$ give equivalent embeddings with respect to $E$ if and only if 
the $j_E$-images of the barycenters have the same type in $\building_{red}(\tilde{G}_E)$ up to a rotation of the Coxeter diagram of $\building_{red}(\tilde{G}_E)$.
\ee
\end{remark}
%*************************************Seite 6*******************************

Here we come to a proposition similar to Corollary \ref{corEmbTypepreserving}. The difference here is that we only consider type-preserving automorphisms of $\building_{red}(\tilde{G})$ and of $\building_{red}(\tilde{G}_E)$ 

\begin{proposition}\label{propTyppreserving}
Assume that $\frac{d}{\Gcd{d}{e(E|k)}}$ is prime to $f(E|k)$. 
Then two elements $x$ and $y$ of $\building_{red}(\tilde{G})^{E^\times}$ are of the same type in $\building_{red}(\tilde{G})$ if and only if $j_{E}(x)$ and $j_E(y)$ are of the same type in $\building_{red}(\tilde{G}_E).$
\end{proposition}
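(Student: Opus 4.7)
My plan is to unpack "same type" via the $\SL$-orbit characterization of Remark \ref{remTypePreserving}.1, to use Lemma \ref{lemGoingDownConugation} to move the conjugating element into the centralizer $\tilde{G}_E$, and finally to verify by a reduced-norm calculation that the conjugating element can be chosen in the appropriate type-preserving subgroup on each side.

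For the forward direction, pick $g\in\SL_D(V)$ with $gx=y$, lift $x,y$ to $E^\times$-invariant lattice function representatives $\Lambda,\Lambda'$ with $g\Lambda=\Lambda'$, and invoke Lemma \ref{lemGoingDownConugation} to obtain $\tilde{g}\in\tilde{G}_E$ with $\tilde{g}\Lambda=\Lambda'$. The defining equivariance of $j_E^1$ yields $\tilde{g}\Theta=\Theta'$ for $\Theta:=j_E^1(\Lambda)$ and $\Theta':=j_E^1(\Lambda')$, so $\tilde{g}\, j_E(x)=j_E(y)$ in $\building_{red}(\tilde{G}_E)$. The crux is to adjust $\tilde{g}$ to lie in $\SL_\Delta(W)$: the equality $\tilde{g}\Lambda=\Lambda'$ is translation-free, and combined with $\Nrd_D(g)=1$ together with the standard formula $\Nrd_D(\tilde{g})=\Nm_{E/k}(\Nrd_\Delta(\tilde{g}))$ valid on the centralizer $\tilde{G}_E$, this constrains $\nu_E(\Nrd_\Delta(\tilde{g}))$ modulo $\dim_\Delta W$. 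The residual ambiguity can be absorbed by multiplying $\tilde{g}$ by a suitable element of the stabilizer of $j_E(x)$ in $\tilde{G}_E$, yielding $\tilde{h}\in\SL_\Delta(W)$ with $\tilde{h}\, j_E(x)=j_E(y)$; a second application of Remark \ref{remTypePreserving}.1 (now to $\tilde{G}_E=\GL_\Delta(W)$) then delivers the desired equality of types.

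The converse is easier: given $\tilde{h}\in\SL_\Delta(W)$ with $\tilde{h}\, j_E(x)=j_E(y)$, the $\tilde{G}_E$-equivariance of $j_E^{-1}$ gives $\tilde{h}\, x=y$ in $\building_{red}(\tilde{G})$; the formula $\Nrd_D(\tilde{h})=\Nm_{E/k}(\Nrd_\Delta(\tilde{h}))=1$ then places $\tilde{h}$ in $\SL_D(V)$, so Remark \ref{remTypePreserving}.1 again yields the same type in $\building_{red}(\tilde{G})$. This direction, notably, does not require the gcd hypothesis.

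The main obstacle is the reduced-norm bookkeeping in the forward direction: one must verify that the stabilizer of $j_E(x)$ in $\tilde{G}_E$ has $\Nrd_\Delta$-image large enough to absorb the ambiguity, and that the gcd hypothesis that $\frac{d}{\gcd(d,e(E|k))}$ is prime to $f(E|k)$ supplies exactly the arithmetic congruence needed between $\nu_k\circ\Nrd_D$ and $\nu_E\circ\Nrd_\Delta$. Adapting the construction of the translation elements $g_1,g_2$ from the proof of Lemma \ref{lemGoingDownConugation}—which combine, under precisely this gcd hypothesis, into a translation by $1/d$—should supply the required $\SL_\Delta(W)$-witness directly.
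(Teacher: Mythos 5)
Your forward direction rests on a misreading of Lemma~\ref{lemGoingDownConugation}: that lemma produces an element $\tilde g$ of $\tilde G_{E_D}$, the centralizer of the maximal \emph{unramified-and-embeddable} subfield $E_D$, not of $\tilde G_E$. So after applying it you only control the images $j_{E_D}(x)$, $j_{E_D}(y)$ in $\building_{red}(\tilde G_{E_D})$; the conclusion $\tilde g\,j_E(x)=j_E(y)$ in $\building_{red}(\tilde G_E)$ does not follow. The reduced-norm observation you make (that $\tilde g^{-1}g\in\mf a_x^\times$, hence $\Nrd_{A|k}(\tilde g)\in o_k^\times$, hence $\Nrd_{A_{E_D}|E_D}(\tilde g)\in o_{E_D}^\times$) does show that $j_{E_D}(x)$ and $j_{E_D}(y)$ have the same type in $\building_{red}(\tilde G_{E_D})$ --- this is exactly the paper's Case~2 reduction --- but you are then still left with the problem of descending from $E_D$ to $E$, and that is where the real content of the proposition lies.

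That remaining step is the paper's Case~1 ($\Gcd{f(E|k)}{d}=1$), applied inside $A_{E_D}$, and your ``absorb the residual ambiguity by multiplying with a stabilizer element'' sketch does not reproduce it. Two ingredients are missing. First, the paper needs Grabitz's structure theorem $\mf n(\mf a_x)=\mf a_x^\times\langle z\rangle$ with $z\in\mf n(\mf a_{j_E(x)})$ (\cite[2.2]{grabitz:99}) to produce, from an arbitrary conjugator, one lying in $\tilde G_E$ with reduced norm a unit; a bare valuation count on $\Nrd_\Delta$ of the pointwise stabilizer of $j_E(x)$ does not obviously give this, since that stabilizer is smaller than $\mf n(\mf a_{j_E(x)})$ when $j_E(x)$ is not a barycenter. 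Second, and more importantly, conjugating the hereditary order $\mf a_{j_E(x)}$ to $\mf a_{j_E(y)}$ by a type-preserving element only moves facets to facets; to upgrade this to $g_3.x=y$ one needs the fact (again from \cite[2.2]{grabitz:99}) that there is a unique facet of $\building_{red}(\tilde G)^{E^\times}$ mapping into the open facet $\mf a_{j_E(x)}$. Your outline does not address this facet-uniqueness at all, and without it the argument stalls at ``same facet'' rather than ``same type/point''. The converse direction you give is fine (and slightly more explicit than the paper's ``trivial''), but as it stands the forward direction has a genuine gap at the citation of Lemma~\ref{lemGoingDownConugation} and again at the absorption step.
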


The barycenter of a facet $F$ is denoted by $\bary(F)$.
Although the statement is geometric the given proof is algebraic and uses the reduced norm. For more details about reduced norms we recommend \cite{reiner:03}. 
Write $A$ for $\End_D(V)$ and $A_E$ for the centralizer of $E$ in $A$. We write $\Nrd_{?|*}$ for the reduced norm and $\Nm_{?|*}$ if $?|*$ is a field extension. 
We write $\nu_E$ for the normalized valuation of $E$. In the proof we use the well-known fact that $g\in \tilde{G}$ is type-preserving if and only if 
$\nu(\Nrd_{A|k}(g))$ is a multiple of $m$, and we use the tower law: 
\[\Nrd_{A|k}(g)=\Nm_{E|k}(\Nrd_{A_E|E}(g)),\]
for $g\in B$.

\begin{proof}
The proof is motivated by the proof of \cite[3.2]{broussousGrabitz:00} (see Proposition \ref{propBroussousGrabitzNoetherThm} above) which also uses results of \cite{grabitz:99} on good continuations of hereditary orders, a terminology which we do not introduce here. The ``if'' part is trivial. 
\be
\item Case $\Gcd{f(E|k)}{d}=1$:
The important part is the ``only if'' one. The embeddings $(E,\mf{a}_x)$ and $(E,\mf{a}_y)$ are equivalent because of the condition on 
$f(E|k)$ and therefore there is an element $g_1$ of $\tilde{G}_E$ which conjugates $\mf{a}_x$ to $\mf{a}_y$. By assumption there is also an element $g$ of $\SL_D(V)$ which conjugates 
$y$ to $x$, i.e. $gg_1$ is an element of $\mf{n}(\mf{a}_x)$. Using the condition on $f(E|k)$ again there is an element $z$ of $\mf{n}(\mf{a}_{j_E(x)})$, the normalizer of $\mf{a}_{j_E(x)}$ in $A_E^\times$,  such that $\mf{n}(\mf{a}_x)$ is equal to $\mf{a}_x^\times\langle z\rangle$, see \cite[2.2]{grabitz:99}.  Thus there is an element $g_2$ of $\mf{n}(\mf{a}_{j_E(x)})$ such that $gg_1g_2$ lies in $\mf{a}^\times_x$, especially $\Nrd_{A|k}(g_1g_2)$ is a unit of $o_k$ and thus $\Nrd_{A_E|E}(g_1g_2)$ is a unit of $o_E$. This implies that $g_1g_2$ is type-preserving on $\building_{red}(\tilde{G}_E)$. In other words there is an element $g_3$ of $\tilde{G}_E$ with reduced norm one such that $g_3\mf{a}_{j_E(x)}g_3^{-1}$, which is $\mf{a}_{j_E(g_3.x)}$, is equal to $\mf{a}_{j_E(y)}$. 
We want to show $g_3.x=y.$ The condition on $f(E|k)$ to be prime to $d$ and \cite[2.2]{grabitz:99}
imply that there is only one facet $F$ of $\building_{red}(\tilde{G})^{E^\times}$ which satisfies 
that $j_E(\bary(F))$ is a point of $\mf{a}_{j_E(x)}$ (open facet). Thus $\mf{a}_y$ is equal to $\mf{a}_{g_3x}$, and thus $y=g_3.x$ because $g_3$ is type-preserving on $\building_{red}(\tilde{G})$.
\item Case $\frac{d}{\Gcd{d}{e(E|k)}}$ is prime to $f(E|k)$: Let $\Lambda$ and $\Lambda'$ be two lattice functions such that 
$x$ corresponds to $[\Lambda]$ and $y$ to $[\Lambda']$ and such that there is an element $g$ of $\SL_D(V)$ which satisfies $g\Lambda(t)=\Lambda'(t)$, for all $t\in\bbR$. Here we have used Remark \ref{remTypePreserving}(1). There is an element $g'\in\tilde{G}_{E_D}$ such that $g'\Lambda=\Lambda'$ by Lemma \ref{lemGoingDownConugation}.  
Thus $g'^{-1}g$ is an element of $\mf{a}_x^\times$ and we deduce that $\Nrd_{A|k}(g'^{-1})=\Nrd_{A|k}(g'^{-1}g)$ is an element of $o_k^{\times}$ and therefore $\Nrd_{A_{E_D}|E_D}(g'^{-1})\in o_{E_D}^{\times}$. It follows that $j_{E_D}(x)$ and $j_{E_D}(y)$ have the same simplicial type in $\building_{red}(\tilde{G}_{E_D})$ and the first case finishes the proof.  
\ee
\end{proof}

\begin{lemma}\label{lemTypePreservingembeddingType}
Suppose that there is an isometric simplicial group action on $\building_{red}(\tilde{G})$ by a cyclic group $T$ of order two. Assume that the generator $t$ of $T$ induces a reflection on the Coxeter diagram. Let $x$ and $y$ be two fixed points of $T$ in $\building_{red}(\tilde{G})$ with same cyclic barycentric coordinates up to rotation of the Coxeter diagram, i.e. there is an element $g$ of $\tilde{G}$ which sends $x$ to $y$.  Then we have:
\be
\item The points $x$ and $y$ have the same type in $\building_{red}(\tilde{G})$ if $\dim_DV$ is odd.
\item There are at most two different possible types for $y$ in $\building_{red}(\tilde{G})$ if $\dim_DV$ is even. 
\ee
\end{lemma}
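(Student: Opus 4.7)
The reduced building $\building_{red}(\tilde{G})$ is of Coxeter type $\tilde A_{m-1}$ with $m:=\dim_DV$, so its Coxeter diagram is a cycle on $m$ vertices that may be labeled globally by $\bbZ/m\bbZ$. The type of a point $z$ is then the tuple $\lambda(z)=(\lambda_0,\ldots,\lambda_{m-1})$ of barycentric coordinates in a chamber containing $z$, with $\lambda_i$ attached to the vertex of label $i$. The $\tilde G$-action induces only rotations of this cycle (the type-shift subgroup comes from $\tilde G/\SL_D(V)$, and dualising is an outer automorphism not in $\tilde G$), whereas the isometry $t$ induces a reflection $\tau\colon i\mapsto c-i$ of $\bbZ/m\bbZ$ for some fixed $c$.

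I would first translate the hypotheses into identities on tuples. Writing $\lambda$ and $\mu$ for the types of $x$ and $y$, the hypothesis that they have the same cyclic barycentric coordinates up to rotation together with $y=g.x$ for some $g\in\tilde G$ yields an integer $r$ with $\mu_j=\lambda_{j-r}$ for all $j\in\bbZ/m\bbZ$. Since $t.x=x$ and $t$ sends the type-$i$ vertex of any chamber $C$ to the type-$\tau(i)$ vertex of $t.C$, comparison of the barycentric coordinates of $x$ in $C$ and in $t.C$ forces $\lambda_{c-i}=\lambda_i$ for all $i$; the analogous identity $\mu_{c-j}=\mu_j$ holds for $y$.

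The core combinatorial step is then to combine these. Substituting $\mu_j=\lambda_{j-r}$ into $\mu_{c-j}=\mu_j$ gives $\lambda_{c-j-r}=\lambda_{j-r}$; reindexing by $i=j-r$ this reads $\lambda_{c-2r-i}=\lambda_i$. Hence $\lambda$ is invariant under the two reflections $i\mapsto c-i$ and $i\mapsto c-2r-i$ of $\bbZ/m\bbZ$, whose composition is the translation $i\mapsto i+2r$. Thus $\lambda$ is periodic with period dividing $p:=\Gcd{2r}{m}$.

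Finally I would conclude by a parity case analysis. If $m$ is odd, then $p=\Gcd{2r}{m}=\Gcd{r}{m}$ divides $r$, so $\mu_j=\lambda_{j-r}=\lambda_j$ and $x,y$ share the same type, giving (1). If $m$ is even and either $p$ is odd or $p\mid r$, the same identity yields $\mu=\lambda$; in the remaining case $p$ is even with $r\equiv p/2\pmod p$, and $p$-periodicity of $\lambda$ reduces $\mu_j=\lambda_{j-r}$ to $\lambda_{j-p/2}$, the half-period shift of $\lambda$. Thus $\mu$ is either $\lambda$ or its shift by $p/2$, showing (2). The main obstacle is bookkeeping with the arithmetic in $\bbZ/m\bbZ$ together with the (slightly non-trivial) fact that $\tilde G$ induces only rotations on the Coxeter diagram; once one observes that the two $\tau$-symmetries compose to a translation, the rest is forced.
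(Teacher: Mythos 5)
Your approach is essentially the paper's: in both, the $T$-invariance of $x$ gives $\lambda_{c-i}=\lambda_i$, the $T$-invariance of $y$ combined with $\mu_j=\lambda_{j-r}$ gives a second reflection symmetry, and composing the two reflections yields $2r$-periodicity of $\lambda$; the conclusion then follows from a parity consideration. The paper packages that last step by introducing the period group $H_2$ and its preimage $H_1:=\{j:2j\in H_2\}$ under doubling and noting that $H_1/H_2$ is cyclic of exponent at most $2$, whereas you unwind it into explicit $\gcd$ arithmetic.

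There is, however, a small gap in your last sentence of part (2). You fix a valid $r$ and show that $\mu$ is either $\lambda$ or its shift by $p/2$ with $p:=\Gcd{2r}{m}$; but $p$ depends on $r$, so as $r$ varies over all rotations producing a $T$-fixed $y$ you have a priori produced a family of candidate shifts, not a single one, and this does not by itself bound the number of possible types of $y$ by two. The repair is short and is exactly what the paper's $H_1/H_2$ device accomplishes: replace $p$ by the fundamental period $q$ of $\lambda$ (the generator of $H_2$). Since the constraint on $r$ is precisely $q\mid 2r$, one has $r\equiv 0$ or $q/2\pmod q$, the latter only when $q$ is even, so $\mu$ is $\lambda$ or its $q/2$-shift, a fixed two-element set. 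Your treatment of the $m$-odd case is unaffected and matches the paper's observation that $|H_1/H_2|$ divides $m$.
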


%********************************Seite 7**************************************

\begin{proof} 
Fix a labeling of the vertices by $\mathbb{Z}/m\mathbb{Z}$, and let $(\alpha_i)_{i\in\Z/m\Z}$ be the barycentric coordinates of $x$. 
We define $H_1$ to be the subset of $\mathbb{Z}/m\mathbb{Z}$ of all $j$, such that $\alpha_{i+j}$ is equal to $\alpha_{t(i)+j}$ or equivalently to $\alpha_{i-j}$ 
for all $i\in\mathbb{Z}/m\mathbb{Z}$. For the last step we used that $t$ is a reflection of the Coxeter diagram and that $(\alpha_i)_{i\in\Z/m\Z}$ is invariant under $t$:
\[\alpha_{t(i)+j}=\alpha_{t(i-j)}=\alpha_{i-j}.\]
The set $H_1$ is a subgroup of $\mathbb{Z}/m\mathbb{Z}$. We further define $H_2$ to be the subgroup of $\mathbb{Z}/m\mathbb{Z}$ of all $j$
such that $\alpha_{i+j}$ is equal to $\alpha_{i}$ for all $i\in\mathbb{Z}/m\mathbb{Z}$. The group $H_1/H_2$ is cyclic, because $\mathbb{Z}/m\mathbb{Z}$ is, and 
every element of $H_1/H_2$ has order $1$ or $2$. Thus $H_1/H_2$ has at most two elements this proves 2. The groups $H_1$ and $H_2$ equal if $m$ is odd because 
the order of $H_1/H_2$ divides $m$. This proves 1.
\end{proof}

\section{The Euclidean building of a classical group}
From this section on, we consider the following situation. 
Let the residue characteristic of $k$ be odd and $\rho$ be an involution on $D,$ i.e. a bijective ring homomorphism from $D$ to $D^{op}$ of order two. The existence of the involution implies that the index is either 
one or two. We fix an element $\epsilon\in\{1,-1\}$ and a non-degenerate $\epsilon$-Hermitian form $h$ on $V$ with adjoint involution $\sigma$, i.e. $h$ is a non-degenerate bi-additive map from $V\times V$ to $D$ such that 
\[h(va,wb)=\rho(a)\epsilon\rho(h(w,v))b,\]
for all $v,w\in V$ and all $a,b\in D.$ We denote the fixed field of $k$ under $\rho$ by $k_0$.
The group of main interest in this article is the unitary group of $h$, i.e. 
\[G:=\U(h):=\U(\sigma):=\{g\in\tilde{G}\mid g\sigma(g)=\id_V\}.\]

We repeat the description of the Euclidean building $\building(G)$ of $G$ in terms of lattice functions. 
Let us recall that the {\it dual} $M^{\#,h}$ of an $o_D$-lattice  $M$ is defined via 
\[M^{\#,h}:=\{v\in V\mid\ h(v,M)\subseteq \mf{p}_D\}.\]
The {\it dual} $\Lambda^{\#,h}$ of an $o_D$-lattice function $\Lambda$ of $V$ is defined via
\[\Lambda^{\#,h}(t):=(\Lambda((-t)+))^{\#,h}.\]
We call $\Lambda$ {\it sef-dual} with respect to $h$ if it is equal to its dual. The set of self-dual lattice functions inherits an affine structure from $\Lattone{o_D}{V}$.
 
\begin{theorem}[\cite{bruhatTitsIV:87},~2.12, \cite{broussousStevens:09},~4.2,~3.3]
There is a unique $G$-equivariant affine bijection from $B(G)$ to the set of self-dual $o_D$-lattice functions of
$V$. 
\end{theorem}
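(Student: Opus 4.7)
The previous theorem supplies a $\tilde{G}$-equivariant affine bijection $\varphi\colon \Lattone{o_D}{V}\to \building(\tilde{G})$, unique up to translation of the source. My strategy is to realise $\building(G)$ as the fixed point set in $\building(\tilde{G})$ of the involution induced by $\sigma$, and to match this involution, under a suitably normalised $\varphi$, with the duality map $\Lambda\mapsto\Lambda^{\#,h}$ defined just before the statement.

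First I would check that $\Lambda\mapsto\Lambda^{\#,h}$ is a well-defined affine involution of $\Lattone{o_D}{V}$; the three lattice-function axioms transfer to the dual by direct computations using $\rho(\pi_D)=u\pi_D$ for some unit $u$ and the chosen normalisation of the valuation on $D$. Using the adjoint identity $h(v,gw)=h(\sigma(g)v,w)$ one obtains $(gM)^{\#,h}=\sigma(g)^{-1}M^{\#,h}$ for every $g\in\tilde{G}$ and every $o_D$-lattice $M$, so the duality map intertwines the $\tilde{G}$-action on $\Lattone{o_D}{V}$ with its twist by $g\mapsto\sigma(g)^{-1}$. In particular it is $G$-equivariant because $\sigma(g)^{-1}=g$ for $g\in G$, and the subset of self-dual lattice functions is a $G$-stable affine subspace.

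Next, the automorphism $g\mapsto\sigma(g)^{-1}$ of $\tilde{G}$ induces by functoriality an affine involution $\tau$ of $\building(\tilde{G})$ whose fixed group is $G$. Transporting $\tau$ through $\varphi$ gives an affine self-map of $\Lattone{o_D}{V}$ that intertwines the $\tilde{G}$-action in the same way as the duality map, hence the two differ only by a translation. Adjusting $\varphi$ by half of this translation (legitimate thanks to the odd residue characteristic, which allows halving) I arrange that $\varphi^{-1}\tau\varphi=(\,\cdot\,)^{\#,h}$. Invoking Bruhat--Tits descent \cite{bruhatTitsIV:87} for the order-two automorphism then identifies $\building(G)$ with $\building(\tilde{G})^\tau$, and the restriction of $\varphi$ provides a $G$-equivariant affine bijection from $\building(G)$ onto the set of self-dual $o_D$-lattice functions.

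For uniqueness, any two such bijections differ by a $G$-equivariant affine self-map of the affine space of self-dual lattice functions. Extending it along $\varphi$ to $\Lattone{o_D}{V}$ and applying the uniqueness-up-to-translation clause of the earlier theorem, the difference must be a translation $\Lambda\mapsto\Lambda+s$; but a direct calculation gives $(\Lambda+s)^{\#,h}=\Lambda^{\#,h}-s$, so a translate of a self-dual lattice function is self-dual only when $s=0$. The main obstacle will be the normalisation step in the third paragraph, i.e.\ showing that $\tau$ really corresponds to the duality map without a residual additive offset and that $\building(G)$ is the full fixed set $\building(\tilde{G})^\tau$ rather than a proper subset; both steps rest squarely on the odd residue characteristic, which makes halving available and the Galois descent for an order-two action go through cleanly.
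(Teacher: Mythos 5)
The paper does not prove this statement; it imports it from the cited references, so there is no in-paper proof to compare against. Your sketch reproduces the standard route from \cite{broussousStevens:09} and \cite{bruhatTitsIV:87}: transfer the Bruhat--Tits descent involution on $\building(\tilde{G})$ to $\Lattone{o_D}{V}$, identify it with $\Lambda\mapsto\Lambda^{\#,h}$ after normalising, then restrict. Two points deserve correction.

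First, the clause ``legitimate thanks to the odd residue characteristic, which allows halving'' is a misattribution: translations of $\Lattone{o_D}{V}$ are by arbitrary real numbers, so replacing $\varphi$ by $\varphi\circ T_{c/2}$ requires nothing about the residue field. What the odd residue characteristic does buy is the unramified-descent statement $\building(G)=\building(\tilde{G})^\tau$ (the content of \cite[2.12]{bruhatTitsIV:87} and \cite[4.2]{broussousStevens:09}); that is where the hypothesis is actually consumed, and you should not suggest it also underwrites the affine halving.

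Second, and more substantively, the uniqueness argument has a gap. You write ``extending it along $\varphi$ to $\Lattone{o_D}{V}$,'' but a $G$-equivariant affine self-map of the set $\mathcal{S}$ of self-dual lattice functions has no canonical $\tilde{G}$-equivariant extension to $\Lattone{o_D}{V}$: the $\tilde{G}$-orbit of $\mathcal{S}$ is not all of $\Lattone{o_D}{V}$, and $\tilde{G}$-equivariance is strictly stronger than $G$-equivariance, so the uniqueness-up-to-translation clause of the first theorem does not directly apply. A correct argument is intrinsic to $B(G)$: if $\psi$ is a $G$-equivariant affine self-bijection of $\mathcal{S}$ and $\Lambda$ is a vertex of $B(G)$, then $\psi(\Lambda)$ is fixed by the parahoric (indeed maximal compact) $\stab_G(\Lambda)$, whose fixed-point set in $B(G)$ is the single point $\Lambda$ by Bruhat--Tits theory; since vertices affinely span, $\psi=\mathrm{id}$. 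Alternatively, one can appeal directly to \cite[3.3]{broussousStevens:09}, which is exactly the uniqueness statement. The rest of your outline — the identity $(gM)^{\#,h}=\sigma(g)^{-1}M^{\#,h}$, the twist-intertwining, and the computation $(\Lambda+s)^{\#,h}=\Lambda^{\#,h}-s$ forcing $s=0$ — is correct.
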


In this article we always consider points of $\building(G)$ as self-dual $o_D$-lattice functions. 

\section{Extending Hermitian forms}\label{secExtHermForms}
Suppose we are given a finite field extension $E|k$ and a field automorphism $\sigma'$ of order at most two on $E$ which 
extends $\rho|_k.$ We consider the $E$-algebra $\tens{E}{k}{D}$ together with the involution $\tens{\sigma'}{k}{\rho}$, 
and in the manner of Broussous and Stevens given in \cite{broussousStevens:09} we 
fix a map $\lambda$ from $E$ to $k$ which 
\begin{equation}\label{assLambda}
\text{is non-zero, $k$-linear and $\sigma'$-$\sigma$-equivariant, and which satisfies }
\mf{p}_{E_0}=\{e\in E_0\mid\ \lambda(e o_{E_0})\subseteq\mf{p}_{k_0}\},
\end{equation}
where $E_0$ is the fixed field of $\sigma'$ in $E$. 
 We extend $\lambda$ to a map $\tilde{\lambda}$ from $\tens{E}{k}{D}$ to $D$ via
\[\tilde{\lambda}(\tens{x}{k}{y}):=\lambda(x)y.\]
%********************************Seite 8**************************************

Let us now choose a suitable $E$-structure on $V$ and assume that there is a $\sigma'$-$\sigma$-equivariant $k$-algebra homomorphism $\phi$ from $E$ to $\End_D(V)$.

We have that $V$ is in a canonical way a right-$\tens{E}{k}{D}$-module using $\phi$. We now define a Hermitian structure on that module. 

\begin{proposition}\label{propExtendHermForms}
There is a unique bi-additive map $h^{\phi}$ from $V\times V$ to $\tens{E}{k}{D}$ such that 
\begin{enumerate}
 \item $h^{\phi}(va,wb)=(\tens{\sigma'}{k}{\rho})(a) h^{\phi}(v,w)b$ and $(\tens{\sigma'}{k}{\rho})(h^{\phi}(v,w))=\epsilon h^{\phi}(w,v)$, for all $a,b\in\tens{E}{k}{D}$ and $v,w\in V$, and
 \item $\tilde{\lambda}\circ h^\phi=h.$
 \end{enumerate}
Assume we are given a second $\sigma'$-$\sigma$-equivariant $k$-algebra homomorphism $\phi'$ from $E$ into $\End_D(V)$. Then, $\phi\circ\phi'^{-1}$ 
can be described as a conjugation by an element of $G$ if and only if $(V,h^{\phi})$ is isomorphic to $(V,h^{\phi'})$ as signed Hermitian modules.
\end{proposition}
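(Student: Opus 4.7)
The plan is to settle existence and uniqueness of $h^{\phi}$ first and then, via that uniqueness, deduce the characterisation of the $G$-conjugacy of $\phi$ and $\phi'$. For uniqueness I would note that (\ref{assLambda}) implies that the $k$-bilinear pairing $(x,y)\mapsto \lambda(xy)$ on $E$ is non-degenerate; from this (and a $k$-basis argument for $D$) the $k$-linear map
\[\Phi:\tens{E}{k}{D}\longrightarrow\Hom_k(E,D),\quad \alpha\mapsto\bigl(e\mapsto\tilde{\lambda}((e\otimes 1)\alpha)\bigr),\]
is injective, and a dimension count promotes it to a bijection. Combining the two listed properties of $h^{\phi}$, for any $v,w\in V$ and $e\in E$,
\[\tilde{\lambda}\bigl((e\otimes 1)h^{\phi}(v,w)\bigr)=\tilde{\lambda}\bigl(h^{\phi}(v(\sigma'(e)\otimes 1),w)\bigr)=h(\phi(\sigma'(e))v,w).\]
Thus $\Phi(h^{\phi}(v,w))$ is the functional $e\mapsto h(\phi(\sigma'(e))v,w)$, and this determines $h^{\phi}(v,w)$ uniquely.

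For existence I would define $h^{\phi}(v,w)$ as $\Phi^{-1}$ applied to exactly that functional. Bi-additivity is inherited from $h$, and the second listed property follows from the definition by setting $e=1$. Sesquilinearity in the first argument is verified by a short check via $\Phi$, and the Hermitian symmetry $(\tens{\sigma'}{k}{\rho})(h^{\phi}(v,w))=\epsilon h^{\phi}(w,v)$ is also checked by applying $\Phi$: the $\sigma'$-$\sigma$-equivariance of $\lambda$ rewrites its left-hand side in terms of $\rho(h(\phi(e)v,w))$, while its right-hand side rewrites via $\sigma\circ\phi=\phi\circ\sigma'$ in terms of $\epsilon h(\phi(\sigma'(e))w,v)$, and the two agree by the $\epsilon$-Hermitian property of $h$. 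Sesquilinearity in the second argument then follows formally from sesquilinearity in the first, the Hermitian symmetry, and the anti-multiplicativity of $\tens{\sigma'}{k}{\rho}$.

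For the characterisation of $G$-conjugacy, assume first that $\phi(e)=g\phi'(e)g^{-1}$ for some $g\in G$. Then $H(v,w):=h^{\phi}(gv,gw)$ is bi-additive, Hermitian and sesquilinear with respect to the $\phi'$-structure on $V$ (using $\phi(e)g=g\phi'(e)$), and $\tilde{\lambda}\circ H=h(g\cdot,g\cdot)=h$ since $g\in G$. Uniqueness of $h^{\phi'}$ forces $H=h^{\phi'}$, so $g:(V,h^{\phi'})\to(V,h^{\phi})$ is an isomorphism of signed Hermitian modules. Conversely, any such isomorphism $g$ is $\tens{E}{k}{D}$-linear, hence $D$-linear and $E$-intertwining (which is exactly $\phi(e)=g\phi'(e)g^{-1}$), and preserves $h^{\phi}$; applying $\tilde{\lambda}$ to that relation gives $h(gv,gw)=h(v,w)$, so $g\in G$. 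The main obstacle is the Hermitian-symmetry verification in the construction of $h^{\phi}$: the computation itself is routine, but it requires keeping track simultaneously of $\sigma$, $\sigma'$, $\rho$, $\epsilon$, and the anti-automorphism $\tens{\sigma'}{k}{\rho}$ of $\tens{E}{k}{D}$.
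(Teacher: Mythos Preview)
Your argument is correct and self-contained. The paper does not actually spell out a proof: it refers to \cite[5.2]{broussousStevens:09} and remarks that the argument there goes through for general $D$ because $\Hom_{E\otimes_k D}(V,E\otimes_k D)$ and $\Hom_D(V,D)$ have the same $k$-dimension (using the theory of semisimple modules when $E\otimes_k D$ is not a skew-field), and it declares the second part ``trivial''. Your route is the same idea executed pointwise rather than globally: instead of comparing the $k$-dimensions of the two spaces of module homomorphisms, you invert the map $\Phi:E\otimes_k D\to\Hom_k(E,D)$ directly, using only the non-degeneracy of the pairing $(x,y)\mapsto\lambda(xy)$ on $E$ together with a $k$-basis of $D$. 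This is slightly more elementary, since it bypasses the semisimple-module argument and works uniformly whether or not $E\otimes_k D$ is a skew-field; the paper's route has the virtue of reducing in one line to the already-established case $D=k$. Your treatment of the $G$-conjugacy characterisation via uniqueness is exactly the ``trivial'' argument the paper leaves to the reader.
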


For the proof see \cite[5.2]{broussousStevens:09}. The proof also is valid if $\tens{E}{k}{D}$ is not a skew-field, because the theory of semisimple modules also implies 
that $\Hom_{\tens{E}{k}{D}}(V,\tens{E}{k}{D})$ and $\Hom_D(V,D)$ have the same $k$-dimension. The second part is trivial and can be left to the reader. 

For later purposes, we want to show that the passage from $h$ to $h^\phi$ respects the duality for lattice functions if $\tens{E}{k}{D}$ is a skew-field,
(see \cite[5.5]{broussousStevens:09} for $D=k$). So, let us assume for the rest of this section that $\tens{E}{k}{D}$ is a skew-field  and denote it by $\Delta$.
Recall that this is equivalent to $[E:k]$ being odd.  
To study $o_\Delta$ we fix prime elements $\pi_E$ and  $\pi_D$ of $E$ and $D$, respectively. 

\begin{lemma}\label{lemValONTensProd}
We abbreviate $e:=e(E|k)$ and $f:=f(E|k)$, and we extend the valuation $\nu$ on $k$ to a valuation, also denoted by $\nu$, on 
$o_{\Delta}$ and $\nu(t)$ it is equal to 
\[(\tens{\nu}{k}{\nu})(t):=\sup\{\inf\{\nu(e_i)+\nu(d_i)\mid\ i\in I\}\mid
 \sum_{i\in I}\tens{e_i}{k}{d_i}=t\},\]
for all $t\in\Delta$. 
Further we have:
\be
\item[(i)] The element \[\pi_{\Delta}:=\tens{\pi_E^{\frac{1-e}{2}}}{k}{\pi_D}\]
is a prime element of $\Delta.$ 
\item[(ii)] The valuation ring of $\Delta$ is 
\[\tens{o_E}{o_k}{o_D}+\tens{\mf{p}_E^{\frac{1-e}{2}}}{o_k}{\mf{p}_D}.\]
\item[(iii)] The valuation ideal of $\Delta$ is 
\[\tens{\mf{p}_E}{o_k}{o_D}+\tens{\mf{p}_E^{\frac{1-e}{2}}}{o_k}{\mf{p}_D}.\]
\ee
\end{lemma}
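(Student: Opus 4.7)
The plan is to exploit the uniqueness of the extension of $\nu$ to a finite-dimensional division algebra over the complete field $k$. The case $d=1$ is trivial since then $\Delta=E$, so I assume $d=2$; the hypothesis that $\tens{E}{k}{D}$ is a skew-field forces $[E:k]$ odd (otherwise $D$ splits over $E$), so both $e$ and $f$ are odd. Since $\dim_E\Delta=\dim_kD=4$ and $\Delta$ has center $E$, $\Delta$ is central of index two over $E$; hence $e(\Delta|E)=f(\Delta|E)=2$ and consequently $e(\Delta|k)=2e$, $f(\Delta|k)=2f$. Thus the unique extension $\nu$ on $\Delta$ has value group $\frac{1}{2e}\bbZ$, and the direct computation $\nu(\pi_\Delta)=\frac{1-e}{2e}+\frac12=\frac{1}{2e}$ shows $\pi_\Delta$ is a uniformizer, proving (i).

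For (ii), I bring in the cyclic presentation of $D$: fix the unramified quadratic subfield $L|k$ of $D$ and write $D=L\oplus L\pi_D$ with $\pi_D^2\in\pi_k o_L^\times$ and $\pi_D$ acting on $L$ by the nontrivial element of $\Gal(L|k)$. Because $f$ is coprime to $[L:k]=2$, $M:=\tens{E}{k}{L}$ is the unramified quadratic extension of $E$ with $o_M=\tens{o_E}{o_k}{o_L}$; setting $\pi:=1\otimes\pi_D$, I obtain the cyclic decomposition $\Delta=M\oplus M\pi$ as a degree-two cyclic algebra over $E$. The valuation ring of $\Delta$ consists of the $m+m'\pi$ with $\nu(m)\geq 0$ and $\nu(m')+\frac12\geq 0$; since $\nu(M^\times)=\frac{1}{e}\bbZ$ and $e$ is odd, the latter reads $m'\in\pi_E^{(1-e)/2}o_M$, giving $o_\Delta=o_M+\pi_E^{(1-e)/2}o_M\pi$. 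Translating back to tensor notation and comparing with the formula in (ii), the missing summands absorb: using $\mf{p}_D=\mf{p}_L+o_L\pi_D$ one checks $\tens{\pi_E^{(1-e)/2}o_E}{o_k}{\mf{p}_L}=\tens{\pi_E^{(1+e)/2}o_E}{o_k}{o_L}\subseteq\tens{o_E}{o_k}{o_L}$ because $(1+e)/2\geq 1$.

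Part (iii) is a one-line consequence of (ii) by multiplying on the left by $\pi_\Delta$: using $\mf{p}_D^2=\mf{p}_k o_D$, the cross-term $\tens{\pi_E^{1-e}o_E}{o_k}{\mf{p}_D^2}$ collapses to $\tens{\mf{p}_E}{o_k}{o_D}$, and the other term gives $\tens{\pi_E^{(1-e)/2}o_E}{o_k}{\mf{p}_D}$. Finally the sup-inf formula follows from (ii): the inequality $\nu(t)\geq(\nu\otimes\nu)(t)$ is the ultrametric property applied to any representation, and conversely (ii) produces, for each $u\in o_\Delta^\times$, a representation with $\min_i(\nu(e_i)+\nu(d_i))=0=\nu(u)$, while for an arbitrary $t=\pi_\Delta^n u$ one scales this representation by $\pi_\Delta^n$. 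The main obstacle is the careful bookkeeping of these absorption identities matching the cyclic-algebra description of $o_\Delta$ to the claimed tensor-product expression, in particular choosing the correct integer ceiling $(1-e)/2=\lceil -e/2\rceil$.
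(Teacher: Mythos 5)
Your proof is correct, but it takes a genuinely different route from the paper's. The paper argues via residue fields: it picks $\kappa_k$-bases $(\bar a_1,\bar a_2)$ of $\kappa_D$ and $(\bar b_1,\ldots,\bar b_f)$ of $\kappa_E$, observes (using $f$ odd) that the products give a $\kappa_k$-basis of $\kappa_\Delta$, and then claims that $(\pi_\Delta^l a_i b_j)$ is a $\nu$-splitting basis of $o_\Delta|o_k$, from which (ii) drops out; for the sup-inf formula it invokes the Chinese remainder theorem together with formula (21) of Bruhat--Tits III. You instead write $\Delta$ as a cyclic algebra $M\oplus M\pi$ over its center $E$, where $M=\tens{E}{k}{L}$ is the unramified quadratic extension of $E$ (using both $e$ and $f$ odd here, not just $f$), read off $o_\Delta=o_M+\pi_E^{(1-e)/2}o_M\pi$ from the valuation constraints, and then show by a direct absorption computation (using $\mf{p}_D=\mf{p}_L+o_L\pi_D$ and $\mf{p}_D^2=\mf{p}_k o_D$) that this agrees with the tensor-product expressions in (ii) and (iii). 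Your treatment of the sup-inf formula is also self-contained: one inequality is the ultrametric inequality applied to any representation, and the reverse follows because (ii) hands you, for each $t=\pi_\Delta^n u$, an explicit representation whose infimum equals $\nu(t)$, with no appeal to the external reference. The trade-off is that the paper's $\nu$-splitting-basis argument stays closer to the lattice-function machinery used throughout the article, while your cyclic-algebra route is more elementary and makes the odd-$e$/odd-$f$ hypothesis do visible work at each step. One small formal caveat: the lemma's statement fixes arbitrary uniformizers $\pi_E$, $\pi_D$ in advance, whereas your cyclic decomposition requires $\pi_D$ to normalize $L$ with $\pi_D^2\in\pi_k o_L^\times$; you should note that the conclusions of (i)--(iii) are insensitive to the choice of uniformizers (since $\mf{p}_E^{(1-e)/2}$ and $\mf{p}_D$ are), so proving them for your preferred $\pi_D$ suffices.
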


\begin{proof}
We only need to consider the case $D\neq k$ because the other is trivial. 
We choose elements $a_1$, $a_2$ of $o_D$ and $b_i$, $1\leq i\leq f$, of $o_E$ such that $(\bar{a}_1,\bar{a}_2)$ is a $\kappa_k$-basis of $\kappa_D$ and 
$(\bar{b}_1,\ldots,\bar{b}_f)$ a $\kappa_k$-basis of $\kappa_E$. 
Because of $f$ is odd, the tuple of products
\[(\bar{a}_i\bar{b}_j)_{i,j}\]
is a $\kappa_k$-basis of $\kappa_{\Delta}$.
The element $\pi_{\Delta}$ is a prime element of $\Delta$, because
\[\nu(\pi_{\Delta})=\frac{1-e}{2e}+\frac{1}{2}=\frac{1}{2e},\]
and the tuple
\[(\pi_{\Delta}^la_ib_j)_{0\leq l\leq 2e-1, 1\leq i\leq 2, 1\leq j\leq f}\]
is a $\nu$-splitting basis of $o_{\Delta}|o_k$. From this follows easily assertion (ii), and we get (iii) from 
\beq\label{eqNewCondOnLambda}
\pi_{\Delta}(\tens{o_E}{o_k}{o_D})=\tens{\mf{p}_E^{\frac{1-e}{2}}}{o_k}{\mf{p}_D}
\eeq
and 
\[\pi_{\Delta}(\tens{\mf{p}_E^{\frac{1-e}{2}}}{o_k}{\mf{p}_D})=\pi_k\tens{\mf{p}_E^{1-e}}{o_k}{o_D}=\tens{\mf{p}_E}{o_k}{o_D}.\]
By the Chinese reminder theorem the valuations 
\[\nu(\tens{\pi_E^{l_1}}{k}{\pi_D^{l_2}}),\ l_1\in\{0,\ldots,e-1\},\ l_2\in\{0,1\}\]
form a system of representatives of $\mathbb{Z}/(2e\mathbb{Z})$. And thus by 
\cite[(21)]{bruhatTitsIII:84} the valuation $\nu$ on $\Delta$ is as in the statement of the lemma.
\end{proof}

%**********************************Seite 9 Mitte*************************8

\begin{proposition}[Analogous to  \cite{broussousStevens:09},~5.5]\label{prop5_5brousStevForDneqkSkewField}
Let $\phi$ be a $k$-algebra monomorphism from $E$ to $\End_D(V)$. Let $M$ be an $o_D$-lattice of $V$ and assume $M$ to be an
$o_E$-lattice via $\phi$, i.e. an $o_\Delta$-lattice. We then have 
\[M^{\#,h}=M^{\#,h^{\phi}}.\]
\end{proposition}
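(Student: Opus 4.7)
The strategy is to reduce the claimed equality of duals to a single algebraic identity about $\tilde{\lambda}$ and then verify that identity using the explicit descriptions of $o_\Delta$ and $\mf{p}_\Delta$ given in Lemma \ref{lemValONTensProd}.

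First I would reformulate both conditions on an element $v\in V$. Since $M$ is an $o_\Delta$-lattice and $h^\phi$ is right sesquilinear, for every $m\in M$ and every $\delta\in o_\Delta$ one has
\[h(v,m\delta)=\tilde{\lambda}\bigl(h^\phi(v,m\delta)\bigr)=\tilde{\lambda}\bigl(h^\phi(v,m)\,\delta\bigr).\]
Hence $v\in M^{\#,h}$ if and only if $\tilde{\lambda}(h^\phi(v,m)\delta)\in\mf{p}_D$ for all $m\in M$ and all $\delta\in o_\Delta$, while $v\in M^{\#,h^\phi}$ if and only if $h^\phi(v,m)\in\mf{p}_\Delta$ for all $m\in M$. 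Letting $x$ range over the possible values of $h^\phi(v,m)$, the whole proposition is seen to be equivalent to the single identity
\[\mf{p}_\Delta=\{x\in\Delta\mid\tilde{\lambda}(x\,o_\Delta)\subseteq\mf{p}_D\}.\]

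Next I would prove this identity in two steps. For the inclusion $\subseteq$, since $\mf{p}_\Delta$ is a two-sided ideal of $o_\Delta$ it suffices to check $\tilde{\lambda}(\mf{p}_\Delta)\subseteq\mf{p}_D$, and via the sum decomposition in Lemma \ref{lemValONTensProd}(iii) this reduces to the estimates $\lambda(\mf{p}_E)\subseteq\mf{p}_k$ and $\lambda(\mf{p}_E^{\frac{1-e}{2}})\,\mf{p}_D\subseteq\mf{p}_D$; both follow from the hypothesis on $\lambda$ once one upgrades its given formulation over $E_0|k_0$ to the analogous statement over $E|k$, which is legitimate because $E$ is either equal to $E_0$ or a quadratic $\sigma'$-extension of it and $\lambda$ is $\sigma'$-$\sigma$-equivariant. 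For the inclusion $\supseteq$, I would argue contrapositively: take $x\in\Delta\setminus\mf{p}_\Delta$, multiply by a suitable power of $\pi_\Delta$ to reduce to $x\in o_\Delta^\times$, and then exhibit a $\delta\in o_\Delta$ with $\tilde{\lambda}(x\delta)\notin\mf{p}_D$ by expanding $x$ in the $\nu$-splitting basis $(\pi_\Delta^la_ib_j)$ from the proof of Lemma \ref{lemValONTensProd} and invoking the non-degeneracy of $\lambda$ on $o_E/\mf{p}_E$ that is encoded in the condition characterizing $\mf{p}_{E_0}$.

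The main obstacle is this second step, and in particular the transfer of the condition from the fixed-field level $E_0|k_0$ to the required non-degeneracy of $\tilde{\lambda}$ on $o_\Delta/\mf{p}_\Delta$: this involves sliding the $D$-factor through $\tilde{\lambda}$ (which is possible because $\lambda$ takes values in the center $k$, so $\tilde{\lambda}$ is both left and right $D$-linear) and tracking the filtrations via the explicit shape of $o_\Delta$ recorded in Lemma \ref{lemValONTensProd}. Once the auxiliary identity is established, the argument closes and parallels the proof of \cite[5.5]{broussousStevens:09}, with the only genuinely new ingredient being the bookkeeping for the division-algebra factor $D$.
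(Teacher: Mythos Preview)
Your reduction to the identity $\mf{p}_\Delta=\{x\in\Delta\mid\tilde{\lambda}(x\,o_\Delta)\subseteq\mf{p}_D\}$ and your treatment of the inclusion $\subseteq$ via Lemma~\ref{lemValONTensProd}(iii) are exactly what the paper does. The only difference is in the reverse inclusion: you propose to reduce to units and expand in the $\nu$-splitting basis, whereas the paper dispatches it in one line by observing that the right-hand side is an $o_\Delta$-submodule of $\Delta$ which, by (\ref{assLambda}), does not contain $1_\Delta$; since every $o_\Delta$-submodule of $\Delta$ is a power of $\mf{p}_\Delta$, it must therefore lie inside $\mf{p}_\Delta$. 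Your basis-expansion argument would work, but note that once you have reduced to $x\in o_\Delta^\times$ you already have $x\,o_\Delta=o_\Delta$, so all you actually need is $\tilde{\lambda}(o_\Delta)\not\subseteq\mf{p}_D$, which is immediate from (\ref{assLambda}); the splitting basis is not needed. In short, your approach is correct and parallel to the paper's, just with an unnecessarily elaborate plan for the easier half of the key identity.
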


\begin{proof}
First we show
\[\mf{p}_{\Delta}=\{x\in\Delta\mid\ \tilde{\lambda}(xo_{\Delta})\subseteq \mf{p}_D\}.\]

The inclusion $\subseteq$ follows from statement [(iii)] of Lemma \ref{lemValONTensProd},
and the equality (\ref{assLambda}), more precisely:
\[\tilde{\lambda}(\pi_\Delta)=\lambda(o_E)\mf{p}_D+\lambda(\mf{p}_E^{\frac{1-e}{2}})\mf{p}_D\subseteq \mf{p}_D.\]
The other inclusion $\supseteq$ follows because the $o_{\Delta}$-module on the right hand side does not contain $1_{\Delta}$ by (\ref{assLambda}). 

Now we repeat the argument of \cite[5.5]{broussousStevens:09}. 
\begin{eqnarray*}
M^{\#,h}&=& \{v\in V\mid\ h(v,M)\subseteq \mf{p}_D\}\\
&=& \{v\in V\mid\ \tilde{\lambda}(h^\phi(v,M))\subseteq \mf{p}_D\}\\
&=& \{v\in V\mid\ h^\phi(v,M)\subseteq \mf{p}_{\Delta}\}\\
&=& M^{\#,h^{\phi}}.
\end{eqnarray*}
\end{proof}

\section{Skolem--Noether for $U(h)$ and conjugate embeddings}
Here we analyze when two points of $\building(G)$ have the same type.  Afterwards we prove Theorem \ref{thmMainclassicalEmbeddings}. 

\begin{proposition}[\cite{skodlerack:10},~3.27]\label{propJEClassocalGroup}
Let $E|k$ be a $\sigma$-invariant field extension in $\End_D(V)$. Then the group $G_E$ is a unitary group, more precisely it coincides with 
$\U(\sigma|_ {\End_{\tens{E}{k}{D}}(V)})$. The map $j_E$ is $()^\#$-equivariant, no matter which signed Hermitian form we choose for $G_E$, and 
$j_E(\building(G)\cap \building_{red}(\tilde{G})^{E^\times})$ is equal to $\building(G_E)$.
\end{proposition}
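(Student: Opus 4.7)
The plan is to reduce all three assertions to the defining property of $j_E$ together with the observation that $\sigma$ induces a well-defined dualization on both reduced buildings. For the first assertion, since $E$ is $\sigma$-invariant, $\sigma$ preserves the centralizer $A_E:=\End_{\tens{E}{k}{D}}(V)$ of $E$ in $A:=\End_D(V)$, so
\[G_E=G\cap A_E^\times=\{g\in A_E^\times\mid g\sigma(g)=\id_V\}=\U(\sigma|_{A_E}).\]
Using the standard correspondence between involutions of central simple algebras and signed Hermitian forms, applied to the Brauer-type decomposition $A_E\cong\End_\Delta(W)$ of Remark~\ref{remCentralizer}, the restriction $\sigma|_{A_E}$ is the adjoint of some signed Hermitian form $h_E$ on $W$, and any two such choices differ by multiplication by a central unit.

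For the $()^\#$-equivariance I would first verify the identity $\sigma(\mf{g}_\Lambda(t))=\mf{g}_{\Lambda^{\#,h}}(t)$ for every lattice function $\Lambda$ and every $t\in\mathbb{R}$ by a short direct computation from the definitions of $\mf{g}_\Lambda$ and of $\Lambda^{\#,h}$. This exhibits the involution that $\sigma$ induces on $\building_{red}(\tilde{G})$ as the dualization $[\Lambda]\mapsto[\Lambda^{\#,h}]$; analogously the involution induced by $\sigma|_{A_E}$ on $\building_{red}(\tilde{G}_E)$ is the dualization with respect to any chosen $h_E$, and on translation classes it is independent of the choice of $h_E$ because competing choices differ by a scalar. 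Applying $\sigma$ to the defining identity
\[\mf{g}_x(t)\cap A_E=\mf{g}_{j_E(x)}(t)\]
and using $\sigma(A_E)=A_E$ then yields the same identity with $x$ replaced by $x^{\#}$ and $j_E(x)$ replaced by $j_E(x)^{\#}$; the uniqueness clause of the Broussous--Lemaire theorem defining $j_E$ forces $j_E(x^{\#})=j_E(x)^{\#}$, which is the asserted equivariance.

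For the final assertion, $\building(G)$ embeds in $\building_{red}(\tilde{G})$ precisely as the fixed-point set of this dualization involution: a translation class is fixed iff some representative can be translated to a self-dual lattice function, which singles out a unique point of $\building(G)$; the analogue holds on the centralizer side for $\building(G_E)$. By the equivariance just established and the fact that $j_E$ is a bijection between $\building_{red}(\tilde{G})^{E^\times}$ and $\building_{red}(\tilde{G}_E)$, fixed points map bijectively to fixed points, giving the claimed equality of images.

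The hard part will be the middle step: making the identification of $\sigma$ with a dualization involution rigorous at the level of the reduced buildings, and verifying that the resulting involution on $\building_{red}(\tilde{G}_E)$ is genuinely independent of the choice of signed Hermitian form $h_E$ on $W$. Once this independence is in hand, the rest reduces to the uniqueness part of the theorem defining $j_E$ and to restricting a bijection to the fixed points of commuting involutions.
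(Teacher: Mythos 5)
Your outline is correct, and the key lemma $\sigma(\mf{g}_\Lambda(t))=\mf{g}_{\Lambda^{\#,h}}(t)$ that you call a ``short direct computation'' does indeed go through: if $a\Lambda(s)\subseteq\Lambda(s+t)$ for all $s$, and $v\in\Lambda((-s)+)^{\#}$, then $h(\sigma(a)v,\Lambda((-s-t)+))=h(v,a\Lambda((-s-t)+))\subseteq h(v,\Lambda((-s)+))\subseteq\mf{p}_D$, so $\sigma(a)\in\mf{g}_{\Lambda^{\#,h}}(t)$; the reverse inclusion follows by applying the same to $\Lambda^{\#}$ and using $\Lambda^{\#\#}=\Lambda$. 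The ``hard part'' you flag is not really hard: any two signed Hermitian forms on $W$ with the same adjoint involution $\sigma|_{A_E}$ differ by multiplication by a unit in the centre of $A_E$, which on lattice functions produces a pure translation of the dual and hence the same map on translation classes, so the induced $\#$ on $\building_{red}(\tilde G_E)$ is unambiguous. With that, your uniqueness argument closes.

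The paper itself does not contain an internal proof of this proposition: it is cited to part one of the proof of Lemma~3.27 in \cite{skodlerack:10}, and the appendix then gives a \emph{concrete} model of $j_E|_{\building(G)\cap\building_{red}(\tilde{G})^{E^\times}}$ by choosing an explicit $W\subseteq V$ and form $h_E=\psi\circ h^{\id_E}|_{W\times W}$ and verifying compatibility with duals case by case (Propositions~\ref{prop5_5brousStevForDneqkSkewField}, \ref{propBLII31}, \ref{propChoiceOfW}), split according to $\Gcd{d}{[E:k]}$ and $\Gcd{d}{f(E|k)}$. Your route is different and, for the bare statement of the proposition, cleaner: you deduce the $\#$-equivariance abstractly from the uniqueness clause in the Broussous--Lemaire theorem, and then the image statement follows by passing to fixed-point sets of commuting involutions. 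What the explicit construction buys --- and why the paper does it --- is the formula for $j_E$ in terms of lattice functions (the $\Lambda\mapsto\Lambda\cap W$ or $\Lambda\mapsto(\Lambda\cap W)-\tfrac14$ dichotomy), which is used downstream in the proof of Proposition~\ref{propMainThmPart1} and Corollary~\ref{corNormalizer}; your abstract argument establishes the proposition but does not yield that explicit description.
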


In the appendix we give a concrete description of $j_E|_{\building(G)\cap \building_{red}(\tilde{G})^{E^\times}}$.
The next proposition gives a criteria for two points in $\building(G)$ to have the same type. 

\begin{proposition}\label{propSimplicialTypeInBGandBredtildeG}
Suppose we are given two elements $x$ and $y$ of $\building(G)$ with corresponding self-dual lattice functions $\Lambda_x$ and $\Lambda_y$, respectively.
Then the following statements are equivalent. 
\be
\item The points $x$ and $y$ have the same type in $\building(G)$.
\item The points $x$ and $y$ have the same type in $\building_{red}(\tilde{G})$.
\item For every real number $t$ the quotient $\Lambda_x(t)/\Lambda_x(t+)$ has the same $\kappa_D$-dimension as $\Lambda_y(t)/\Lambda_y(t+)$.
\item There is an element $g$ of $G$ such that $g.x$ is equal to $y$. 
\ee
\end{proposition}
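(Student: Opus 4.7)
The plan is to prove the cycle $(4)\Rightarrow(3)\Rightarrow(2)\Rightarrow(4)\Rightarrow(1)\Rightarrow(3)$.

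For $(4)\Rightarrow(3)$: any $g\in G\subset\tilde G$ with $g.x=y$ sends the self-dual lattice function $\Lambda_x$ to $\Lambda_y$ and so induces $\kappa_D$-linear isomorphisms between all graded pieces $\Lambda_x(t)/\Lambda_x(t+)$ and $\Lambda_y(t)/\Lambda_y(t+)$. For $(3)\Rightarrow(2)$: the simplicial type of a translation class $[\Lambda]$ in $\building_{red}(\tilde G)$ is determined by the jump dimensions $\dim_{\kappa_D}\Lambda(t)/\Lambda(t+)$ over a fundamental period $[0,\frac{1}{d})$, up to cyclic rotation, so equal dimensions give equal types. For $(4)\Rightarrow(1)$: $G=\U(h)$ acts on $\building(G)$ by type-preserving automorphisms coming from its action on self-dual lattice functions. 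For $(1)\Rightarrow(3)$: the barycentric coordinates of a facet in $\building(G)$ attached to a self-dual lattice function are given exactly by the (normalised) graded dimensions, so matching types give matching dimensions.

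The core step is $(2)\Rightarrow(4)$, which upgrades a $\tilde G$-conjugacy to a $G$-conjugacy. By $(2)$ and strong transitivity of $\SL_D(V)$ on $\building_{red}(\tilde G)$ (Remark \ref{remTypePreserving}(1)) we obtain $\tilde g\in\tilde G$ with $\tilde g\Lambda_x=\Lambda_y$; this need not lie in $G$, so I would correct it via the graded picture. Self-duality provides, for each $t\in\mathbb{R}$, a canonical nondegenerate $\kappa_D$-pairing
\[\Lambda_x(t)/\Lambda_x(t+)\times \Lambda_x(-t)/\Lambda_x((-t)+)\to\kappa_D\]
induced by $h$, which is hyperbolic between distinct pieces when $t\notin\frac{1}{2d}\mathbb{Z}$ and a nondegenerate $\pm\epsilon$-Hermitian form on a single $\kappa_D$-space when $t\in\frac{1}{2d}\mathbb{Z}$; the same structures exist on the $\Lambda_y$-side. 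Matching $\kappa_D$-dimensions, combined with the classification of Hermitian and symmetric bilinear forms over finite fields of odd characteristic, produces an isometry of the associated graded forms, which is then lifted to an element $g\in U(h)=G$ carrying $\Lambda_x$ to $\Lambda_y$ by a filtered Gram--Schmidt and Hensel lifting argument in the spirit of \cite[Section 3]{broussousStevens:09}.

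The main obstacle is this lifting step: it is not enough to isometrise each graded piece in isolation; one must assemble the pieces into a single global isometry of $(V,h)$ respecting the entire lattice filtration simultaneously. The standard device is to peel off hyperbolic pairs level by level via Hensel's lemma, which requires residue characteristic not two, and then to invoke Witt cancellation on the anisotropic kernel. A delicate subpoint is the bookkeeping of discriminant-type invariants on the self-dual levels $t\in\frac{1}{2d}\mathbb{Z}$: one must verify that the dimension condition $(3)$, together with the fact that the global form $h$ is the same on both sides and is nondegenerate, already forces these discriminants to agree between the $\Lambda_x$- and $\Lambda_y$-sides.
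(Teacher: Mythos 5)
Your treatment of the easy implications is fine and in places cleaner than the paper (proving $(3)\Rightarrow(2)$ directly rather than $(2)\Rightarrow(3)$ via a minimality argument on the shift $s$). The genuine issue is the core step, which you label $(2)\Rightarrow(4)$ and the paper proves as $(3)\Rightarrow(4)$, and here your proposed argument has a real gap that you yourself flag but do not close.

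The paper does not argue through graded forms at all. It picks lattice chains $L$, $L'$ representing $\Lambda_x$, $\Lambda_y$, invokes the existence of a single Witt decomposition $\{W_i\}$ of $(V,h)$ splitting \emph{both} chains simultaneously, discards the anisotropic piece $W_0$ (on which $L$ and $L'$ are forced to agree by \cite[2.9]{bruhatTitsIV:87}), decomposes the hyperbolic part as $W^+\oplus W^-$ with $\sigma(W^+)=W^-$, and then, after a purely combinatorial rebalancing of the index sets $\mu(L,j)$ to produce a matching splitting $W'^+\oplus W'^-$ for $L'$, writes down the isometry explicitly as $g=(u,0)+\sigma((u^{-1},0))$ for a $D$-linear $u\colon W^+\to W'^+$ carrying $L^+$ to $L'^+$. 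Because $g$ is built out of a map between totally isotropic pieces and its $\sigma$-adjoint inverse, no residue form ever has to be compared: the construction is automatically an isometry. The three cases in the paper's proof just track the parity of the period and whether $L_0^\#$ is $L_0$ or $L_1$.

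Your graded approach, by contrast, must verify that the residue $\pm\epsilon$-Hermitian forms on the self-dual levels $t\in\frac{1}{2d}\mathbb{Z}$ have not only equal dimension but equal isomorphism class before any Hensel-type lift is possible. You write that ``one must verify'' the discriminant-type invariants agree; that verification is precisely the content of the implication you are trying to prove, and it is not a formality. In the single-jump case one can argue from $\det h$, but once there are several self-dual levels interacting through multiplication by a uniformizer, and especially in the case $D\neq k$ with $\rho$ nontrivial, the bookkeeping is delicate and it is not obvious a priori that the dimensions pin down the residue isometry classes. Even granting it, ``filtered Gram--Schmidt and Hensel lifting'' is left as a gesture: you would need to exhibit an actual lift of a family of graded isometries, indexed by the jump points, to a single element of $\U(h)$ preserving both filtrations, which is exactly the global assembly problem the paper solves by the common Witt decomposition. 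So the proposal identifies the right obstacle but does not overcome it; the paper's route, which you should compare against \cite[2.9]{bruhatTitsIV:87} and the explicit $g=(u,0)+\sigma((u^{-1},0))$ construction, sidesteps the discriminant question entirely.
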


\begin{remark}
There is only one case where $B(G)$ is not $B_{red}(G)$ it is the case of the isotropic orthogonal group over $k$ of rank one.  In this case we just take assertion 4. as definition for two points to have the same type
in $B(G)$.
\end{remark}

%*********************Seite 10**************************************

\begin{proof}
\bi
\item The equivalence of 1. and 4. is general theory of Bruhat--Tits buildings which can be found in \cite{corvallisTits:79}. Statement 2. follows from 4. because the reduced norm of an element $g$ of $G$ is an element of $o_k^\times$, i.e. $g$ acts type-preservingly on $\building_{red}(\tilde{G})$. 
\item We now prove that 3. follows from 2. The group $\SL_D(V)$ acts transitively on points of the same type in $\building_{red}(\tilde{G})$, i.e. there is an element $g$ of $\SL_D(V)$ which sends $x$ to $y$, i.e. in terms of lattice functions there is a real number $s$ such that $g.\Lambda_x$ is equal to $\Lambda_y+s$. Without loss of generality we can assume that $s$ is non-positive, because otherwise we can interchange $x$ and $y$. The self-duality of both lattice functions implies
\[\sigma(g)^{-1}.\Lambda_x=\sigma(g)^{-1}.\Lambda_x^{\#}=(g.\Lambda_x)^{\#}=\Lambda_y^{\#}-s=\Lambda_y-s.\]
Thus $g_1:=\sigma(g)g$ is an element of $\mf{g}_{\Lambda_x}(-2s)\cap\SL_D(V)$. The reduced norm of an element of the radical  $\mf{g}_{\Lambda_x}(0+)$ of 
$\mf{g}_{\Lambda_x}(0)$ is an element of $\mf{p}_k$, i.e. cannot be 1. In particular, $s$ vanishes and we obtain the third assertion.
\item The difficult part of the proposition is that 4. follows from 3. 
We consider the $o_D$-lattice chain $L_*$ corresponding to $\Lambda_x$ such that $L_0=\Lambda_x(0)$ and analogously $L'_*$ for $\Lambda_y$. 
There is a Witt decomposition $\{W_i\mid\ i\in I\}$ of $V$ with respect to $h$ which splits both lattice chains. Without loss of generality we can assume that the anisotropic part $W_0$ of the
Witt decomposition is trivial, because $L$ and $L'$ are equal on $W_0$ by \cite[2.9]{bruhatTitsIV:87}. Let $r$ be the period of $L.$ We choose a decomposition of $I$ into two disjoint sets $I^+$ and $I^-$ such that \[\sigma(I^+)=I^-.\] Let us recall that $\sigma(i)$ is defined to be $i'$ if $\sigma$ sends the projection onto $W_i$ to the projection onto $W_{i'}$.
 Further we define
\[W^+:=\oplus_{i\in I^+}W_i,\ W^-:=\oplus_{i\in I^-}W_i\]
and
\[L^+:=L\cap W^+ ,\ L^-:=L\cap W^-.\]
Let $\mu(L,j)$ be the set of indices $i\in I$ for which $W_i\cap L_j$ differs from $W_i\cap L_{j+1}$.
Analogously we define $\mu(L^+,j)$ and $\mu(L^-,j)$. Caution: one of the latter sets can be empty. 

\textbf{Case 1} ($L_0^{\#}=L_0$ and $2|r$): We choose, for $0\leq j<\frac{r}{2}$, 
 injective maps 
\[\phi^+_j:\ \mu(L^+,j)\ra \mu(L',j),\  \phi^-_j:\ \mu(L^-,j)\ra \mu(L',j)\setminus \im(\phi^+_j).\]
%****************************Seite 11***********************************
Such a choice is possible because $L_j/L_{j+1}$ and $L'_j/L'_ {j+1}$ have the same dimension over $\kappa_D$.
We define 
\[I'^+:=\bigcup_{0\leq j<\frac{r}{2}}(\im(\phi^+_j)\cup\sigma(\im(\phi^-_j))),\]
and we put $I'^-$ to be the complement of $I'^+$ in $I.$
Because
\beq\label{eqDualityCriteriaI}
i\in \mu(L',j)\text{ if and only if } \sigma(i)\in \mu(L',-j-1),
\eeq
for all $i\in I$, we have that $I'^+\cap \sigma(I'^+)$ is empty, and by symmetry 
$I'^-\cap \sigma(I'^-)$ is empty too. Thus 
\[\sigma(I'^+)=I'^-.\]
This new decomposition of $I$ defines   
\[W^{'+}:=\oplus_{i\in I'^+}W_i,\ W^{'-}:=\oplus_{i\in I'^-}W_i,\ L'^+:=L'\cap W^{'+} ,\ L'^-:=L'\cap W^{'-}.\]
By construction, $L'^+$ and $L^+$ are lattice sequences such that $L'^+_i/L'^+_{i+1}$ and  $L^+_i/L^+_{i+1}$ have the same $\kappa_D$-dimension. We choose an isomorphism $u$ of 
 $D$-vector spaces from $W^+$ to $W^{'+}$ such that $uL^+$ is equal to $L'^+$.
The map 
\[g:=(u,0)+\sigma((u^{-1},0)):W^+\oplus W^-\ra W^{'+}\oplus W^{'-}\]
is an element of $G$ and $gL$ is equal to $L'.$ 

\textbf{Case 2} ($L_0^{\#}=L_0$ and $r$ is odd): 
We can construct $W^{'+}$ and $W^{'-}$ as in Case 1, but we have to change the definition of $\phi^+_{\frac{r-1}{2}}$.
Because of (\ref{eqDualityCriteriaI}), the set $\mu(L',\frac{r-1}{2})$ is invariant 
under the action of $\sigma$, i.e. we can choose $\phi^+_{\frac{r-1}{2}}$ such that 
\[\sigma(\im(\phi^+_{\frac{r-1}{2}}))\cap \im(\phi^+_{\frac{r-1}{2}})=\emptyset,\]
because $L_{\frac{r-1}{2}}/L_{\frac{r+1}{2}}$ and $L'_{\frac{r-1}{2}}/L'_{\frac{r+1}{2}}$ have the same $\kappa_D$-dimension.
We now conclude as in Case 1.

\textbf{Case 3} ($L_0^{\#}=L_1$): Unlike the cases before we have
\beq\label{eqDualityCriteriaII}
i\in \mu(L',j)\text{ if and only if } \sigma(i)\in \mu(L',-j),
\eeq
We follow the proof of the cases 1 and 2, but with the following differences:
\be
\item We consider $0\leq j\leq \frac{r}{2}$, i.e. if $r$ is even the index $\frac{r}{2}$ is considered too in all formulas.
\item The set $\mu(L',0)$ is $\sigma$-equivariant. 
\item If $r$ is even the set $\mu(L',\frac{r}{2})$ is $\sigma$-equivariant.
\ee
For the $\sigma$-equivariant sets we apply the procedure of Case 2 for the choice
of the map $\phi_j^+.$ After these preparations we conclude as in Case 1 to finish the proof.
\ei
\end{proof}

We now come to the proof of Theorem \ref{thmMainclassicalEmbeddings}.

%***********************Seite 12****************************************
\begin{proposition}\label{propMainThmPart2}
Let $\phi_1$ and $\phi_2$ be two $k$-algebra monomorphisms from $E$ to $\End_D(V)$ and $x$ be a point of 
$\building(G)\cap\building_{red}(\tilde{G})^{\phi_1(E)^\times}\cap\building_{red}(\tilde{G})^{\phi_2(E)^\times}$. 
Assume that 
\be
\item $\Gcd{f(E|k)}{d}$ divides $e(E|k)$ or 
\item $\frac{m\Gcd{[E:k]}{d}}{[E:k]}$ is odd and $(\phi_1(E),\mf{a}_x)$ and $(\phi_2(E),\mf{a}_x)$ are equivalent embeddings.
\ee
Then there is an element $g$ of the stabilizer of $x$ in $G$ such that
\[\phi_1(x)=g\phi_2(x)g^{-1},\text{ for all }x\in E, \]
if and only if $(V,h^{\phi_1})$ is isomorphic to $(V,h^{\phi_2})$ as signed Hermitian $\tens{E}{k}{D}$-modules. 
\end{proposition}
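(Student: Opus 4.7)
The plan is to reduce the proposition to Theorem~\ref{thmMainclassicalEmbeddings}(i), with Proposition~\ref{propExtendHermForms} serving as the bridge between the Hermitian-module statement and the existence of a conjugating element of $G$.

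The ``only if'' direction is essentially free. If $g\in G$ stabilizes $x$ and satisfies $\phi_1(e)=g\phi_2(e)g^{-1}$ for all $e\in E$, then in particular $g\in G$ realizes $\phi_1\circ\phi_2^{-1}$ as conjugation by an element of $G$, and Proposition~\ref{propExtendHermForms} delivers the isomorphism $(V,h^{\phi_1})\simeq(V,h^{\phi_2})$ of signed Hermitian $\tens{E}{k}{D}$-modules.

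For the ``if'' direction I would proceed in two stages. First, Proposition~\ref{propExtendHermForms} applied to the given isomorphism produces some $g\in G$ with $g\phi_2(e)g^{-1}=\phi_1(e)$ for every $e\in E$, with no a priori control on its action on $x$. Setting $E_i:=\phi_i(E)$, both $E_i$ are $\sigma$-invariant subfields of $\End_D(V)$, and since $\phi_i(E)^\times$ fixes $x$ it normalizes $\mf{a}_x$; thus $(E_1,\mf{a}_x)$ and $(E_2,\mf{a}_x)$ are two $\sigma$-invariant embeddings sharing the common hereditary order $\mf{a}_x$, related by $gE_2g^{-1}=E_1$. The conditions (1) and (2) in the present statement match exactly (a) and (b) of Theorem~\ref{thmMainclassicalEmbeddings}(i) (using $m=\dim_DV$ and noting that the equivalence of embeddings required for case (b) is part of hypothesis (2)). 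Applying that theorem yields $g_1\in G\cap\mf{a}_x$ with $g_1eg_1^{-1}=geg^{-1}$ for all $e\in E_2$, equivalently $g_1\phi_2(e)g_1^{-1}=\phi_1(e)$ for all $e\in E$.

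Second, I must verify that such a $g_1$ actually fixes $x$ in $\building(G)$. Since $\Lambda_x$ is self-dual, the hereditary order $\mf{a}_x=\mf{g}_{\Lambda_x}(0)$ is $\sigma$-invariant: the defining adjunction $h(\sigma(a)v,w)=h(v,aw)$ combined with $\Lambda_x(s)=(\Lambda_x((-s)+))^{\#,h}$ gives $\sigma(a)\Lambda_x(s)\subseteq\Lambda_x(s)$ for every $a\in\mf{a}_x$. Therefore $g_1^{-1}=\sigma(g_1)\in\mf{a}_x$, so $g_1\in\mf{a}_x^\times$, which is the pointwise stabilizer of $\Lambda_x$ in $\tilde G$; intersecting with $G$ yields $\stab_G(x)$. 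The substance is carried by the already established Theorem~\ref{thmMainclassicalEmbeddings}(i); the only remaining tasks are to assemble the inputs and to perform the small $\sigma$-invariance check for $\mf{a}_x$, and I do not foresee any significant obstacle beyond those.
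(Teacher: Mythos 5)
Your argument is circular.  Theorem~\ref{thmMainclassicalEmbeddings}(i) is not an available black box at this point: it is precisely the statement that the paper proves \emph{by means of} Proposition~\ref{propMainThmPart2} together with Proposition~\ref{propExtendHermForms}.  Indeed, the section opens with ``We now come to the proof of Theorem~\ref{thmMainclassicalEmbeddings}'' and then states and proves Propositions~\ref{propMainThmPart2} and~\ref{propMainThmPart1}; the passage from the proposition to the theorem is exactly the translation you perform in your first paragraph (take $\phi_1$ the inclusion of $E_1$, $\phi_2 = g^{-1}(\cdot)g$, invoke Proposition~\ref{propExtendHermForms} to match the hypotheses).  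So invoking the theorem to prove the proposition proves nothing; it just restates it.

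The content you are missing is the chain of type-preservation results.  The actual proof takes the isomorphism $g'\colon (V,h^{\phi_2})\to(V,h^{\phi_1})$, observes that $x$ and $g'.x$ lie in $\building(G)\cap\building_{red}(\tilde G)^{E_1^\times}$ and have the \emph{same type} in $\building(G)$, and then must transport this ``same type'' information down through $j_{E_1}$ to $\building_{red}(\tilde G_{E_1})$.  Under hypothesis~(1) this is Proposition~\ref{propTyppreserving} (the ramification condition on $E|k$ guarantees $j_{E_1}$ preserves simplicial type on all of $\building_{red}(\tilde G)^{E_1^\times}$); under hypothesis~(2) one instead uses Lemma~\ref{lemTypePreservingembeddingType} applied to $j_{E_1}(x)$ and $j_{E_1}(g'.x)$, where the oddness of $m\gcd([E:k],d)/[E:k]$ is what forces uniqueness of the type, and the equivalence of the embeddings feeds in through Remark~\ref{remTypePreserving}.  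Once one knows $j_{E_1}(x)$ and $j_{E_1}(g'.x)$ have the same type in $\building_{red}(\tilde G_{E_1})$, Proposition~\ref{propSimplicialTypeInBGandBredtildeG} (the Witt-decomposition argument) supplies an element $g''\in G_{E_1}$ with $g''g'.x=x$, and $g:=g''g'$ is the desired element.  None of these three results appears in your write-up; they are where the work is, and your reduction to the theorem does not replace them.  Your final $\sigma$-invariance check (that $g_1\in G\cap\mf{a}_x$ forces $g_1\in\mf{a}_x^\times$ and hence $g_1$ fixes $x$) is correct as a standalone observation, but it does not repair the circularity.
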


\begin{proof}
We only have to prove the "if"-part. The other direction is obvious.
We write $E_1$ for $\phi_1(E)$. Let $g'$ be an isomorphism from $(V,h^{\phi_2})$ to $(V,h^{\phi_1})$. Then, $g'$ is an element of $G$ and the points $x$ and $g'.x$ are elements of $\building(G)\cap\building_{red}(\tilde{G})^{E_1^\times}$ of the same type in  $\building(G)$. 
We apply Proposition \ref{propTyppreserving} on $x$ and $g'.x$ in Case 1. and Lemma \ref{lemTypePreservingembeddingType} on $j_{E_1}(x)$ and $j_{E_1}(g'.x)$ in Case 2. We 
conclude that $j_{E_1}(x)$ and $j_{E_1}(g'.x)$ have the same type in $\building_{red}(\tilde{G}_{E_1})$.  
There is an element $g''$ of $G_{E_1}$ such that $g''g'.x=x$ by Proposition \ref{propSimplicialTypeInBGandBredtildeG}. The element $g:=g''g'$ satisfies the desired assertion. 
\end{proof}

As a condition for part two of Theorem \ref{thmMainclassicalEmbeddings} we need a new property.

\begin{definition}\label{defExtensionProperty}
A field extension of non-Archimedean local fields $E|k$ has the {\it extension property with respect to $D$} if the Galois generator $\tau$ of $E_D|k$ can be extended to an
element of $\Aut(E|k)$, the group of $k$-algebra automorphisms of $E$.
\end{definition}

\begin{proposition}\label{propMainThmPart1}
Let $\phi_1$ and $\phi_2$ be two $k$-algebra monomorphisms from $E$ to $\End_D(V)$ and $x$ be a point of 
$\building(G)\cap\building_{red}(\tilde{G})^{\phi_1(E)^\times}\cap\building_{red}(\tilde{G})^{\phi_2(E)^\times}$ such that $(\phi_1(E),\mf{a}_x)$ and $(\phi_2(E),\mf{a}_x)$
are equivalent embeddings. Assume further that $\sigma'$ is non-trivial on $E$ and that $E|k$ has the extension property with respect to $D$. Then there is an element $g$ of the stabilizer of $x$ in $G$ such that
\[\phi_1(E)=g\phi_2(E)g^{-1},\]
 if and only if $(V,h^{\phi_1})$ is isomorphic to $(V,h^{\phi_2})$ as signed Hermitian $\tens{E}{k}{D}$-modules.
\end{proposition}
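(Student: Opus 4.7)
The plan is to mirror the proof of Proposition~\ref{propMainThmPart2}, using the extension property to compensate for the failure of hypothesis (a) or (b) of Theorem~\ref{thmMainclassicalEmbeddings}. The ``only if'' direction is the routine one: given $g\in\stab_G(x)$ with $g\phi_2(E)g^{-1}=\phi_1(E)$, the induced $k$-algebra automorphism $\psi:=\phi_1^{-1}\circ c_g\circ\phi_2$ of $E$ commutes with $\sigma'$ (since $\sigma(g)=g^{-1}$ forces $c_g$ to commute with $\sigma$), so Proposition~\ref{propExtendHermForms} applied to $(\phi_2,\phi_1\circ\psi)$ gives $(V,h^{\phi_2})\cong(V,h^{\phi_1\circ\psi})$, and a Galois twist along $\psi\otimes\id$ identifies $(V,h^{\phi_1\circ\psi})$ with $(V,h^{\phi_1})$.

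For the ``if'' direction, Proposition~\ref{propExtendHermForms} supplies $g'\in G$ with $\phi_1=c_{g'}\circ\phi_2$. Writing $E_1:=\phi_1(E)$, the points $x$ and $g'.x$ both lie in $\building(G)\cap\building_{red}(\tilde{G})^{E_1^\times}$ and share a type in $\building_{red}(\tilde{G})$ by Proposition~\ref{propSimplicialTypeInBGandBredtildeG}. The assumed equivalence of $(\phi_i(E),\mf{a}_x)$ combined with Remark~\ref{remTypePreserving}(2) forces $j_{E_1}(x)$ and $j_{E_1}(g'.x)$ to have barycentric coordinates agreeing up to a rotation of the Coxeter diagram of $\building_{red}(\tilde{G}_{E_1})$. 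Since we are outside the regime of Proposition~\ref{propTyppreserving}, I cannot immediately match the types, but Lemma~\ref{lemTypePreservingembeddingType} applied to the order-two action on the Coxeter diagram induced by $\sigma|_{E_1}$ restricts them to at most two possibilities. If they coincide, Proposition~\ref{propSimplicialTypeInBGandBredtildeG} applied to $G_{E_1}$ (a unitary group by Proposition~\ref{propJEClassocalGroup}) produces $g''\in G_{E_1}$ with $g''g'.x=x$, and $g:=g''g'$ finishes the proof since $g''$ centralizes $E_1$.

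If the two types differ, I invoke the extension property to bridge them. Lift the Galois generator $\tau$ of $E_{1,D}|k$ to an element $\tau'\in\Aut(E_1|k)$ commuting with $\sigma|_{E_1}$, pull back via $\tau_E:=\phi_1^{-1}\tau'\phi_1\in\Aut(E|k)$, and consider the new $\sigma'$-$\sigma$-equivariant monomorphism $\phi_3:=\phi_1\circ\tau_E$ with $\phi_3(E)=E_1$. The twisting argument from the ``only if'' direction gives $(V,h^{\phi_1})\cong(V,h^{\phi_3})$, whence Proposition~\ref{propExtendHermForms} produces $n\in G$ realizing $(\tau')^{-1}$ by conjugation on $E_1$. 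This $n$ normalizes $E_1$ and its $j_{E_1}$-induced action on $\building(G_{E_1})$ is precisely the reflection from Lemma~\ref{lemTypePreservingembeddingType}; hence $ng'.x$ and $x$ share a type in $\building_{red}(\tilde{G}_{E_1})$ and we reduce to the previous sub-case.

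The hard part will be (i) choosing $\tau'$ so that it commutes with $\sigma|_{E_1}$ (the coset of extensions of $\tau$ in $\Aut(E_1|k)$ is stable under $\tau'\mapsto\sigma\tau'\sigma$ because $\sigma|_{E_{1,D}}$ is either trivial or equal to $\tau$, and the odd residue characteristic combined with the order-$2$ structure produces a fixed point), and (ii) verifying that the element $n$ induces the Coxeter-diagram reflection of Lemma~\ref{lemTypePreservingembeddingType}, which is done by tracing the explicit formula for $j_E^1$ in Remark~\ref{remCentralizer} and watching how the Galois swap of idempotents permutes the lattice-quotient dimension data.
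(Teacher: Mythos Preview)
Your overall strategy---reduce to two possible types via Lemma~\ref{lemTypePreservingembeddingType} and, in the bad case, use an element $n\in G$ normalizing $E_1$ to swap them---is different from the paper's and has a genuine gap at the point where you produce $n$.

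You obtain $n$ by asserting $(V,h^{\phi_1})\cong(V,h^{\phi_3})$ with $\phi_3=\phi_1\circ\tau_E$, appealing to ``the twisting argument from the only-if direction''. But that argument, unpacked, says precisely that there exists $n\in G$ with $c_n|_{E_1}=\tau'$; this is not a formal Galois twist but a substantive Skolem--Noether statement for $G$ which you have not established. Proposition~\ref{propExtendHermForms} runs the other way: it \emph{translates} the existence of such an $n$ into the Hermitian-module isomorphism, so invoking it here is circular. (The same issue already appears in your only-if direction: the step $(V,h^{\phi_1\circ\psi})\cong(V,h^{\phi_1})$ amounts to realizing $\psi$ by an element of $G$, which is not automatic.) The paper confronts exactly this difficulty and resolves it by a direct construction: it passes to the explicit decomposition $V=W\oplus W\pi_D$ of Proposition~\ref{propBLII31}(c), builds the two Hermitian spaces $(W,h_E)$ and $(W\pi_D,h'_E)$, shows they are isomorphic under the running hypotheses (Proposition~\ref{propIsomorphicCentralizerSpaces}, which needs $\sigma'\neq\id$ and $\dim_E W$ even), and then extends an isometry between them to the desired element of $G$ via Corollary~\ref{corNormalizer}. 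The extension property enters through Lemma~\ref{lemSpecialLambda}, which produces a lift $\sigma''$ of $\tau$ commuting with $\sigma'$ \emph{and} a compatible $\lambda''$ with $\lambda''\circ\sigma''=\lambda''$; this compatibility is exactly what makes the $D$-linearity and $h$-isometry checks in Corollary~\ref{corNormalizer} go through, and is the ingredient your twist argument is missing.

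Your second hard point (ii)---that $n$ induces the non-trivial rotation of the Coxeter diagram of $\building_{red}(\tilde{G}_{E_1})$---is also not free. Any two such $n$ differ by $G_{E_1}$, so the induced permutation of types is well defined, but verifying it is non-trivial requires precisely the lattice-function bookkeeping of the appendix. The paper sidesteps this entirely: rather than arguing about types, it shows via a minimality argument on the translation parameter $s$ that in the bad case $\Theta_x(t)/\Theta_x(t+)$ and $(\Theta_y(t-\tfrac12)\pi_D)/(\Theta_y((t-\tfrac12)+)\pi_D)$ have matching dimensions, and then Corollary~\ref{corNormalizer} produces an element of $G$ sending $\Lambda_x$ to $\Lambda_y$ directly.
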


For the proof of Proposition \ref{propMainThmPart1} , we will need the following Lemma:

\begin{lemma}\label{lemSpecialLambda}
Assume that $e(E|k)$ is odd and $\Gcd{f(E|k)}{d}$ is even. 
The extension $\sigma''$ of the non-trivial Galois element $\tau$ of $E_D|k$ to $E|k$ can be chosen to commute with $\sigma'$ and additionally in a way such that
there is a map $\lambda''$ which satisfies (\ref{assLambda}) and is $\sigma''$-invariant, i.e. $\lambda''\circ\sigma''$ is $\lambda''$.
\end{lemma}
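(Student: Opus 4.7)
\bproof
My plan is to reduce first and then split into cases based on $f \bmod 4$. Since $k \subseteq E_D$ and $\sigma'|_{E_D} \in \{\id,\tau\}$ fixes $k$, we have $\rho|_k = \id$, so $k = k_0$. Let $F$ denote the Frobenius generator of the cyclic group $\Gal(E^{un}|k)$, where $E^{un}$ is the maximal unramified subextension. I claim $\sigma'|_{E^{un}}$ must be the unique order-two element $F^{f/2}$: picking a uniformizer $\pi_E$ of $E$ with $\pi_E^e = \pi_k$ (possible since $E|E^{un}$ is tame totally ramified of odd degree $e$), the alternative $\sigma'|_{E^{un}} = \id$ would force $\sigma'(\pi_E) = \pm \pi_E$, and $-1 \notin \mu_e(E^{un})$ (since $e$ is odd) would give $\sigma' = \id$, contradicting the nontriviality of $\sigma'$.

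If $f \equiv 2 \pmod 4$, then $F^{f/2}|_{E_D} = \tau$ and $\sigma'$ itself extends $\tau$; I take $\sigma'' := \sigma'$ and $\lambda'' := \lambda$. Commutativity is trivial, and the $\sigma'$-$\rho$-equivariance of $\lambda$ (with $\rho = \id$) is exactly the $\sigma''$-invariance that is required.

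If $4 \mid f$, then $\sigma'|_{E_D} = \id$ and $E_D \subseteq E_0$. Write $\sigma'(\pi_E) = \zeta\pi_E$ with $\zeta \in \mu_e(E^{un})$; the identity $\sigma'^2 = \id$ produces the cocycle $F^{f/2}(\zeta)\zeta = 1$. Pick any odd integer $m$ and try the ansatz $\sigma''|_{E^{un}} := F^m$, $\sigma''(\pi_E) := \eta\pi_E$ for $\eta \in \mu_e(E^{un})$ still to determine. A direct computation reduces the condition $\sigma'\sigma'' = \sigma''\sigma'$ to the single equation $F^{f/2}(\eta)/\eta = F^m(\zeta)/\zeta =: \xi$. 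Using the cocycle on $\zeta$, one checks $\xi \cdot F^{f/2}(\xi) = 1$, so $\xi$ lies in the kernel of $N := 1 + F^{f/2}$ acting on $\mu_e(E^{un})$. Because $e$ is odd, this finite abelian group has odd order, forcing the Tate cohomology $\hat H^{-1}(\langle F^{f/2}\rangle,\mu_e(E^{un}))$ to vanish; hence $\ker N$ equals the image of $F^{f/2} - 1$ and the desired $\eta$ exists. The resulting $\sigma''$ belongs to $\Aut(E|k)$, extends $\tau$, and commutes with $\sigma'$.

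For $\lambda''$ I set $\lambda''(x) := \operatorname{Tr}_{E|k}(x\beta)$ with $\beta$ to be chosen. Since $\operatorname{Tr}_{E|k}$ is automatically invariant under every $k$-automorphism of $E$, the $\sigma''$-invariance of $\lambda''$ reduces to $\beta \in E^{\sigma''}$, while $\sigma'$-$\rho$-equivariance (with $\rho = \id$) requires $\beta \in E_0$. The duality condition in (\ref{assLambda}) prescribes $\nu_{E_0}(\beta) = e - \nu_{E_0}(\mathfrak{d}_{E_0|k}) - 1$, where $\mathfrak{d}_{E_0|k}$ is the different. Inspecting the explicit action of $\sigma''$ on $\pi_E$ produces a uniformizer of $E$ lying in $E_0 \cap E^{\sigma''}$, so this subfield has valuation group $\mathbb{Z}$ in $E$ and contains an element $\beta$ of the prescribed valuation. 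The main obstacle is the Tate-cohomology step producing $\sigma''$; the selection of $\beta$ is then routine.
\eproof
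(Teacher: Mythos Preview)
Your argument has a real gap: you assume throughout that $E|E^{un}$ is tamely ramified, but the hypotheses only give $e(E|k)$ odd, and since the residue characteristic $p$ is itself odd this does not exclude $p\mid e$ (take $p=3$, $e=3$). Without tameness you cannot pick $\pi_E$ with $\pi_E^e\in k$, your parametrisation of $\Aut(E|k)$ by pairs $(F^m,\eta)$ with $\eta\in\mu_e(E^{un})$ in the $4\mid f$ case is unavailable, and your closing claim that $E_0\cap E^{\sigma''}$ contains a uniformizer of $E$ has no evident justification. In the same vein, if $\Char(k)=p$ and $p\mid e$ then $E|k$ need not be separable, so $\operatorname{Tr}_{E|k}$ may vanish identically and your $\lambda''$ collapses to zero. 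A smaller point: the opening derivation of $k=k_0$ from ``$\sigma'|_{E_D}\in\{\id,\tau\}$'' is circular, since the latter already presupposes $\sigma'|_k=\id$; the honest reason is that the quaternion division algebra over a local field carries no involution of the second kind.

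The paper avoids all of this with a much shorter device. Starting from any extension $\psi$ of $\tau$ (whose existence is the standing hypothesis), the $2$-group $\langle\sigma'\rangle$ acts by conjugation on the coset $\psi\,\Aut(E|E_{ur})$; this coset has odd cardinality because $\Aut(E|E_{ur})$ has no element of order $2$ (such an element would cut out an index-$2$ subfield of $E$ over $E_{ur}$, forcing $2\mid e$), so a fixed point $\sigma''$ exists, and replacing it by a suitable odd power makes its order a power of $2$. Then $E_{00}:=(E_0)^{\sigma''}$ has $[E:E_{00}]$ a power of $2$, so $E|E_{00}$ is automatically tame and separable regardless of whether $E|k$ is; setting $\lambda'':=\lambda''_0\circ\trace_{E|E_{00}}$ for any $\lambda''_0$ satisfying (\ref{assLambda}) on $E_{00}$ finishes the job. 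Your Tate-cohomology construction is attractive when $E|k$ happens to be tame and even reproves the extension property there, but as written it does not cover the general statement.
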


\begin{proof}
We denote the residue characteristic of $k$ by $p$ and the maximal unramified field extension of $E|k$ by $E_{ur}$. 
Let $\psi$ be an extension of $\tau$ to $E|k$. The group $<\sigma'>$ acts via conjugation on 
$\psi\Aut(E|E_{ur})$ whose cardinality is odd by assumption. This action has a fixed point which we denote by $\sigma''$. We can assume that 
its order is a power of $2$, because otherwise we take an appropriate odd power of $\sigma''$. Recall that $E_0$ is the fixed field of $\sigma'$ and denote 
the fixed field of $\sigma''$ in $E_0$ by $E_{00}$. Take a map $\lambda''_0$ from $E_{00}$ to $k$ which satisfies (\ref{assLambda}) if we substitute $E$ by $E_{00}$. Then, 
\[\lambda'':=\lambda''_0\circ\trace_{E|E_{00}}\]
has the desired properties because $E|E_{00}$ is separable and tamely ramified. 
\end{proof}

The conclusion of the lemma holds in many more cases, but we do not need this generality for our purposes, since for even ramification index we can use Proposition \ref{propMainThmPart2}.

%**********************Seite 13********************************************
\begin{proof}[of \ref{propMainThmPart1}]
We only need to consider the case that $d$ is two, $e(E|k)$ is odd and $f(E|k)$ and $\frac{m\Gcd{[E:k]}{d}}{[E:k]}$ are even, because all the others are covered by Proposition \ref{propMainThmPart2}.
Every element of $G$ which stabilizes a point of the facet associated to $\mf{a}_x$ fixes the whole facet because $G$ acts type-preservingly on $\building_{red}(\tilde{G})$. Thus we can assume $x$ to be the 
barycenter of $\mf{a}_x$ in $\building_{red}(\tilde{G})$. Fix an isomorphism $g_1$ from $(V,h^{\phi_2})$ to $(V,h^{\phi_1})$. Define $y$ to be $g_1x$. In the remaining part we only work with $(V,h^{\phi_1})$ and $\phi_1(E)$, so without loss of generality let us assume that $\phi_1$ is the identity.
%In the first step we consider the case where $E|k$ is an unramified field extension of degree two. 
We consider the description of $j_E$ given in 
Proposition \ref{propChoiceOfW}, i.e. as a map 
\[j_{E}:\ \Latt{o_D}{V}^{E^\times}\ra \Latt{o_{E}}{W},\]
which has a precise description in terms of self-dual lattice functions  for its restriction to $\building(G)\cap\building_{red}(\tilde{G})^{E^\times}$.
%Let $\Lambda_x$ and $\Lambda_y$ be the $o_D$-lattice function corresponding to $x$ and $y$ respectively.
Let $\Theta_x$ and $\Theta_y$ be the self-dual lattice functions corresponding to $j_{E}(x)$ and $j_{E}(y)$ respectively. The condition on the embedding type forces the existence of an element $g'$ of $\tilde{G}_{E}$ which sends $x$ to $y$ ($x$ and $y$ are barycenters!), i.e. there is a real number $s$ such that $\Theta_y+s$ is equal to $g'.\Theta_x$. We take $g'$ such that $s$ is a minimal non-negative real number with the latter property. Such a minimum exists because if not then the set of points of discontinuity of $\Theta_y$ is dense in $\mathbb{R}$.  
We are done if $s$ is zero by Proposition \ref{propSimplicialTypeInBGandBredtildeG}. Thus assume $s$ to be positive. We have 
that $\sigma(g')g'\Theta_x$ is equal to $\Theta_x+2s$ and the minimality of $s$ shows that there is no positive $t$ smaller than $2s$ together with an element $g''$ of $\tilde{G}_E$ such that $g''\Theta_x$ is equal to $\Theta_x+t$. Thus $2s$ divides $1$, i.e. there is an integer $z$ such that $z2s=1$.

Case $2|z$: Then $2s$ divides $\frac12$ implying that the quotients $\Theta_x(t)/\Theta_x(t+)$ and $\Theta_x(t+\frac12)/\Theta_x((t+\frac12)+)$ are 
$\kappa_k$-isomorphic. The formula in (c) of Proposition \ref{propBLII31} together with Proposition \ref{propSimplicialTypeInBGandBredtildeG} shows that the type of $x$ and $y$ 
in $\building(G)$ determines the type of $j_{E}(x)$ and $j_{E}(y)$ in $\building(G_E)$, respectively, i.e. $j_{E}(x)$ and $j_{E}(y)$ have the same type because $x$ and $y$ have, which
leads to a contradiction to the positivity of $s$.

Case $z$ is odd: The fact that $zs$ is $\frac12$ implies that $\Theta_x(t)/\Theta_x(t+)$ is $\kappa_k$-isomorphic to 
\[(\Theta_y(t-\frac12)\pi_D)/(\Theta_y((t-\frac12)+)\pi_D).\]
Here we take $\lambda''$ of Lemma \ref{lemSpecialLambda} and we take the description of $j_{E}$ from Proposition \ref{propChoiceOfW} with $h^{\id_{E}}$ constructed 
from $\lambda''\circ\phi_1^{-1}$ instead of $\lambda\circ\phi_1^{-1}$. In that proposition and afterwards we construct signed Hermitian spaces $(W,h_{E})$ and $(W\pi_D,h_{E}')$ whose isometry groups are isomorphic to $G_E$. Now, by Proposition \ref{propIsomorphicCentralizerSpaces}  $(W,h_{E})$ is $E$-isomorphic to $(W,h_{E}')$, because $\sigma'$ is non-trivial on $E$ and the $E$-dimension of $W$, which is $\frac{m\Gcd{[E:k]}{d}}{[E:k]}$, is even. Thus, by Proposition \ref{propSimplicialTypeInBGandBredtildeG} there is an isomorphism from $h_E$ to $h_E'$ which maps $\Theta_x$ to $(\Theta_y+\frac12)\pi_D$ and the result follows now from Corollary \ref{corNormalizer}.
\end{proof}

The extension property of Proposition \ref{propMainThmPart1} is necessary, more precisely

\begin{proposition}\label{propExtCondNecessary}
Suppose we are given two embeddings $(E_i,\mf{a})$, $i=1,2$ , with the same self-dual hereditary order and $\sigma$-invariant fields such that 
$E_1$ is conjugate to $E_2$ by an element $\mf{a}\cap G$. 
Then at least one of the following two assertions is true:
\be
\item For every $g\in G$ wich conjugates $E_1$ to $E_2$ there is a $u\in \mf{a}\cap G$ such that for all $x\in E_1$ we have
\[gxg^{-1}=uxu^{-1}.\]
\item There is an automorphism of $E_1|k$ whose restriction to $(E_1)_D$ is non-trivial, in particular $d=2$. 
\ee
\end{proposition}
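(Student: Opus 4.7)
The plan is to argue by contrapositive: assume every $k$-algebra automorphism of $E_1$ is trivial on $(E_1)_D$ (so assertion~2 fails), and deduce assertion~1. Let $g\in G$ be an arbitrary element with $gE_1g^{-1}=E_2$, and let $u_0\in\mf{a}\cap G$ be an element provided by the hypothesis. Then $h:=u_0^{-1}g\in G$ normalizes $E_1$ and induces a $k$-algebra automorphism $\psi$ of $E_1$ via $\psi(x)=hxh^{-1}$. Our assumption forces $\psi|_{(E_1)_D}=\id$, hence $h$ centralizes $(E_1)_D$, so in fact $h\in G_{(E_1)_D}$.

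The key step is to apply Theorem~\ref{thmMainclassicalEmbeddings}(i)(a) once more, this time inside the centralizer. Since $(E_1)_D$ is intrinsic in the $\sigma$-invariant field $E_1$, it is itself $\sigma$-invariant, so $\sigma$ restricts to an involution on $B:=\End_D(V)_{(E_1)_D}\cong\End_\Delta(W)$ (Remark~\ref{remCentralizer}); by Proposition~\ref{propJEClassocalGroup}, $G_{(E_1)_D}$ is a unitary group on $W$. The order $\mf{a}\cap B$ is self-dual, hereditary, and normalized by $E_1^\times$, so the triple $(W,\mf{a}\cap B,E_1)$ is a genuine instance of the theorem's framework over the new base $(E_1)_D$ with skew field $\Delta$ (odd residue characteristic is inherited from $k$ because $(E_1)_D|k$ is unramified; $\ind(\Delta)$ divides $d\leq 2$). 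The crucial arithmetic point is that in this smaller setup condition~(i)(a) is automatic: writing $c:=[(E_1)_D:k]=\Gcd{f(E_1|k)}{d}$, one has $f(E_1|(E_1)_D)=f(E_1|k)/c$ and $\ind(\Delta)=d/c$, so $\Gcd{f(E_1|(E_1)_D)}{\ind(\Delta)}=1$, which trivially divides $e(E_1|(E_1)_D)$.

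Applying the theorem with the coinciding pair of fields $E_1=E_2$ and the element $h\in G_{(E_1)_D}$ yields $v\in\mf{a}\cap G_{(E_1)_D}\subseteq\mf{a}\cap G$ with $vxv^{-1}=hxh^{-1}=\psi(x)$ for all $x\in E_1$. Then $u:=u_0v\in\mf{a}\cap G$ satisfies $uxu^{-1}=u_0\psi(x)u_0^{-1}=gxg^{-1}$ for all $x\in E_1$, establishing assertion~1 for the arbitrary $g$. The main (mild) obstacle is checking that the centralizer data really provide a bona fide instance of the theorem's framework, but this is routine given Proposition~\ref{propJEClassocalGroup} and the inheritance of the standing assumptions noted above; the substantive content is the arithmetic observation which makes the gcd condition vacuous in the centralizer, bypassing any need for the extension property.
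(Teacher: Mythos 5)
Your proof is correct and follows essentially the same route as the paper's: reduce to the centralizer $G_{(E_1)_D}$, where the order becomes $\mf{a}\cap\End_D(V)_{(E_1)_D}$, and invoke Proposition \ref{propMainThmPart2} (i.e.\ Theorem \ref{thmMainclassicalEmbeddings}(i)) there. You argue by contrapositive where the paper argues by contradiction, and you make explicit the arithmetic $\Gcd{f(E_1|(E_1)_D)}{\ind(\Delta)}=1$ that the paper leaves implicit, but the underlying idea is identical.
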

 
\begin{proof}
Assume that there are $g\in G$ and $u\in G\cap\mf{a}$ such that both conjugate $E_1$ to $E_2$ such that the conjugation of $g$ on $E_1$ can not be witnessed by an element 
of $G\cap\mf{a}$. Then $u^{-1}g$ normalizes $E_1$ and it does not fix $(E_1)_D$. Because if it would fix $(E_1)_D$ then we can apply Proposition \ref{propMainThmPart2}
on $(E_i,\mf{b}),\ i=1,2$, and $G_{E_D}$ where $\mf{b}$ is the centralizer of $E_D$ in $\mf{a}$, i.e. we could verify the conjugation of $u^{-1}g$ on $E_1$ by an element of 
$\mf{a}\cap G$. This is a contradition. 
\end{proof}

% We now give a weaker summary of Theorem \ref{thmMainclassicalEmbeddings}
% 
% \begin{proposition}
% Suppose we are given two eqiuvalent embeddings $(E_i,\mf{a})$, $i=1,2$ , with the same self-dual hereditary order and $\sigma$-invariant fields such that 
% $E_1$ is conjugate to $E_2$ by an element $G$. Assume further one of the following conditions:
% \bi
% \item[(a)] $d|e(E|k)$
% \item[(b)]  
% \ei 
% 
% \end{proposition}

%**************************************Seite 14****************************

\section{The extension property}\label{secExtensionProperty}
In this section we want to analyze the etension property of Proposition \ref{propMainThmPart1}.
For that we firstly study a stronger condition.
\subsection{The strong extension property}
We fix a finite field extension $E|k$. Let $e$ be the ramification index and $f$ be the residue class
degree of $E|k$.
 We say that $E|k$ satisfies the {\it strong extension property} if the Frobenius automorphism of the maximal unramified subextension of $E|k$ can be extended to an automorphism of $E|k$. Results concerning the strong extension property were motivated by a communication with P. Schneider. We denote by $E_{ur}|k$ the maximal unramified field extension of $E|k$. 

\begin{proposition}\label{propStrongExtensionProperty}
\bi
\item[(a)]
If $E|k$ is normal or isomorphic to a tensor product of a purely ramified and an unramified extension then $E|k$ satisfies the strong extension condition
\item[(b)]
Let $f$ be prime to $e$. The following assertions are equivalent:
\be
\item $E|k$ satisfies the strong exension property. 
\item $f|\#\Aut(E|k)$.
\item $\Aut(E|k)$ is isomorphic to a semidirect product $\Aut(E|E_{ur})\rtimes \bbZ/(f\bbZ)$. 
\item There is subgroup of $\Aut(E|k)$ of order $f$. 
\item There is a subfield extension $E'|k$ of $E|k$ of degree $e$. 
\item $E$ is $k$-algebra isomorphic to a tensor product of a purely ramified and an unramified exteinsion over $k$. 
\ee
\ei
\end{proposition}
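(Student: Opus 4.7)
For part (a), I would handle the two cases separately. If $E|k$ is normal (hence Galois in the separable setting that is implicit here), the restriction map $r\colon\Gal(E|k)\to\Gal(E_{ur}|k)\cong\mathbb{Z}/f\mathbb{Z}$ has kernel $\Gal(E|E_{ur})$ of order $e$, so its image has order $[E:k]/e=f$; hence $r$ is surjective and Frobenius admits a preimage. If instead $E\cong E'\otimes_k L$ with $E'|k$ totally ramified and $L|k$ unramified, then $L=E_{ur}$ and one defines a lift of Frobenius by taking the identity on the first tensor factor and the Frobenius automorphism of $L|k$ on the second.

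For part (b), the central object is the exact sequence
\[
1\;\longrightarrow\;\Aut(E|E_{ur})\;\longrightarrow\;\Aut(E|k)\;\xrightarrow{\;r\;}\;\Aut(E_{ur}|k)\;\cong\;\mathbb{Z}/f\mathbb{Z},
\]
combined with the observation that $\#\Aut(E|E_{ur})$ divides $e$ and is therefore coprime to $f$. I would establish the equivalence via the cycle (1)$\Leftrightarrow$(2), (1)$\Rightarrow$(3)$\Rightarrow$(4)$\Rightarrow$(5)$\Rightarrow$(6)$\Rightarrow$(1). The implication (1)$\Rightarrow$(2) follows from Lagrange applied to the cyclic subgroup generated by any lift of Frobenius, whose order is a multiple of $f$. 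Conversely (2)$\Rightarrow$(1): the coprimality of $\#\Aut(E|E_{ur})$ and $f$ together with $f\mid\#\Aut(E|k)$ forces $f$ to divide $\#\mathrm{im}(r)$, so $r$ is surjective. For (1)$\Rightarrow$(3) I would invoke Schur--Zassenhaus, which applies precisely because the kernel and the (now full) image of $r$ have coprime orders. The step (3)$\Rightarrow$(4) is trivial, and for (4)$\Rightarrow$(5) I would take $E':=E^H$ for $H\le\Aut(E|k)$ of order $f$; since $E|E^H$ is automatically Galois with group $H$ (as $H$ acts faithfully on $E$), one has $[E:E^H]=f$ and therefore $[E^H:k]=e$.

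The step (5)$\Rightarrow$(6) is the one requiring the most care, and I expect it to be the main obstacle. Given a subextension $E'|k$ of degree $e$, I would use the multiplicativity of $e(\cdot|\cdot)$ and $f(\cdot|\cdot)$ in the tower $k\subset E'\subset E$, together with $\gcd(e,f)=1$, to conclude that $f(E'|k)\mid\gcd(e,f)=1$, hence $E'|k$ is totally ramified of degree $e$; symmetrically $E|E'$ is unramified. Then $E'$ and $E_{ur}$ have coprime degrees over $k$, so $E'\cap E_{ur}=k$ and $E=E'\cdot E_{ur}\cong E'\otimes_k E_{ur}$, yielding (6). Finally (6)$\Rightarrow$(1) is handled exactly as in part (a). The whole argument is essentially a combination of the exact sequence above and elementary divisibility, the delicate point being the simultaneous control of ramification and residue degree in (5)$\Rightarrow$(6).
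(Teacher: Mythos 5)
Your proof is correct. It follows essentially the same route as the paper: the same exact sequence $1 \to \Aut(E|E_{ur}) \to \Aut(E|k) \to \Gal(E_{ur}|k)$, the same coprimality observation that $\#\Aut(E|E_{ur})$ divides $e$ and is therefore prime to $f$, and the same structural identification of $E$ as a compositum of a totally ramified and an unramified piece. The paper proves $3\Rightarrow4\Leftrightarrow5\Leftrightarrow6\Rightarrow2$, then $2\Rightarrow1$ and $1\Rightarrow3$, whereas you run $(1)\Leftrightarrow(2)$ together with the cycle $(1)\Rightarrow(3)\Rightarrow(4)\Rightarrow(5)\Rightarrow(6)\Rightarrow(1)$; this is just a reordering and both closures are complete.

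The one genuine difference is in $(1)\Rightarrow(3)$: you invoke Schur--Zassenhaus, which certainly applies since the kernel and image of the restriction map have coprime orders. The paper instead constructs the complement explicitly: it takes a lift $\phi$ of Frobenius and observes that $\phi^e$ has order exactly $f$ (its restriction $r(\phi)^e$ still generates $\Gal(E_{ur}|k)$ since $\gcd(e,f)=1$, while $\phi^f\in\Aut(E|E_{ur})$ has order dividing $e$, so $\phi^{ef}=1$), giving $\Aut(E|k) = \Aut(E|E_{ur})\rtimes\langle\phi^e\rangle$ by hand. The explicit construction is more elementary and produces the cyclic complement directly rather than via an abstract existence theorem; your route is shorter on the page but imports more machinery. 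Also worth noting: the paper labels the chain $3\Rightarrow4\Leftrightarrow5\Leftrightarrow6\Rightarrow2$ as ``immediate'', while you rightly flag $(5)\Rightarrow(6)$ as the step where the arithmetic of $e(\cdot|\cdot)$ and $f(\cdot|\cdot)$ actually has to be spelled out; your careful version (showing $f(E'|k)$ divides $\gcd(e,f)=1$, then invoking linear disjointness since $E_{ur}|k$ is Galois) is the argument the paper is implicitly relying on.
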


\begin{proof}
We only need to concentrate on (b).
We have immediatly: 
\[3.\Lra 4.\Leqa 5. \Leqa 6. \Lra 2.\]
Let us denote $\Aut(E|E_{ur})$ by $N$. It is  group of order dividing $e$.  

\textbf{2. implies 1.:} The order of $N$ is prime to $f$. Thus the restriction homomorphism from $\Aut(E|k)$ to $\Gal(E_{ur}|k)$ is surjective by 2. And we deduce 1.

\textbf{1. implies 3.:} Take the $e$th power of a lift of the Frobenius automorphism to get an element of order $f$ in $\Aut(E|k)$. So we have a cyclic subgroup $C$ of order 
$f$. The restriction to $E_{ur}$ gives an isomorphism from $C$ to $\Gal(E_{ur}|k)$ because both have the same order which is prime to the order of $N$.
Thus $\Aut(E|k)$ is isomorphic to $N\rtimes C$. 
\end{proof}

We now want to decide whether or not a tamely ramified extension is a tensor product of a purely ramified and  an unramified extension over $k$. 
So let $E|k$ be tamely ramified with $f$ and $e$ not nessecarely coprime.
Define $\tilde{e}:=\Gcd{\frac{\#\kappa_E-1}{\#\kappa-1}}{e}$. We denote by $\mu_{i}(E)$ the set of $i$th roots of unity in $E$. 
\[\mu_{tame}(E|k):=\{x\in\mu_{\#\kappa_E-1}(E)\mid \exists\pi_E\in\mf{p}_E\setminus\mf{p}^2_E:\ \pi_E^e x^{-1}\in o_k\}. \]
Further we define an equivalence relation on $\mu_{\#\kappa_E-1}(E)$ by: $x\sim_{E|k} y$ if and only if $(\frac{x}{y})^{\frac{\#\kappa_E-1}{\tilde{e}}}=1$.
Let $\zeta$ be a primitive $\tilde{e}$th root of unity in $E$. Then an easy exercise shows that $\sim_{E|k}$ is under 
\[\psi:\bbZ/(\#\kappa_E-1)\bbZ\ra \mu_{\#\kappa_E-1}(E),\ \bar{1}\mapsto\zeta\]
the push forward of the equivalence relation given by $\tilde{e}\bbZ/(\#\kappa_E-1)\bbZ$. 

%********************************Seite 15********************************
\begin{proposition}\label{propTamelyTensorProd}
Let $E|k$ be tamely ramified. Then $\mu_{tame}(E|k)$ is non-empty and an equivalence class of $\sim_{E|k}$ and the following conditions are equivalent.
\be
\item $E|k$ is a tensor product of a purely ramified and an unramified field extension over $k$. 
\item $E|k$ contains an $e$th root of some uniformizer of $k$. 
\item $1\in\mu_{tame}(E|k)$.
\item There exists uniformizers $\pi_E$ and $\pi_k$ for $E$ and $k$, respectively, such that 
$(\frac{\pi_E^e}{\pi_k})^{\frac{\#\kappa_E-1}{\tilde{e}}}$ is an element of $ 1+\mf{p}_E$
\item For all uniformizers $\pi_E$ and $\pi_k$ for $E$ and $k$, respectively, the element 
$(\frac{\pi_E^e}{\pi_k})^{\frac{\#\kappa_E-1}{\tilde{e}}}$ lies in $ 1+\mf{p}_E$
\ee
\end{proposition}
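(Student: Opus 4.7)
The plan is to prove the proposition in three stages: first, establish non-emptiness of $\mu_{tame}(E|k)$; second, identify it as a single coset of an appropriate subgroup of $\mu_{\#\kappa_E-1}(E)$, which turns out to be exactly $\ker(z\mapsto z^{(\#\kappa_E-1)/\tilde e})$, hence a $\sim_{E|k}$-class; third, verify the five equivalences. For non-emptiness I would start from any uniformizer $\pi_E$ of $E$ and any uniformizer $\pi_k$ of $k$, decompose the unit $\pi_E^e/\pi_k=\zeta w$ with $\zeta\in\mu_{\#\kappa_E-1}(E)$ its Teichm\"uller lift and $w\in 1+\mf{p}_E$, and then exploit tameness: since $e$ is prime to the residue characteristic, $x\mapsto x^e$ is an automorphism of the pro-$p$ group $1+\mf{p}_E$, so one can solve $v^e=w^{-1}$ with $v\in 1+\mf{p}_E$. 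Replacing $\pi_E$ by $\pi_E v$ then gives $(\pi_E v)^e=\zeta\pi_k$, exhibiting $\zeta\in\mu_{tame}(E|k)$.

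For stage two, abbreviate $N:=\#\kappa_E-1$ and $M:=\#\kappa_k-1$, and fix $\zeta_0\in\mu_{tame}(E|k)$ realised by uniformizers $\pi_{E,0},\pi_{k,0}$. For any further $x\in\mu_{tame}(E|k)$ realised by $\pi_E,\pi_k$, expanding the identity $(\pi_E/\pi_{E,0})^e=(x/\zeta_0)(\pi_k/\pi_{k,0})$ and passing to Teichm\"uller decompositions on both sides forces the $1$-unit contributions to cancel, because their net product is already a root of unity; this places $x/\zeta_0$ in $(\mu_N)^e\cdot\mu_M$. Conversely, any element of the form $\xi^e\xi_k^{-1}$ with $\xi\in\mu_N$, $\xi_k\in\mu_M$ is realised via the modified uniformizers $\pi_{E,0}\xi$ and $\pi_{k,0}\xi_k$. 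A short computation in $\mu_N\cong\bbZ/N\bbZ$, using that $(\mu_N)^e$ corresponds to $\gcd(e,N)\bbZ/N\bbZ$ and $\mu_M$ to $(N/M)\bbZ/N\bbZ$, identifies $(\mu_N)^e\cdot\mu_M$ with $\gcd(e,N/M)\bbZ/N\bbZ=\tilde e\bbZ/N\bbZ$, which is precisely $\ker(z\mapsto z^{N/\tilde e})$. Thus $\mu_{tame}(E|k)=\zeta_0\cdot\ker(z\mapsto z^{N/\tilde e})$, a single $\sim_{E|k}$-class.

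For stage three, the equivalences become short once the coset description is in hand. The step $2\Leftrightarrow 3$ is tautological; $3\Leftrightarrow 4$ reads off the coset description, since $1\in\mu_{tame}(E|k)$ iff the Teichm\"uller lift of $\pi_E^e/\pi_k$ lies in $\ker(z\mapsto z^{N/\tilde e})$; and $4\Leftrightarrow 5$ follows because changing uniformizers from $(\pi_E,\pi_k)$ to $(\pi_E v,\pi_k v_k)$ multiplies $(\pi_E^e/\pi_k)^{N/\tilde e}$ by $(v^e/v_k)^{N/\tilde e}$, whose reduction mod $\mf{p}_E$ is trivial because $\tilde e\mid e$ forces $N\mid eN/\tilde e$ and $\tilde e\mid N/M$ forces $M\mid N/\tilde e$. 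Finally, for $1\Leftrightarrow 2$: if $E\cong E_{ur}\otimes_k k'$ with $k'|k$ totally and tamely ramified of degree $e$, then applying the Hensel correction of stage one inside $k'$ produces a uniformizer of $k'$ whose $e$-th power is a uniformizer of $k$; conversely, given $\pi_E\in E$ with $\pi_E^e$ a uniformizer of $k$, the subfield $k(\pi_E)$ is Eisenstein totally ramified of degree $e$, intersects $E_{ur}$ trivially, and a dimension count yields $E=k(\pi_E)\otimes_k E_{ur}$. The main obstacle is the coset identification in stage two: the cyclic-group bookkeeping must be handled with care, and the cancellation of $1$-unit factors in the Teichm\"uller decomposition relies crucially on tameness.
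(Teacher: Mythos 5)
Your proof is correct and follows essentially the same route as the paper's: Teichm\"uller factorisation and Hensel lifting of the $e$-th power map on $1$-units give non-emptiness, $\mu_{tame}(E|k)$ is identified as a coset of $(\mu_N)^e\cdot\mu_M$ inside $\mu_N$, and cyclic-group arithmetic in $\bbZ/N\bbZ$ turns that subgroup into $\ker(z\mapsto z^{N/\tilde e})=\tilde e\bbZ/N\bbZ$. The differences are mainly expository: the paper outsources non-emptiness and the equivalence $1\Leftrightarrow 2\Leftrightarrow 3$ to \cite[II.5.12]{lang:94} (working through $E_{ur}$), and phrases the subgroup identification via Bezout's lemma, whereas you re-derive these from scratch inside $E$ (for $1\Leftrightarrow 2$, your Eisenstein/linear-disjointness argument is a clean replacement for the citation). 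Your version of $4\Rightarrow 5$ is actually tighter than the paper's: the paper records only $u^{e(\#\kappa_E-1)/\tilde e}\in 1+\mf{p}_E$ for $u\in o_E^\times$, which covers the change of $\pi_E$ but leaves the change of $\pi_k$ implicit; your observation that $\tilde e\mid N/M$ forces $M\mid N/\tilde e$, hence $b^{N/\tilde e}\in 1+\mf{p}_E$ for $b\in o_k^\times$, closes that loose end. One stylistic caution: the clause ``forces the $1$-unit contributions to cancel, because their net product is already a root of unity'' deserves a sentence stating explicitly that you are invoking uniqueness of the Teichm\"uller decomposition of a unit as (root of unity)$\times$($1$-unit), since that is the load-bearing fact.
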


The concreteness of the proposition allows to construct tamely ramified examples where the strong extension condition fails.
It also implies an easy corollary which is a genralization of \cite[II.5.12]{lang:94}. The latter states that if $f$ is equal to $1$ there is some uniformizer of $k$ which has an $e$th root in $E$.

\begin{corollary}\label{corRoots}
Let $E|k$ be a field extension. Suppose that $e'$ is coprime to 
\[\Char(\kappa)\sum_{i=0}^{f-1}(\#\kappa)^i.\] 
Then there is a uniformizer $\pi_k$ of $k$ and an element $\alpha$ of $E$ such that $\alpha^{e'}$ is equal to $\pi_k$.
\end{corollary}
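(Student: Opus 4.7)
The plan is to construct $\alpha$ explicitly by twisting a uniformizer of $E$. First, since $\alpha\in E$ with $\alpha^{e'}=\pi_k$ forces $e'\nu(\alpha)=1$ and $\nu(E^\times)=\frac{1}{e}\mathbb{Z}$, we must have $e'\mid e$; I use this implicitly. Pick any uniformizer $\pi_E$ of $E$ and write $\pi_E^e=\pi_k u$ with $\pi_k$ a uniformizer of $k$ and $u\in o_E^\times$. For any $v\in o_E^\times$, the element $\alpha:=\pi_E^{e/e'}v^{-1}$ of $E$ satisfies $\alpha^{e'}=\pi_k\,u\,v^{-e'}$, which is a uniformizer of $k$ if and only if $uv^{-e'}\in o_k^\times$. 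Hence the problem reduces to showing that the natural map $o_k^\times/(o_k^\times)^{e'}\to o_E^\times/(o_E^\times)^{e'}$ is surjective.

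Next, since $\Gcd{e'}{\Char(\kappa)}=1$, the $e'$th power map is a bijection on the pro-$p$ group $1+\mf{p}_E$ by Hensel's lemma; thus $(1+\mf{p}_E)\subseteq(o_E^\times)^{e'}$, and the analogous statement holds for $k$. Via the decompositions $o_E^\times\cong\mu_{\#\kappa_E-1}(E)\times(1+\mf{p}_E)$ and $o_k^\times\cong\mu_{\#\kappa-1}(k)\times(1+\mf{p}_k)$, the surjectivity question becomes whether $\mu_{\#\kappa-1}(k)\cdot\mu_{\#\kappa_E-1}(E)^{e'}=\mu_{\#\kappa_E-1}(E)$.

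The remaining step, which is the crux, uses the second factor of the hypothesis. Since $\mu_{\#\kappa-1}(k)$ coincides with the full subgroup of $(\#\kappa-1)$th roots of unity inside $\mu_{\#\kappa_E-1}(E)$, the quotient $\mu_{\#\kappa_E-1}(E)/\mu_{\#\kappa-1}(k)$ is cyclic of order $(\#\kappa_E-1)/(\#\kappa-1)=\sum_{i=0}^{f-1}(\#\kappa)^i$. By hypothesis $e'$ is coprime to this number, so the $e'$th power map on that cyclic quotient is a bijection, giving the desired equality. The main obstacle is precisely this last coprimality check on roots of unity: the two prime factors in $\Char(\kappa)\sum_{i=0}^{f-1}(\#\kappa)^i$ are tailored so that the first enables Hensel's lemma on principal units and the second gives surjectivity on the roots-of-unity quotient, after which the rest is routine bookkeeping on units.
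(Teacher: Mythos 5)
Your proof is correct, and it takes a genuinely different, more self-contained route than the paper's. The paper first reduces to the tamely ramified case (this is where coprimality of $e'$ with $\Char(\kappa)$ enters), then invokes Lang, \emph{Algebraic Number Theory} II.5.12, applied to the totally ramified part $E|E_{ur}$ to produce an $e'$th root $x$ of a uniformizer of $E_{ur}$, and finally applies Proposition~\ref{propTamelyTensorProd} to $E_{ur}[x]|k$, using that $e'$ is prime to $(\#\kappa_E-1)/(\#\kappa-1)=\sum_{i=0}^{f-1}(\#\kappa)^i$ to force $\tilde{e}=1$, trivialize $\sim$, and hence obtain condition~2 of that proposition. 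You instead recast the problem as the group-theoretic statement $o_k^\times\cdot(o_E^\times)^{e'}=o_E^\times$ and prove it directly via the Teichm\"uller decomposition: Hensel's lemma (using coprimality with $\Char(\kappa)$) absorbs $1+\mf{p}_E$ into $(o_E^\times)^{e'}$, and the cyclic quotient $\mu_{\#\kappa_E-1}(E)/\mu_{\#\kappa-1}(k)$ of order $\sum_{i=0}^{f-1}(\#\kappa)^i$ handles the Teichm\"uller part. The same two prime factors in the hypothesis thus do the same two jobs in both arguments, but your version bypasses Lang's theorem and the tensor-product machinery of Proposition~\ref{propTamelyTensorProd}, essentially re-deriving the relevant special case of those results directly; the paper's route is terser because it leans on tools already developed in the section. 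You are also right to flag the implicit hypothesis $e'\mid e$, which is needed for the conclusion to be possible at all and is used tacitly by both proofs (the paper needs it so that Lang's result applies to $E|E_{ur}$).
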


For example the assumption of Corollary \ref{corRoots} is satisfied if $e'$ is 
a power of $2$ and $f\Char(\kappa)$ is odd. 

\begin{proof}
Without loss of generality we assume that $E|k$ is tamely ramified because we can turn to the maximal tamely ramified subextension of $E|k$. 
We apply \cite[II.5.12]{lang:94} to find an $e'$th root $x$ of a uniformizer of $E_{ur}$ in $E$. We can now apply Proposition \ref{propTamelyTensorProd}
on $E_{ur}[x]|k$ because $e'$ is prime to $\sum_{i=0}^{f-1}(\#\kappa)^i$ which is $\frac{\#\kappa_E-1}{\#\kappa-1}$.
\end{proof}

\begin{proof}[Prop. \ref{propTamelyTensorProd}] The non-emptiness of $\mu_{tame}(E|k)$ comes from \cite[II.5.12]{lang:94} and the surjectivity of the $e$th power map from $1+\mf{p}_{E_{ur}}$ to $1+\mf{p}_{E_{ur}}$ . 
For the next assertion let $x$ be an element of $\mu_{tame}(E|k)$. Then $y\in\mu_{\#\kappa_E-1}(E)$ is equivalent to $x$ if and only if $x=uy$ for some $u\in\mu_{\frac{\#\kappa_E-1}{\tilde{e}}}(E)$
if and only if $x=u'vy$ for some $u'=u^e,\ u\in\mu_{\#\kappa_E-1}(E)$, and $v\in o_k^\times$. The ''only if''-part is a consequence of Bezout's lemma: 
Let $\zeta$ be a primitive $\#\kappa_E-1$-root of unitiy. By the Henselian lemma: $\zeta\in E_{ur}$. Then $u$ is an $\tilde{e}j$ power of $\zeta$ for some integer $j$. Now Bezout's lemma
states that there are integers $a$ and $b$ such that 
\[a e+b\frac{\#\kappa_E-1}{\#\kappa-1}=\tilde{e}.\]
Now the $\frac{\#\kappa_E-1}{\#\kappa-1}$th power of $\zeta$  is an element of $k$ which proves the ''only if'' part. 
The equivalence of the first three assertions is either trivial or part of \cite[II.5.12]{lang:94}. We have also $5.\Lra 4.$ and $3.\Lra 4.$ From 4. follows 5. because every element $u$ of $o_E^\times$ satisfies $u^{\frac{e(\#\kappa_E-1)}{\tilde{e}}}\in 1+\mf{p}_E$.  We finish the proof in showing that 5. implies 3. Take an element $x$ of $\mu_{tame}(E|k)$. 5. states that $x$ is equivalent to 1 and thus we get 3.
\end{proof}

\subsection{The extension property}
We now combine the statements of Proposition \ref{propTamelyTensorProd} and 
Theorem \ref{thmMainclassicalEmbeddings} to get a statement when two appropriate field extensions are conjugate under $G$. 

\begin{remark}
For a field extension $E|k$ in $\End_D(V)$ the extension property with respect to $D$ is equivalent to the strong extension property of $E|E'$ where $E'|k$ is the maximal unramified subextension of $E|k$ of degree prime to $d$.
\end{remark}

%***************************Seite 16******************************
\begin{theorem}
Assume that $E_1|k$ and $E_2|k$ are two $\sigma$-invariant but not $\sigma$-fixed field extensions in $\End_D(V)$ such that $E_i^\times$  is contained in the normalizer of a self-dual hereditary order $\mf{a}$ and such that there are $g_1\in\mf{n}(\mf{a})$ and $g_2\in G$ which conjugate $E_1$ to $E_2$. Suppose that there is no element of $\mf{a}\cap G$ which conjugates $E_1$ to $E_2$. Then $e(E_1|k)$ is odd and $f(E_1|k)$ and $d$ are even and $E_1|E'$ does not satisfy the strong extension property where $E'$ is the maximal unramified subextension of $E_1|k$ of odd degree. More precisely
\bi
\item[(i)] None of the statements from 1. to 6. in Proposition \ref{propStrongExtensionProperty} hold for $E_1|E'$.
\item[(ii)] None of statements from 1. to 5. in Proposition \ref{propTamelyTensorProd} hold for $E_1|E'$ if $E_1|k$ is tamely ramified.
\ei
\end{theorem}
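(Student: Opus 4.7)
The plan is a contrapositive application of Theorem \ref{thmMainclassicalEmbeddings}. I first check that its hypotheses are in force. The element $g_1\in\mf{n}(\mf{a})$ is in particular a $k$-algebra isomorphism from $E_1$ to $E_2$, so it carries $E_{1,D}$ to $E_{2,D}$ (these being intrinsically determined by the abstract extension type), and hence $(E_1,\mf{a})$ and $(E_2,\mf{a})$ are equivalent embeddings in the sense of Section 2. The point $x:=\bary(\mf{a})$ lies in $\building(G)$ because $\mf{a}$ is self-dual, and it is fixed by both $E_i^\times$; combined with $g_2\in G$ implementing a conjugation from $E_1$ to $E_2$, Theorem \ref{thmMainclassicalEmbeddings} is applicable to this data.

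Now I argue that each sufficient condition of the main theorem must fail. If condition (a) of part (i) held, then part (i) would produce an element of $G\cap\mf{a}$ conjugating $E_1$ to $E_2$, contradicting the standing assumption. Since (a) is automatic for $d=1$, this forces $d=2$, and the failure of (a) then reads as $e(E_1|k)$ odd and $f(E_1|k)$ even. Condition (b) must likewise fail, which places us in the scope of part (ii). The hypothesis $\sigma|_{E_1}\neq \id_{E_1}$ holds by the ``not $\sigma$-fixed'' assumption, and the embeddings are equivalent, so the only remaining sufficient condition of (ii) is the extension property of $E_1|k$ with respect to $D$; if this property held, (ii) would again supply the forbidden element of $G\cap\mf{a}$. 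Hence the extension property with respect to $D$ fails, which by the remark preceding the statement is equivalent to the failure of the strong extension property of $E_1|E'$, where $E'$ is the maximal unramified subextension of $E_1|k$ of degree prime to $d=2$, that is, of odd degree.

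For the refined statements (i) and (ii) I verify the numerical hypothesis of Proposition \ref{propStrongExtensionProperty}(b) for the tower $E_1|E'$: one has $e(E_1|E')=e(E_1|k)$, which is odd, while $f(E_1|E')$ equals the $2$-part of $f(E_1|k)$, hence a power of two; these are coprime. Since statement 1 of Proposition \ref{propStrongExtensionProperty}(b) fails for $E_1|E'$, so do all six equivalent statements, which is (i). If $E_1|k$ is in addition tamely ramified, then so is $E_1|E'$, and statement 6 of Proposition \ref{propStrongExtensionProperty} (the tensor-product characterisation) coincides with statement 1 of Proposition \ref{propTamelyTensorProd}; by the latter proposition the failure of statement 1 is equivalent to the simultaneous failure of statements 1--5 there, yielding (ii). The only mildly delicate point is the extraction of the equivalence of embeddings from the mere existence of $g_1\in\mf{n}(\mf{a})$; beyond that the proof is bookkeeping across the equivalences in Propositions \ref{propStrongExtensionProperty} and \ref{propTamelyTensorProd}, together with the translation supplied by the remark between the extension property with respect to $D$ and the strong extension property of $E_1|E'$.
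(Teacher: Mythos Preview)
Your argument is correct and is precisely the intended one: the paper presents this theorem as the combination of Theorem \ref{thmMainclassicalEmbeddings} with Propositions \ref{propStrongExtensionProperty} and \ref{propTamelyTensorProd} (via the remark translating the extension property with respect to $D$ into the strong extension property of $E_1|E'$), and your contrapositive unpacking together with the verification that $e(E_1|E')$ and $f(E_1|E')$ are coprime is exactly that combination. The only superfluous step is the introduction of the barycenter $x$, since Theorem \ref{thmMainclassicalEmbeddings} is stated directly in terms of $G\cap\mf{a}$ and needs no reference to a point of the building.
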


\begin{appendix}
\section{Embedding of buildings of classical groups}
This appendix is devoted to Proposition \ref{propJEClassocalGroup}. Its proof can be found in part one of the proof of Lemma 3.27 in \cite{skodlerack:10}.
The existence of a signed Hermitian form for $G_E$ can be found in \cite[1.12]{skodlerack:10}. In the latter dissertation the author did not mention the explicit formula for the restriction of 
$j_E$ to $\building(G)\cap \building_{red}(\tilde{G})^{E^\times}$. For this, one needs to construct a signed Hermitian form $h_E$ whose unitary group is $G_E$. 
The rest is stated in \cite{broussousLemaire:02}. We assume the situation of section \ref{secExtHermForms} with 
the restriction that $\phi$ is $\id_E$. In this section we use extensively that the residue characteristic of $k$ is different from $2$. We denote by  $\Lattone{o_D}{V}^{E^\times}$ the set these $o_D$-lattice functions of $V$ which are $o_E$-lattice functions.
Let us recall how $j_E$ is constructed in our situation.

\begin{proposition}[\cite{broussousLemaire:02},~II.3.1]\label{propBLII31}
There is a pair $(\Delta,W)$ consisting of a skew-field $\Delta$ which is $\tens{E}{k}{D}$ or $E$ and a $\Delta$-subvector space $W$ of $V$ such that 
\be
\item $\End_{\tens{E}{k}{D}}(V)$ is $E$-algebra isomorphic to $\End_{\Delta}(W)$ via 
\beq\label{eqIsoCentr} 
a\mapsto a|_W,
\eeq
and 
\item the map $j_E$ from $\building_{red}(\tilde{G})^{E^\times}$ to $\building_{red}(\tilde{G}_E)$ in terms of lattice functions has the form 
\[j_E([\Lambda])=[\Lambda\cap W].\]
\ee

In more detail:
\bi 
\item[(a)] We have that $\Delta$ is $\tens{E}{k}{D}$ if $[E:k]$ is odd and $E$ otherwise. 
Secondly $V/W$ is $\Gcd{d}{[E:k]}$-dimensional over $\Delta$. 
\item[(b)] Let $L|k$ be a maximal unramified field extension in $D$. If $\Gcd{d}{[E:k]}=2$ and $f(E|k)$ is odd then $V$ is $\tens{E}{k}{D}$-isomorphic to $\tens{W}{k}{L}$ and 
$\Lambda\in\Lattone{o_D}{V}^{E^\times}$ is equal to $\tens{(\Lambda\cap W)}{o_k}{o_L}$.
\item[(c)] If $\Gcd{d}{f(E|k)}=2$ then $V$ is $\tens{E}{k}{D}$-isomorphic to $W\oplus W\pi_D$ where $\pi_D$ is a uniformizer of $D$ and 
$\Lambda\in\Lattone{o_D}{V}^{E^\times}$ is equal to 
\[(\Lambda\cap W)\oplus((\Lambda\cap W)+\frac12)\pi_D.\]
\ei
\end{proposition}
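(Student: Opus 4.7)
The plan is to combine Brauer theory, a Morita-type construction of $W$ via idempotents, and an explicit analysis of the interaction between the decomposition $D=L\oplus L\pi_D$ and the $E$-module structure of $V$, following the strategy of \cite[II.3.1]{broussousLemaire:02}.

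First I would identify $\Delta$ using Remark \ref{remCentralizer}(1): the centralizer of $E$ in $\End_D(V)$ is $E$-algebra isomorphic to $\End_\Delta(W')$ for some $W'$, with $\ind(\Delta|E)=d/\Gcd{[E:k]}{d}$. Since $d\leq 2$, two cases remain. If $[E:k]$ is odd and $d=2$, then $\Gcd{[E:k]}{d}=1$, so $\Delta$ has index $2$ over $E$ and must coincide with the skew-field $\tens{E}{k}{D}$. If $[E:k]$ is even (and $d=2$), then $E$ splits $D$, so $\tens{E}{k}{D}\cong \Matr_2(E)$ and $\Delta=E$. This yields (a). The codimension statement $\dim_\Delta(V/W)=\Gcd{d}{[E:k]}$ is then obtained by comparing $E$-dimensions on both sides of the isomorphism.

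To realise the restriction map $a\mapsto a|_W$, I would choose a primitive idempotent $\varepsilon\in\tens{E}{k}{D}$ and set $W:=V\varepsilon$. Then $\varepsilon(\tens{E}{k}{D})\varepsilon\cong\Delta$, and the Morita equivalence between $\tens{E}{k}{D}$ and $\Delta$ identifies $\End_{\tens{E}{k}{D}}(V)$ with $\End_\Delta(W)$ via restriction. For the lattice-function identity $j_E([\Lambda])=[\Lambda\cap W]$ I would invoke the uniqueness of $j_E$: it suffices to verify that $\Lambda\cap W$, viewed as an $o_\Delta$-lattice function of $W$, has square lattice function equal to $\mf{g}_\Lambda(t)\cap\End_{\tens{E}{k}{D}}(V)$, which by the defining property of $j_E$ is $\mf{g}_{j_E([\Lambda])}(t)$.

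For the detailed assertions (b) and (c) I would exploit $D=L\oplus L\pi_D$ with $\pi_D$ implementing the non-trivial Galois automorphism of $L|k$ by conjugation. In case (b), $f(E|k)$ odd forces $E\cap L=k$, so $\tens{E}{k}{L}$ is a field; a compatible $L$-action on $V$ commuting with the $E$-action identifies $V\cong\tens{W}{k}{L}$, and the identity $\Lambda=\tens{(\Lambda\cap W)}{o_k}{o_L}$ then follows from the $o_E$-stability of $\Lambda$ together with $o_D=o_L\oplus o_L\pi_D$. In case (c), $f(E|k)$ even gives an embedding $L\hookrightarrow E$, yielding two orthogonal idempotents $1^1,1^2\in\tens{E}{k}{L}$ swapped by $\pi_D$; setting $W:=V\cdot 1^1$ produces $V=W\oplus W\pi_D$, and the $\frac12$-shift in the lattice formula reflects $\nu(\pi_D)=\frac12$. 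The main obstacle is precisely this $\frac12$-shift in case (c): it requires careful bookkeeping of how the $o_D$-lattice function interacts with the idempotent decomposition, since the idempotents $1^i$ live only after extending scalars to $\tens{o_E}{o_k}{o_D}$ and their interchange by $\pi_D$ produces the half-integer translation. This is the technical heart of the Broussous--Lemaire construction and is also what forces the non-trivial shape of $j_E^1$ recorded in (\ref{eqjE1}).
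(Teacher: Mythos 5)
The statement is cited from \cite[II.3.1]{broussousLemaire:02}, and the paper does not reprove it from scratch; rather, the relevant constructions appear inside the proof of Proposition~\ref{propChoiceOfW}. Your plan follows essentially the same line: identify $\Delta$ by Brauer theory, realise $W$ via a Morita-type idempotent construction, and then verify the lattice-function formula, splitting into cases by the decomposition $D=L\oplus L\pi_D$. At that level your sketch matches the intended argument. That said, a few places need to be tightened or are slightly off.

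First, there is a latent circularity in your use of the uniqueness of $j_E$. You want to verify that $\mf{g}_{\Lambda\cap W}(t)=\mf{g}_\Lambda(t)\cap\End_{\tens{E}{k}{D}}(V)$, but both inclusions — in particular the surjectivity of ``$a$ preserves $(\Lambda\cap W)(s)\subseteq(\Lambda\cap W)(s+t)$ implies $a$ preserves $\Lambda(s)\subseteq\Lambda(s+t)$'' — need the concrete decompositions of $\Lambda$ recorded in (b) and (c). Those decompositions are exactly what remains to be established, so they must be proved first, not cited as consequences of the uniqueness.

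Second, in case (b) the phrase ``the identity $\Lambda=\tens{(\Lambda\cap W)}{o_k}{o_L}$ then follows from the $o_E$-stability of $\Lambda$ together with $o_D=o_L\oplus o_L\pi_D$'' hides the real content. Here $\tens{E}{k}{L}$ is an \emph{unramified quadratic} field extension of $E$ (not a product), so there are no idempotents available; you must show directly that $\Lambda(t)$, which is an $o_{\tens{E}{k}{L}}$-lattice, is spanned over $o_{\tens{E}{k}{L}}$ by $\Lambda(t)\cap W$. This is where one uses that $W\cap Wl'=\{0\}$ for a trace-zero unit $l'\in L$, that $o_{\tens{E}{k}{L}}=o_E\oplus o_El'$, and that the $\pi_D$-action is $p_E\otimes 1$. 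None of this is automatic and it is the substance of case~(b).

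Third, your remark that ``the idempotents $1^i$ live only after extending scalars to $\tens{o_E}{o_k}{o_D}$'' is inaccurate: since $p\neq 2$ the idempotents of $\tens{E}{k}{L}\cong E\times E$ are already integral, i.e.\ lie in $\tens{o_E}{o_k}{o_L}$. And the $\frac12$-shift in case (c), which you flag as the technical heart and leave open, is in fact the \emph{easiest} of the outstanding verifications: write $\Lambda(t)=\Lambda(t)1^1\oplus\Lambda(t)1^2$, note that multiplication by $\pi_D^{\pm 1}$ interchanges $V^1$ and $V^2$, and use the defining property $\Lambda(t)\pi_D=\Lambda(t+\tfrac12)$ to get $\Lambda(t)1^2=(\Lambda(t-\tfrac12)\cap W)\pi_D$, whence the displayed formula. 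Finally, your reading ``$\dim_\Delta(V/W)=\Gcd{d}{[E:k]}$'' is not what the statement can mean (in case (a) with $\Gcd{d}{[E:k]}=1$ we have $W=V$, so the codimension is $0$); the intended reading is that the $\Delta$-dimension of $V$ is $\Gcd{d}{[E:k]}$ times that of $W$, which is what the identifications $V\cong\tens{W}{k}{L}$ and $V\cong W\oplus W\pi_D$ actually deliver.
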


The point of finding $h_E$ is the right choice of $W$. The proof of Proposition \ref{propBLII31} is also included in the proof of:

\begin{proposition}\label{propChoiceOfW}
We can choose $W$ in Proposition (\ref{propBLII31}) in a way such that 
\bi
\item the image $\mf{M}$ of $h^{\id_E}|_{W\times W}$ is a one dimensional bi-$\Delta$-vector space, and 
\item there is a bi-$\Delta$-isomorphism $\psi$ from $\mf{M}$ to 
$\Delta$ such that $h_E:=\psi\circ h^{\id_E}|_{W\times W}$ satisfies the following.
\be 
\item The map (\ref{eqIsoCentr}) is $\sigma$-$\sigma_{h_E}$-equivariant. In particular 
$G_E$ is isomorphic to $\U(h_E)$ via (\ref{eqIsoCentr}). 
\item For an element $\Lambda$ of $\Lattone{o_D}{V}^{E^\times}$ we have:
\[\Lambda^{\#}\cap W=\left\{\begin{array}{rl}
                             (\Lambda\cap W)^{\#,h_E}+\frac12,&\text{if }\Gcd{d}{f(E|k)}=2\text{ and }\trace(\mf{M})=\{0\}  \\
			     (\Lambda\cap W)^{\#,h_E},&\text{otherwise}\\			     
                            \end{array}\right.
\]
In the first case $(\Lambda\cap W)-\frac14$ is self-dual if $\Lambda$ is.
\ee
\ei    
In particular in terms of self-dual lattice functions $j_E|_{\building(G)\cap\building_{red}(\tilde{G})^{E^\times}}$ has the form
\[\Lambda\mapsto \left\{\begin{array}{rl}
                             (\Lambda\cap W)-\frac14,&\text{if }\Gcd{d}{f(E|k)}=2\text{ and }\trace(\mf{M})=\{0\}  \\
			     (\Lambda\cap W),&\text{otherwise}\\			     
                            \end{array}\right.\]
\end{proposition}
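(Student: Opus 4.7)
The plan is to treat the three subcases (a), (b), (c) of Proposition \ref{propBLII31} separately, then verify the duality formula in each.

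First I would dispatch the case $[E:k]$ odd. Here $\Delta = \tens{E}{k}{D}$ is a skew-field, $V = W$, and choosing $\mf{M} = \Delta$, $\psi = \id_\Delta$, $h_E = h^{\id_E}$ immediately works: property (1) of Proposition \ref{propExtendHermForms} gives the $\sigma$-$\sigma_{h_E}$-equivariance of (\ref{eqIsoCentr}), and the duality identity $\Lambda^{\#,h} \cap W = (\Lambda \cap W)^{\#,h_E}$ is exactly Proposition \ref{prop5_5brousStevForDneqkSkewField}.

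For $[E:k]$ even we have $d = 2$ and $\Delta = E$, with $\tens{E}{k}{D}$ splitting as a product of two matrix factors. Using the idempotent decomposition of $\tens{E}{k}{L}$ (where $L \subset D$ is a maximal unramified field extension of $k$), I would construct $W$ as in the proof of Proposition \ref{propBLII31}: in subcase (b), $V \cong \tens{W}{k}{L}$; in subcase (c), $V \cong W \oplus W\pi_D$. In either case, the fact that distinct idempotents of $\tens{E}{k}{L}$ annihilate each other under the form forces the image $\mf{M}$ of $h^{\id_E}|_{W \times W}$ to lie in a single one-dimensional bi-$E$-submodule of $\tens{E}{k}{D}$. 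Choose a $\sigma$-invariant generator $m \in \mf{M}$ (possible since the residue characteristic is odd, so averaging is available) and define $\psi$ to send $m$ to an appropriately normalized element of $\Delta$. Then $h_E := \psi \circ h^{\id_E}|_{W \times W}$ is a signed Hermitian form on $W$ whose adjoint involution matches $\sigma|_{\End_{\tens{E}{k}{D}}(V)}$ under (\ref{eqIsoCentr}), giving part 1.

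The duality identity is the main technical step. Given $\Lambda \in \Lattone{o_D}{V}^{E^\times}$, decompose $\Lambda(t)$ according to $V = W \oplus W\pi_D$ or $V = \tens{W}{k}{L}$. The normalization of $\psi$ should be chosen so that for $x \in \mf{M}$,
\[
\tilde\lambda(x \cdot o_{\tens{E}{k}{D}}) \subseteq \mf{p}_D \iff \psi(x) \in \mf{p}_E.
\]
In case (b), and in case (c) when $\trace(\mf{M}) \neq \{0\}$, the same direct calculation as in the proof of Proposition \ref{prop5_5brousStevForDneqkSkewField} yields the unshifted formula $\Lambda^{\#,h} \cap W = (\Lambda \cap W)^{\#,h_E}$. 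When $\Gcd{d}{f(E|k)} = 2$ and $\trace(\mf{M}) = \{0\}$, however, a natural $o_E$-generator of $\mf{M}$ is of the form $\pi_D \cdot u$ for a unit $u$; the compatibility of $\psi$ with $\tilde\lambda$ then forces an additional shift by $\frac{1}{2}$ coming from $\nu(\pi_D) = \frac{1}{2}$, producing $\Lambda^{\#,h} \cap W = (\Lambda \cap W)^{\#,h_E} + \frac{1}{2}$. The final claim that $(\Lambda \cap W) - \frac{1}{4}$ is self-dual when $\Lambda = \Lambda^{\#}$ follows by symmetry: the shifted duality gives $(\Lambda \cap W)^{\#,h_E} = (\Lambda \cap W) - \frac{1}{2}$, and a shift of both sides by $\frac{1}{4}$ produces self-duality.

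The main obstacle is the case $\Gcd{d}{f(E|k)} = 2$ with $\trace(\mf{M}) = \{0\}$: one must show that the $\frac{1}{2}$-shift forced by the normalization of $\psi$ is exactly right, by carefully tracking the valuation structure of $o_{\tens{E}{k}{D}}$ (analogous to Lemma \ref{lemValONTensProd}, but now in the non-skew-field setting) together with how $\rho$ and $\sigma'$ act on $\pi_D$ and its $E$-translates.
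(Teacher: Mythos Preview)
Your outline is on the right track for Cases 1 and 3, but there is a genuine gap in your treatment of subcase (b), i.e.\ $\Gcd{d}{[E:k]}=2$ with $f(E|k)$ odd. You write that ``the fact that distinct idempotents of $\tens{E}{k}{L}$ annihilate each other under the form forces $\mf{M}$ to lie in a single one-dimensional bi-$E$-submodule''. But when $f(E|k)$ is odd and $[L:k]=2$ is unramified, the algebra $\tens{E}{k}{L}$ is a \emph{field}: a non-square unit of $k$ generating $L$ stays a non-square in $E$ because the residue extension has odd degree. There are no nontrivial idempotents to exploit, and the argument you sketch simply does not get started in this case.

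The paper handles subcase (b) by an entirely different device: since $2\mid e(E|k)$, one can (via Corollary~\ref{corRoots}) choose $p_E\in E$ and $\pi_D\in D$ with $p_E^2=\pi_D^2=\pi_k\in k$, and realize $\tens{E}{k}{D}\cong\Matr_2(E)$ explicitly by sending $\pi_D$ to $\diag(p_E,-p_E)$. The relations $A\,p_E=A\,\pi_D$ and $\rho(\pi_D)A=\sigma'(p_E)A$ for $A\in\mf{M}$ then pin down $\mf{M}$ as a specific line of matrices (upper-left or lower-left entries, according to the sign in $\rho(\pi_D)\sigma'(p_E)=\pm\pi_D p_E$), and $\psi$ is just the relevant matrix-entry map. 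The duality check in Step~2.2 uses the decomposition $\Matr_2(E)=E\oplus El'\oplus E\pi_D\oplus El'\pi_D$ and is not an instance of the idempotent calculus.

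A secondary point: even in Case~3 your explanation of the $\tfrac12$-shift is too loose. The mechanism in the paper is not that a generator of $\mf{M}$ is ``$\pi_D\cdot u$''; rather, in the antidiagonal situation \eqref{eqImageAntiDiag} the equivalence proved in Step~3.2a is $\psi(A)\in\mf{p}_E \Leftrightarrow \tilde\lambda(o_EA)\subseteq\mf{p}_D^{2}$ (note the square), and it is precisely this jump from $\mf{p}_D$ to $\mf{p}_D^2$ that produces the $\tfrac12$-translation when one unwinds $\Lambda^{\#}\cap W$ through the splitting $\Lambda(t)=\Theta_\Lambda(t)\oplus\Theta_\Lambda(t-\tfrac12)\pi_D$. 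Your normalization condition $\tilde\lambda(x\cdot o_{\tens{E}{k}{D}})\subseteq\mf{p}_D\Leftrightarrow\psi(x)\in\mf{p}_E$ is the right target only in the unshifted cases.
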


%****************************Seite 17************************************
\begin{proof}
\textbf{\underline{Case 1:}} ($\Gcd{d}{[E:k]}=1$) Then $W$ is $V$ and $\Delta$ is $D$ and we apply Proposition \ref{prop5_5brousStevForDneqkSkewField}.

\textbf{\underline{Case 2:}} ($\Gcd{d}{[E:k]}=2$ and $f(E|k)$ is odd.)  Here two divides $e(E|k)$ and we can establish the following situation.
There is a two-dimensional unramified and $\rho$-invariant field
 extension $L$ in $D,$ and an element $p_E$ of $E$ such that $p_E^2,$ which we denote by $\pi_k,$ is a prime
 element of $k$, see Corollary \ref{corRoots}. Further we find a square root $\pi_D$ of $\pi_k$ in $D$, which normalizes $L$, such that 
 $\rho(\pi_D)$ is either $+\pi_D$ of $-\pi_D$. We denote the non-trivial element of $\Gal(L|k)$ by $\tau.$
 Let $W$ be an arbitrary $\frac{\dim_E(V)}{2}$-dimensional $E$-vector space. 
 We define a right-$D$-action on $\tens{W}{k}{L}$ via
 \[(\tens{w}{k}{l}).\pi_D:=(\tens{p_E w}{k}{l}).\]
The space $\tens{W}{k}{L}$ is $\tens{E}{k}{D}$-module isomorphic to $V$ by the theory of semi-simple modules and we identify them.
 
\underline{Step 2.1:} The algebra $\tens{E}{k}{D}$ is $E$-algebra isomorphic to $\Matr_2(E)$ via
 \[\pi_D\mapsto \left(\begin{matrix}
                       p_E & 0\\
                       0 & -p_E\\
                      \end{matrix}\right), 
 l'\mapsto \left(\begin{matrix}
                       0 & l'^2\\
                       1 & 0\\
                      \end{matrix}\right), 
 \]
 where $l'$ is a unit in $L$ which satisfies $\tau(l')=-l'.$ We identify $\tens{E}{k}{D}$ with $\Matr_2(E)$.
 Let $\mf{M}$ be the image of $W\times W$ under $h^{\id_E}$ . For elements $A$ of $\mf{M}$ we have
 \bi
 \item $A p_E=A\pi_D$ and
 \item $\rho(\pi_D)A=\sigma`(p_E)A,$
 \ei
 especially 
 if 
 \beq\label{eqSymCase}
 \rho(\pi_D)\sigma`(p_E)=\pi_Dp_E
 \eeq
 we have
 \[\mf{M}=\left\{\left(\begin{matrix}
                        e & 0\\
                        0 & 0
                       \end{matrix}\right)\mid e\in E\right\},
 \]
 and
 if
 \beq\label{eqSkewCase}
 \rho(\pi_D)\sigma`(p_E)=-\pi_Dp_E
 \eeq
 we have
 \[\mf{M}=\left\{\left(\begin{matrix}
                        0 & 0\\
                        e & 0
                       \end{matrix}\right)\mid e\in E\right\}.
 \]
 We define now a signed hermitian form $h_E$ on $W$ to be $\psi\circ h^{\id_E}|_{W\times W}$. Here $\psi$ is $a_{11}|_\mf{M}$ for (\ref{eqSymCase}) and $a_{21}|_\mf{M}$ for 
(\ref{eqSkewCase}) where $a_{ij}$ is the map from $\Matr_2(E)$ to $E$ which maps a matrix to its entry on position $(i,j).$

%************************************Seite 18***************************************
 \underline{Step 2.2:} Here we prove 
\[(\Lambda\cap W)^{h_E}=\Lambda^{\#}\cap W,\]
i.e. we have to prove that $h_E(w,M)$ is a subset of $\mf{p}_E$ if and only if $h(w,\tens{M}{o_k}{o_L})$ is a subset of $\mf{p}_D$,
 for all $w\in W $ and all full $o_E$-lattices $M$ of $W.$ It follows from 
 \[\psi^{-1}(\mf{p}_E)=\mf{M}\cap\bigcap_{u\in o_E\setminus\{0\}}u^{-1}\tilde{\lambda}^{-1}(\mf{p}_D).\]
 To prove the last equation we use the decomposition
 \beq\label{eqETensorD}
\Matr_2(E)=E\oplus El'\oplus E\pi_D \oplus El'\pi_D.
\eeq For example for Case (\ref{eqSymCase}):
 Let $A$ be an element of $\mf{M}$ with coefficients $e_1,e_2,e_3,e_4$ in the above decomposition, then we have \[e_2=e_4=0,\ e_1-e_3p_E=0,\]
 and every such matrix is an element of $\mf{M}$. Thus  $\psi(A)\in \mf{p}_E$ if and only if 
 $e_1\in \mf{p}_E$,
 i.e. if and only if 
 \[\lambda(e_1o_E)\subseteq \mf{p}_k\text{ and } \lambda(e_3o_E)\subseteq o_k,\]
 i.e. if and only if
 \[\tilde{\lambda}(o_EA)\subseteq \mf{p}_D.\]
The Case (\ref{eqSkewCase}) is analogous with the equations 
\[e_1=e_3=0,\ e_2-e_4p_E=0.\]

\textbf{\underline{Case 3:}} ($\Gcd{d}{f(E|k)}=2$) We follow a similar strategy as in Case 2. We fix an unramified two-dimensional field-extension $L$ of $k$ in $D$ which is $\rho$-invariant and a prime element $\pi_D$ of $D$ which normalizes $L$, such that 
the square of $\pi_D$ is a prime element of $k$ denoted by $\pi_k$ and $\rho(\pi_D)$ is equal to $+\pi_D$ or $-\pi_D$. We can embed $L$ into $E$, because $2|f(E|k)$, and we identify $L$ with its image under the embedding.
As in Case 2 we identify $\tens{E}{k}{D}$ with $\Matr_2(E)$ but now via 
\[\pi_D\mapsto \left(\begin{matrix}
                      0 & \pi_k\\
                      1 & 0\\
                     \end{matrix}\right), 
l'\mapsto \left(\begin{matrix}
                      l' & 0\\
                      0 & -l'\\
                     \end{matrix}\right). 
\]

\underline{Step 3.1:} We consider the idempotents $1^1$ and $1^2$ in $\tens{E}{k}{L}$, i.e. $1^1$ is the matrix $E_{1,1}$ and $1^2$ is the matrix $E_{2,2}$ in the standard notation of linear algebra.
We define $V^i:=1^iV$ and $W:=V^1$. Recall that $V^2=V^1\pi_D.$
Let $\mf{M}$ be the image of $W\times W$ under $h^{\id_E}$. The equations
\bi
\item $A 1^1=A$ and
\item $(\tens{\sigma'}{k}{\rho})(1^1)A=A,$
\ei
for $A\in \mf{M}$ 
imply that 
\beq\label{eqImageDiag}
\mf{M}=\left\{\left(\begin{matrix}
                       e & 0\\
                       0 & 0
                      \end{matrix}\right)\mid e\in E\right\},
\eeq
or 
\beq\label{eqImageAntiDiag}
\mf{M}=\left\{\left(\begin{matrix}
                       0 & 0\\
                       e & 0
                      \end{matrix}\right)\mid e\in E\right\}.
\eeq
We take the same $\psi$ as in Case 2 to define $h_E.$

%****************************Seite 19********************************
\underline{Step 3.2:} Consider the lattice function $\Lambda$ under the decomposition $V=V^1\oplus V^2$. We define $(\Theta_\Lambda)(t)$ to be $V^1\cap \Lambda(t)$ for (\ref{eqImageDiag}) and $V^1\cap \Lambda(t+\frac14)$ for (\ref{eqImageAntiDiag}), i.e. $\Lambda(t)$ (resp. $\Lambda(t+\frac14)$) has the form 
\beq\label{eqBLdis2}\Theta_\Lambda(t)\oplus\Theta_\Lambda(t-\frac12)\pi_D.\eeq 
Here $\Theta_\Lambda$ is an element of $\Lattone{o_E}{W}$. 
We have to prove:
\[(\Theta_\Lambda)^{\#,h_E}=\Theta_{\Lambda^{\#}}.\]
\underline{Step 3.2a:} At first, we show the equivalence of the following two statements for an element $A$ of $\mf{M}.$
\be
\item $\psi(A)\in \mf{p}_E.$
\item $\tilde{\lambda}(o_E A)\subseteq \mf{p}_D^2.$
\ee
For this we look at decomposition (\ref{eqETensorD}) of $\tens{E}{k}{D}$. and we obtain for $A$ the relations
\be
\item[(\ref{eqImageDiag})] $e_3=e_4=0$ and $e_1=l'e_2.$
\item[(\ref{eqImageAntiDiag})] $e_1=e_2=0$ and $e_3=-l'e_4.$
\ee
From these relations the equivalences follow from (\ref{assLambda}).
\vspace{1em}\\
\underline{Step 3.2b:} We only consider (\ref{eqImageAntiDiag}). The other case is similar. We have to show 
\[\Theta_\Lambda^{\#,h_E}(t)=\Lambda^{\#}(t+\frac14)\cap W.\]
\vspace{1em}\\
$\supseteq:$ This follows directly from $2.\Ra 1.$. 
\vspace{1em}\\
$\subseteq:$ Let $w$ be an element of $\Theta_\Lambda^{\#,h_E}(t)$, i.e. $h(w,\Theta_\Lambda((-t)+))$ is a subset of $\mf{p}_D^2$, and 
 more precisely it is a subset of $\mf{p}_D^3$ since it is contained in $k\mf{p}_D.$
Thus
\[h(w,1^1\Lambda((-t+\frac14)+))=h(w,\Theta_\Lambda((-t)+))\subseteq \mf{p}_D^3,\]
i.e.
\[h(w,1^2\Lambda((-t-\frac14)+)\subseteq \mf{p}^2_D \text{ and }h(w,1^1\Lambda((-t-\frac34)+)\subseteq \mf{p}_D \]
and especially
\[h(w,\Lambda((-t-\frac14)+))\subseteq \mf{p}_D.\]
Thus $w$ is an element of $\Lambda^{\#}(t+\frac14)$.
\end{proof}

The last proposition answers how to find an element of $G$ which centralizes $E$. But sometimes we look for an element of the 
normalizer of $E$ in $G$ which is not in the centralizer. This is what the last part of the section is about. 

\textbf{For this assume that we are in Case 3 of the proof above, which means $\Gcd{f(E|k)}{d}=2$}. Define
$W':=W\pi_D$, 
\beq
h'_E:=\left\{\begin{array}{rl}
             a_{22}\circ h^{\id_E}|_{W'\times W'},&\text{ if (\ref{eqImageDiag})}\\
	     \pi_k^{-1}a_{12}\circ h^{\id_E}|_{W'\times W'},&\text{ if (\ref{eqImageAntiDiag})}\\
            \end{array}\right.
\eeq 
and
\beq
\Theta'_{\Lambda}(t):=\left\{\begin{array}{rl}
             \Lambda(t)\cap W',&\text{ if (\ref{eqImageDiag})}\\
	     \Lambda(t-\frac14)\cap W',&\text{ if (\ref{eqImageAntiDiag})}\\
            \end{array}\right.,
\eeq
for $\Lambda\in\Lattone{o_D}{V}^{E^\times}$.
Analogous to Proposition \ref{propChoiceOfW} we have the equality $\Theta'^{\#,h'_E}_\Lambda=\Theta'_{\Lambda^\#}$ and affine isomorphisms 
\[\building(\U(h_E))\cong\building(G)\cap\building_{red}(\tilde{G})^{E^\times}\cong\building(\U(h'_E)),\ \Theta_\Lambda\mapsto\Lambda\mapsto\Theta'_\Lambda.\]

%*****************************************Seite 20*********************************
\begin{corollary}\label{corNormalizer}
Suppose we are given two elements $\Lambda_1$ and $\Lambda_2$ of $\building(G)\cap\building_{red}(\tilde{G})^{E^\times}$ and
an isomorphism from $(W,h_E)$ to $(W',h'_E)$ which maps $\Theta_{\Lambda_1}$ to $\Theta'_{\Lambda_2}$. Then for every $\sigma''\in\Aut(E|k)$
 which 
\bi 
\item commutes with $\sigma'$ and 
\item which has nontrivial restriction to $E_D$ and 
\item for which $\lambda\circ\sigma''$ is $\lambda$
\ei
there is an element $g$ of $G$ such that $g\Lambda_1$ is equal to $\Lambda_2$ and $geg^{-1}$ is equal to $\sigma''(e)$ for all elements $e$ of $E$.
\end{corollary}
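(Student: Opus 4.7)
I would invoke the second half of Proposition~\ref{propExtendHermForms} in reverse: an element $g\in G$ with $geg^{-1}=\sigma''(e)$ for all $e\in E$ is the same thing as an isomorphism of signed Hermitian $\tens{E}{k}{D}$-modules from $(V,h^{\id_E})$ to $(V,h^{\sigma''})$, where $h^{\sigma''}$ is the form produced by Proposition~\ref{propExtendHermForms} from the $k$-algebra homomorphism $E\xrightarrow{\sigma''}E\hookrightarrow\End_D(V)$. The hypothesis that $\sigma''$ commutes with $\sigma'$ makes this homomorphism $\sigma'$-$\sigma$-equivariant, and the hypothesis $\lambda\circ\sigma''=\lambda$ yields $\tilde\lambda\circ(\sigma''\otimes\id_D)=\tilde\lambda$; combined with the uniqueness clause of Proposition~\ref{propExtendHermForms} these force
\[h^{\sigma''}(v,v')=(\sigma''\otimes\id_D)(h^{\id_E}(v,v')).\]
So the desired $g$ is precisely a $D$-linear, $\sigma''$-semilinear bijection $T\colon V\to V$ satisfying $h^{\id_E}(Tv_1,Tv_2)=(\sigma''^{-1}\otimes\id_D)(h^{\id_E}(v_1,v_2))$.

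Because $\sigma''|_L$ is the nontrivial Galois automorphism, $\sigma''\otimes\id_D$ swaps the idempotents $1^1,1^2\in\tens{E}{k}{L}$, so any such $T$ must swap $W=V\cdot 1^1$ and $W'=V\cdot 1^2$. I construct $T$ from $u$ by first producing a $\sigma''$-semilinear $E$-isometry $s\colon(W,h_E)\to(W,h_E)$---an additive bijection with $s(we)=s(w)\sigma''(e)$ and $h_E(sw_1,sw_2)=\sigma''^{-1}(h_E(w_1,w_2))$---chosen so that $s$ preserves the lattice function $\Theta_{\Lambda_1}$. Setting $T|_W:=u\circ s\colon W\to W'$ and extending to all of $V$ by $D$-linearity via $T(w\pi_D):=T(w)\pi_D$ for $w\in W$ produces a well-defined bijection that swaps $W$ and $W'$; the $\sigma''$-semilinearity propagates automatically to $W\pi_D$ because $E$ and $\pi_D$ commute in $\tens{E}{k}{D}$.

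For the isometry verification I would work in the matrix picture of Case~3 of Proposition~\ref{propChoiceOfW}, where $\tens{E}{k}{D}\cong\Matr_2(E)$. A short direct computation gives $(\sigma''\otimes\id_D)(\diag(e_1,e_2))=\diag(\sigma''(e_2),\sigma''(e_1))$, showing how this operator swaps the $1^1$- and $1^2$-blocks while applying $\sigma''$ to their entries. On $W\times W$ the values of $h^{\id_E}$ lie in $\mf{M}$, on $W'\times W'$ in its image under this swap, and $h^{\id_E}(W,W')=0$ because $1^1\cdot 1^2=0$. Using that $u$ is an isometry $(W,h_E)\to(W',h'_E)$ and that $s$ satisfies the stated identity, the required equality $h^{\id_E}(Tw_1,Tw_2)=(\sigma''^{-1}\otimes\id_D)(h^{\id_E}(w_1,w_2))$ on $W\times W$ drops out; the other three cases reduce to this one by $D$-linearity and $h^{\id_E}(v\pi_D,v'\pi_D)=(\sigma'\otimes\rho)(\pi_D)h^{\id_E}(v,v')\pi_D$. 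Lattice compatibility $T\Lambda_1=\Lambda_2$ then follows from $s(\Theta_{\Lambda_1})=\Theta_{\Lambda_1}$, $u(\Theta_{\Lambda_1})=\Theta'_{\Lambda_2}$, and the decomposition formulas from Proposition~\ref{propChoiceOfW} relating $\Lambda_i$ to $\Theta_{\Lambda_i}$ and $\Theta'_{\Lambda_i}$.

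The main obstacle is producing the $\sigma''$-semilinear isometry $s$ that simultaneously preserves $\Theta_{\Lambda_1}$. This amounts to showing that the $\sigma''^{-1}$-twist of the Hermitian space $(W,h_E)$ is isometric to $(W,h_E)$ via a lattice-preserving map. Under the hypotheses in force ($\sigma'|_E$ nontrivial, $\dim_E W$ even in Case~3, odd residue characteristic) one can diagonalise $h_E$ along a Witt decomposition splitting the lattice chain of $\Theta_{\Lambda_1}$ and then build $s$ summand by summand; the $\sigma''$-invariance of $\lambda$ is what ensures that the discriminant-type obstructions for matching $(W,h_E)$ with its twist vanish.
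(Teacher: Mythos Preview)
Your approach is essentially the paper's: produce a $\sigma''$-semilinear, lattice-preserving self-isometry (the paper does this on the $W'$ side via a Witt-splitting basis of $\Theta'_{\Lambda_2}$, you do it on the $W$ side via one of $\Theta_{\Lambda_1}$), compose with the given $u$, extend to $V$ by $T(w\pi_D):=T(w)\pi_D$, and verify $D$-linearity and the $h$-isometry. The framing through Proposition~\ref{propExtendHermForms} is a clean conceptual wrapper, but the construction and checks are the same as in the paper.

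One correction: your claim $h^{\id_E}(W,W')=0$ is false. With $W=V1^1$ and $W'=V1^2$ the idempotents act on opposite sides of $h^{\id_E}$, so $h^{\id_E}(W,W')=(\sigma'\!\otimes\!\rho)(1^1)\,h^{\id_E}(V,V)\,1^2$, which in case~(\ref{eqImageDiag}) lands in $E\cdot E_{12}$, not in $0$. Fortunately you do not need this: once $T$ is $D$-linear and $h(Tv,Tw)=h(v,w)$ holds for $v,w\in W$, sesquilinearity of $h$ over $D$ together with $V=W+W\pi_D$ already gives the isometry on all of $V$---which is exactly how the paper proceeds (it verifies $D$-linearity via the action of $l'$ and checks $h$ only on $W\times W$).
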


\begin{proof}
 Consider instead of $h'_E$ the form $\sigma''\circ h'_E$ and $W'$ with the structure 
\[e*v:=\sigma''^{-1}(e)v,\ e\in E,\ v\in W'.\]
If we look at Gram matrices we see immediately that both signed Hermitian spaces $(W',h'_E)$ and $(W',\sigma''\circ h'_E)$ are isomorphic. 
Take a splitting basis $(w'_i)_i$ of $\Theta'_{\Lambda_2}$ in $W'$ corresponding to a Witt decomposition of $W'$ with respect to $h'_E$. Then there is an isomorphism from $(W',h'_E)$ to $(W',\sigma''\circ h'_E)$ which maps $w'_i$ 
into $o_E^\times *w'_i$ for all $i$.  Thus this isomorphism fixes  $\Theta'_{\Lambda_2}$, i.e. if we use our assumption we get that there is an isomorphism $f$ from $(W,h_E)$ to $(W',\sigma''\circ h'_E)$ 
such that $\Theta_{\Lambda_1}$ is mapped to $\Theta'_{\Lambda_2}$. From the definition of $h_E$ and $h'_E$ made of $h^{\id_E}$ and the fact that 
$\lambda\circ\sigma''$ is equal to $\lambda$ we deduce 
\[h(f(v),f(w))=h(v,w),\]
for all $v,w\in W$. We extend $f$ via
\[f(v+w\pi_D):=f(v)+f(w)\pi_D,\ v,w\in W.\]
This $f$ is $D$-linear, because for $w,v\in W$ we have
\begin{eqnarray*}
f((v+w\pi_D) l')&=&f(l'v-l'(w\pi_D))\\
&=&f(l'v)-f(l'w)\pi_D\\
&=&l'*f(v)-(l'*f(w))\pi_D\\
&=&-l'f(v)+(l'f(w))\pi_D\\
&=&f(v)l'-f(w)l'\pi_D\\
&=&(f(v)+f(w)\pi_D)l'\\
&=&(f(v+w\pi_D))l'.\\
\end{eqnarray*}
\end{proof}

It is not clear in general that $(W,h_E)$ is isomorphic to $(W',h'_E)$. But:

\begin{proposition}\label{propIsomorphicCentralizerSpaces}
The signed Hermitian space $(W,h_E)$ is always isomorphic to $(W,\pi_k h'_E)$. 
There is an isomorphism from $(W,h_E)$ to $(W,h'_E)$, i.e. anisotropic parts are isomorphic, if:
\be
\item $\dim_EW$ is even and the anisotropic parts have not dimension 2 or
\item $\dim_EW$ is even and $\sigma'$ does not fix $E$ pointwise or
\item $\pi_k$ is a square in $E$.
\ee
\end{proposition}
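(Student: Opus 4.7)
The plan is to exploit that both $h_E$ and $h'_E$ arise as projections of the single sesquilinear form $h^{\id_E}$ on $V = W \oplus W\pi_D$, together with the $E$-linear identification $\Phi \colon W \to W' = W\pi_D$ given by $\Phi(w) = w\pi_D$, which is well-defined because $E$ commutes with $\pi_D$ inside $\tens{E}{k}{D}$. For the first (always) claim, the first step is to compute the pullback $\Phi^* h'_E$: using $h^{\id_E}(v\pi_D, w\pi_D) = \rho(\pi_D)\,h^{\id_E}(v,w)\,\pi_D$ with $\rho(\pi_D) = \pm\pi_D$, $\pi_D^2 = \pi_k$, and the matrix identity $\pi_D\bigl(\begin{smallmatrix} a & 0 \\ 0 & 0\end{smallmatrix}\bigr)\pi_D = \bigl(\begin{smallmatrix} 0 & 0 \\ 0 & a\pi_k \end{smallmatrix}\bigr)$ for case (\ref{eqImageDiag}) and the analogous computation in (\ref{eqImageAntiDiag}), one obtains $\Phi^* h'_E = \pm\pi_k h_E$. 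Consequently $(W, \pi_k h'_E) \cong (W, \pm\pi_k^2 h_E)$ via $\Phi$, and the residual $\pi_k^2$ is absorbed as a norm: choosing a uniformizer with $\sigma'(\pi_k) = \pi_k$ (so $\pi_k \in E_0$), the scalar map $w \mapsto \pi_k^{-1}w$ transforms $h_E$ by $\sigma'(\pi_k^{-1})\pi_k^{-1} = \pi_k^{-2}$, yielding the desired isomorphism up to sign. The remaining sign is absorbed by an $E$-linear automorphism of $W$ coming from a suitable unit (e.g.\ the element $l' \in L$ with $\tau(l') = -l'$).

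For the second claim, by the first part it suffices to decide when $(W,h'_E) \cong (W,\pi_k h'_E)$, i.e.\ when scaling by $\pi_k$ preserves the isometry class of $h'_E$. The argument is case-by-case, invoking the classification of signed Hermitian forms over the local field $E$:
\begin{itemize}
\item Case (3): writing $\pi_k = a^2$ with $a \in E$, the map $w \mapsto aw$ transports $h'_E$ to $\sigma'(a)a\, h'_E = a^2 h'_E = \pi_k h'_E$, after choosing $a$ with $\sigma'(a) = a$ if $\sigma'$ is non-trivial.
\item Case (2), $\sigma'$ non-trivial and $\dim_E W$ even: the $\epsilon$-Hermitian class over $E|E_0$ is determined by rank plus a discriminant in $E_0^\times / N_{E|E_0}(E^\times)$; scaling by $\pi_k$ multiplies this discriminant by $\pi_k^{\dim_E W}$, which lies in the norm subgroup since the exponent is even and $\pi_k^2 = N_{E|E_0}(\pi_k)$.
\item Case (1), $\dim_E W$ even with anisotropic part of dimension $\neq 2$: then the anisotropic part has dimension $0$ or $4$; hyperbolic planes absorb the scalar $\pi_k$, and the unique anisotropic $4$-dimensional quadratic form over a non-dyadic local field is similar to each of its nonzero scalar multiples.
\end{itemize}

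The main obstacle is the first claim, specifically tracking the sign in $\Phi^* h'_E = \pm\pi_k h_E$ and then realizing the residual $\pi_k^2$ as a norm from $E$. The sign depends on both the choice $\rho(\pi_D) = \pm\pi_D$ and on which of the cases (\ref{eqImageDiag}) or (\ref{eqImageAntiDiag}) applies, and the norm absorption presupposes $\pi_k \in E_0$, which is automatic for involutions of the first kind on $k$ but for second-kind involutions requires a careful choice of uniformizer. Matching all signs uniformly and exhibiting the rescaling as an explicit $E$-isometry of $W$ is where the argument is most delicate; the excluded case of anisotropic dimension $2$ in the second claim is precisely where scaling by a uniformizer can genuinely change the discriminant and hence the isometry class.
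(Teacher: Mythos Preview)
Your approach is essentially the paper's own: both arguments use the $E$-linear map $W\to W'=W\pi_D$ given by right multiplication by a power of $\pi_D$ and compute how $h^{\id_E}$ transforms. The paper takes $f(w)=w\pi_D^{-1}$ and obtains $\pi_k h'_E(f(v),f(w))=h_E(v,w)$ directly (with a global sign when $\rho(\pi_D)=-\pi_D$), whereas your choice $\Phi(w)=w\pi_D$ produces $\Phi^*h'_E=\pm\pi_k h_E$ and then an extra, harmless absorption of $\pi_k^2$. The case analysis for the second assertion is also the same: the paper phrases cases 1 and 2 as ``the dimension of the anisotropic part determines the isomorphism class,'' which is exactly your discriminant/uniqueness argument, and treats case 3 by showing $\pi_k$ is a norm from $E$ to $E_0$.

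One small point to tighten: your proposed sign absorption via $l'\in L$ with $\tau(l')=-l'$ does not do what you want. Multiplication by $l'$ rescales $h_E$ by $\sigma'(l')\,l'$, and $\tau(l')=-l'$ says nothing about this product. The paper instead invokes that $-1$ is a square in $E_D\subseteq E$: writing $-1=i^2$, the map $w\mapsto iw$ rescales by $\sigma'(i)i$, which equals $-1$ whenever $\sigma'(i)=i$ (and in the remaining Hermitian situations one falls back on the discriminant argument, since scaling by $-1$ leaves the discriminant unchanged in even rank). Likewise, in your case (3) you assume one can choose $a$ with $\sigma'(a)=a$; the paper's formulation (``$\pi_k$ is a square $\Rightarrow$ $\pi_k$ is a norm for $\Nm_{E|E_0}$, since $-1$ is a square and the residue characteristic is odd'') sidesteps that choice. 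These are cosmetic fixes; the strategy is the same.
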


\begin{proof} 
Firstly assume that $\rho(\pi_D)$ is $\pi_D$. 
We define $f$ from $W$ to $W'$ via 
\[f(w):=w\pi_D^{-1},\ w\in W.\]
Then we have for (\ref{eqImageAntiDiag})
\begin{eqnarray*}
 \pi_k h'_E(f(v),f(w)) &=&  a_{21}(h^{\id_E}(f(v),f(w)))\\
&=& a_{21}(\rho(\pi_D^{-1}) h^{\id_E}(v,w)\pi_D^{-1})\\
&=& a_{21}(\pi_D^{-1} h^{\id_E}(v,w)\pi_D^{-1})\\
&=& a_{12}(h^{\id_E}(v,w))\\
&=& h_E(v,w)\\
\end{eqnarray*}
and for (\ref{eqImageDiag})
\begin{eqnarray*}
 \pi_k h'_E(f(v),f(w)) &=&  \pi_k a_{22}(h^{\id_E}(f(v),f(w)))\\
&=& a_{22}(\pi_D h^{\id_E}(v,w)\pi_D^{-1})\\
&=& a_{11}(h^{\id_E}(v,w))\\
&=& h_E(v,w).\\
\end{eqnarray*}
If $\rho(\pi_D)$ is $-\pi_D$ we get that $h_E$ is isomorphic to $-\pi_k h'_E$. The latter is isomorphic to $\pi_k h'_E$ because $-1$ is a square 
in $E_D|k$. 
Thus we have that an anisotropic part of $h_E$ has the same $E$-dimension as one of $h'_E$. Recall that such a dimension is not greater than $4$. 
If $\pi_k$ is a square in $E$ then it is also a norm with respect to $\Nm_{E|E_0}$ because $-1$ is a square and the residue characteristic is odd. Thus $\pi_k h'_E$ is isomorphic to $h'_E$. The other cases are made the way such that the dimension of the anisotropic part determines its isomorphism class. 
\end{proof}

\end{appendix}

\bibliographystyle{alpha}
%\bibliography{/home/zhoudani/EigeneDateien/ArbeitStudiumForschung/LaTeX/bib/bibliography}
\bibliography{../../LaTeX/bib/bibliography}

\begin{thebibliography}{Lan94}

\bibitem[BG00]{broussousGrabitz:00}
P.~Broussous and M.~Grabitz.
\newblock Pure elements and intertwining classes of simple strata in local
  central simple algebras.
\newblock {\em Comm. Algebra}, 28(11):5405--5442, 2000.

\bibitem[BL02]{broussousLemaire:02}
P.~Broussous and B.~Lemaire.
\newblock Building of {${\rm GL}(m,D)$} and centralizers.
\newblock {\em Transform. Groups}, 7(1):15--50, 2002.

\bibitem[BS09]{broussousStevens:09}
P.~Broussous and S.~Stevens.
\newblock Buildings of classical groups and centralizers of {L}ie algebra
  elements.
\newblock {\em J. Lie Theory}, 19(1):55--78, 2009.

\bibitem[BT84]{bruhatTitsIII:84}
F.~Bruhat and J.~Tits.
\newblock Sch\'emas en groupes et immeubles des groupes classiques sur un corps
  local.
\newblock {\em Bull. Soc. Math. France}, 112(2):259--301, 1984.

\bibitem[BT87]{bruhatTitsIV:87}
F.~Bruhat and J.~Tits.
\newblock Sch\'emas en groupes et immeubles des groupes classiques sur un corps
  local. {II}. {G}roupes unitaires.
\newblock {\em Bull. Soc. Math. France}, 115(2):141--195, 1987.

\bibitem[Gra99]{grabitz:99}
M.~Grabitz.
\newblock Continuation of hereditary orders in local central simple algebras.
\newblock {\em Journal of number theory}, 77:1--26, 1999.

\bibitem[Lan94]{lang:94}
Serge Lang.
\newblock {\em Algebraic number theory}, volume 110 of {\em Graduate Texts in
  Mathematics}.
\newblock Springer-Verlag, New York, second edition, 1994.

\bibitem[Rei03]{reiner:03}
I.~Reiner.
\newblock {\em Maximal {Orders}}.
\newblock Oxford University Press, 2003.

\bibitem[Sko10]{skodlerack:10}
Daniel Skodlerack.
\newblock {\em Embedding types and canonical affine maps between Bruhat-Tits
  buildings of classical groups}.
\newblock PhD thesis, 2010.

\bibitem[Ste08]{stevens:08}
Shaun Stevens.
\newblock The supercuspidal representations of p-adic classical groups.
\newblock {\em Inventiones Mathematicae}, 172:289--352, 2008.

\bibitem[Tit79]{corvallisTits:79}
J.~Tits.
\newblock Reductive groups over local fields.
\newblock {\em Proc. of Symp. in Pure Math.}, 33:29--69, 1979.

\end{thebibliography}

\end{document}